\documentclass[a4paper,twoside,12pt,reqno]{amsart}     
\usepackage{latexsym}
\usepackage{amssymb,amsfonts,amsmath,stmaryrd,bbm}
\usepackage{gensymb}

\usepackage[T1]{fontenc} 

\usepackage{comment}

\textwidth15.6cm 
\textheight23cm 
\hoffset-1.7cm  
\voffset-.5cm


\numberwithin{equation}{section}

\newtheorem{theorem}{Theorem}  
\newtheorem{proposition}[theorem]{Proposition}
\newtheorem{lemma}[theorem]{Lemma}
\newtheorem{corollary}[theorem]{Corollary}

\theoremstyle{definition}
\newtheorem{definition}[theorem]{Definition}

\theoremstyle{remark}
\newtheorem*{remark}{Remark}
\newtheorem*{remarks}{Remarks}

\newtheorem{example}{Example}

\def\al{\alpha}
\def\be{\beta}
\def\de{\delta}

\def\om{\omega}

\def\rh{\rho}

\def\ta{\tau}

\def\Na{\bigtriangledown}

\def\N{{\mathbb N}}
\def\Q{{\mathbb Q}}

\def\Fix{\operatorname{Fix}}

\def\fl#1{\left\lfloor#1\right\rfloor}
\def\cl#1{\left\lceil#1\right\rceil}

\def\Na{\bigtriangledown}
\def\nequiv{\hbox{${}\not\equiv{}$}}
\def\Cat{\operatorname{Cat}}

%
\usepackage{color,stmaryrd,graphicx}
\usepackage{colortbl,soul}
\usepackage[plainpages=false,pdfpagelabels,colorlinks=true,citecolor=blue,hypertexnames=false]{hyperref}

\newcommand{\beq}{\begin{equation}}
\newcommand{\eeq}{\end{equation}}

  \newcommand{\qbinom}{\genfrac{[}{]}{0pt}{}}
\DeclareMathOperator{\id}{id}

\def\emm#1,{{\em #1}}
\def\bn{\mathbf n}

  \DeclareMathOperator{\vv}{v}
  \DeclareMathOperator{\ff}{f}
  \DeclareMathOperator{\ee}{e}

  \DeclareMathOperator{\T}{T}
  \DeclareMathOperator{\TM}{TM}
  \DeclareMathOperator{\BT}{BT}
  \DeclareMathOperator{\NCM}{NCM}

  \newcommand{\e}{\mathsf e}
  \newcommand{\m}{\mathfrak m}

  \def\vareps{\varepsilon}


\catcode`\@=11
\def\section{\@startsection{section}{1}%
  \z@{.7\linespacing\@plus\linespacing}{.5\linespacing}%
  {\normalfont\large\scshape\bfseries\centering}}

\def\subsection{\@startsection{subsection}{2}%
  \z@{.5\linespacing\@plus\linespacing}{.5\linespacing}%
  {\normalfont\bfseries\scshape}}

\def\subsubsection{\@startsection{subsubsection}{3}%
  \z@{.5\linespacing\@plus\linespacing}{-.5em}
  {\normalfont\bfseries}}
\catcode`\@=12


\begin{document}
\title[Cyclic sieving phenomena for trees and tree-rooted maps]
      {Cyclic sieving phenomena for\\ rooted plane trees and
        tree-rooted planar maps}
\author[M. Bousquet-M\'elou]{M. Bousquet-M\'elou}

\address{CNRS, LaBRI, Universit\'e de Bordeaux, 351, cours de la
  Lib\'eration, F-33405 Talence Cedex, France.
WWW: {\tt https://www.labri.fr/perso/bousquet/}.}

\author[C. Krattenthaler]{C. Krattenthaler}

\address{Fakult\"at f\"ur Mathematik, Universit\"at Wien,
Oskar-Morgenstern-Platz~1, A-1090 Vienna, Austria.
WWW: {\tt http://www.mat.univie.ac.at/\lower0.5ex\hbox{\~{}}kratt}.}

\thanks{The first author was partially supported by the ANR project
  CartesEtPlus
(ANR-23-CE48-0018).
The second author was partially supported by the Austrian
Science Foundation FWF, grant 10.55776/F1002, in the framework
of the Special Research Program ``Discrete Random Structures:
Enumeration and Scaling Limits''} 

\subjclass[2020]{Primary 05A15; Secondary 05A10}
 
\keywords{Rooted plane trees, ordered trees, tree-rooted planar maps,
non-crossing partitions, non-crossing perfect matchings, cyclic sieving
phenomenon, Catalan numbers, Narayana numbers}

\begin{abstract}
We prove cyclic sieving phenomena satisfied by corner-rooted plane trees
(alias ordered trees). The sets of rooted plane trees
that we consider are: (1) all trees with $n$~nodes;
(2) all trees with $n$~nodes and $k$~leaves;
(3) all trees with a given degree distribution of the nodes.
Moreover, we consider four different cyclic group actions:
(1) the root is moved to the next corner along a tour of the tree;
(2) only trees in which the root is at a leaf
are considered, and the action moves the root to the next
leaf; (3) only trees in which the root is at a non-leaf
are considered, and the action moves the root to the next 
non-leaf corner;
(4) only trees in which the root is at a node of degree~$\de$ are
considered, for a fixed $\de$, and the action moves the root to the next
corner of this type.

We prove a cyclic sieving phenomenon for each meaningful
combination of these sets and actions.
As a bonus,
we also establish corresponding cyclic sieving phenomena
for tree-rooted planar maps.
\end{abstract}
\maketitle

\section{Introduction}
\label{sec:intro}

The {\it cyclic sieving phenomenon} was introduced by Reiner, Stanton
and White~\cite{ReSWAA} as the property of a set of combinatorial
objects on which a cyclic group of order~$m$ acts that the number of
objects that are invariant under a given power of a generator of the
group is given by the evaluation of a fixed univariate polynomial at
the corresponding power of a primitive $m$-th root of unity. (See
Section~\ref{sec:siev} for the precise definition.) Reiner, Stanton
and White already presented numerous instances of the cyclic sieving
phenomenon in their original article~\cite{ReSWAA}. Since
then, the discovery of cyclic sieving phenomena has almost become an
industry, with numerous more having been discovered and proved since
the appearance of~\cite{ReSWAA}. A survey of cyclic sieving phenomena
published until 2011 can be found in~\cite{SagaCS}. It goes without
saying that even more have been identified since then.

Inspired by these overwhelming developments, the second author, during
his talk at the closing conference of the ANR project Cortipom in
Le Croisic in June 2025 in which he presented further cyclic sieving
phenomena~\cite{KrStAA},
put forward a new principle of combinatorial mathematics:

\bigskip
\vbox{
\centerline{\it Every family of combinatorial objects satisfies}
\centerline{\it the cyclic sieving phenomenon!}}
\bigskip

If this is so, said the first author who was in the audience,
then surely rooted plane trees satisfy the cyclic sieving
phenomenon. The particular action of a cyclic group that she
thought of was to move the root of the tree to
the next \emm corner,
(see Figure~\ref{fig:example} for an example and Section~\ref{sec:rotations} for precise definitions).
Being a fan of maps, she had in mind a possible application to cyclic sieving for tree-rooted maps.
As for evidence, she had already
done some computations for small examples. 

Surprisingly, a first literature search led to nothing, which let
us believe that no investigations had
been undertaken in this direction. As explained  in Section~\ref{sec:connections}, the suspected  cyclic sieving result for trees \emm did, already exist in fact, but only in disguise, stated in terms of different objects\footnote{We also found more recently a preprint where it \emm is, stated in terms of trees~\cite{heitsch},
  albeit with a different proof than in our article.}.

Indeed, the cyclic sieving phenomenon for rooted plane trees
is not very difficult to prove, see Theorem~\ref{thm:ord} in
Section~\ref{sec:ordinary}.
In fact, there exists even a refinement in which one fixes the number
of leaves, see Theorem~\ref{thm:ord_leaves}, which is already more demanding to
prove. In particular, to find the cyclic sieving polynomial was not
obvious since the ``$q$-ification'' of the factor~2 in the counting
formula~\eqref{eq:T(n,k)} is not canonical (however, we realised later that this result too did already exist in a different language,
cf.\ Subsection~\ref{sec:matchings}).

From here on, we became more adventurous (or one may also say:
``greedy''). What if we modify the action so that the only allowed
roots are leaves, and the group action moves the root from one leaf to
the next? Or what happens if we do the analogous construction
with non-leaves? What if we refine further and consider rooted plane trees
in which the complete degree distribution of the nodes is prescribed?
And finally, how about taking one step further and considering tree-rooted maps, as was the original motivation of the first author? 

To give it away: the above principle did not let us down.
In all instances and all (meaningful) combinations of the ingredients
(objects and actions) we identified cyclic sieving phenomena,
which we state and prove in this paper.

\medskip
\noindent{\bf Outline of the paper.}
After a review of the relevant tree counting formulae in the next
section, and a reminder of the definition of the cyclic sieving
phenomenon in Section~\ref{sec:siev},
in Section~\ref{sec:rotations} we establish some preliminary results on the structure of trees having a rotational symmetry. In Section~\ref{sec:ordinary} we
present the aforementioned cyclic sieving results for the rotation
action that the first author had originally in mind, see
Theorems~\ref{thm:ord} and~\ref{thm:ord_leaves}.
Section~\ref{sec:external} is then devoted to the action of ``external
rotation'', by which we mean that the root is moved from one leaf to
the next. The main result in that section is Theorem~\ref{thm:ext}.
The action that we consider in Section~\ref{sec:internal} is what we call
``internal rotation'': here the root is moved from a non-leaf corner to the
next. The corresponding cyclic sieving result is presented in
Theorem~\ref{thm:int}.

The subsequent three sections concern cyclic sieving phenomena
for trees in which the degrees of the nodes are prescribed.
Section~\ref{sec:ord_deg} presents a corresponding cyclic sieving phenomenon
for the original rotation, see Theorem~\ref{thm:ord_deg}.
In Section~\ref{sec:delta} we investigate a generalisation of the external rotation that we call ``$\de$-rotation''. Here, the trees that we consider
  have a  root of degree~$\de$, and
$\de$-rotation moves the root to the next corner of degree~$\de$. The
corresponding cyclic sieving phenomenon, with prescribed vertex degrees, is presented in Theorem~\ref{thm:delta}.
Finally in Section~\ref{sec:int_deg} the action is that of internal rotation, and the corresponding cyclic sieving result is the subject of  Theorem~\ref{thm:int_deg}.

Each of the Sections~\ref{sec:ordinary}--\ref{sec:int_deg} is built according to the same scheme:
the cyclic sieving result of that
particular section is presented and proved in a theorem. The proof of
polynomiality and non-negativity of the coefficients of the cyclic
sieving polynomial is usually delayed to the first lemma following
the theorem. The evaluations of the cyclic sieving polynomial at
the relevant roots of unity are
done separately in the second lemma
following the theorem. This restricts the proofs of the theorems to the enumeration of trees fixed by a prescribed rotation. Each of the three aspects of our proofs --- enumeration, polynomiality, evaluation --- always follows the same pattern, independently of the class of trees and of the rotation under consideration. We naturally provide more details in the early proofs than in the late ones.

In the
subsequent section, Section~\ref{sec:connections}, we take a temporary break from cyclic sieving proofs, and describe connections between some of our results and earlier ones, stated in terms of non-crossing matchings, non-crossing partitions, or dissections of a polygon. For instance, 
our first result, namely Theorem~\ref{thm:ord}, and its refinement recording the number of leaves (Theorem~\ref{thm:ord_leaves})
have actually appeared earlier in terms of
non-crossing perfect matchings. Conversely, we explain how some of our results translate in these alternative languages.
At the end of that section (see Subsection~\ref{sec:part}),
we explain how
the cyclic sieving phenomena for non-crossing matchings and non-crossing
partitions can be embedded in a much more general cyclic sieving theorem
for generalised non-crossing partitions associated with reflection groups.

Finally,
in Section~\ref{sec:maps}, we return to cyclic sieving proofs,
this time for tree-rooted planar maps; see
Theorems~\ref{thm:TMij}, \ref{thm:TMn}, and~\ref{thm:TMd}.
Tree-rooted maps can indeed be
seen as amalgams of (slightly generalised) rooted plane trees and non-crossing perfect matchings.
As we show, cyclic sieving phenomena result from this structure.

\section{Tree enumeration}
\label{sec:enum}

We consider {\it rooted plane trees}
(also called {\it ordered trees}) with $n$ edges, and thus $n+1$ vertices (or \emm nodes,); see Figure~\ref{fig:rooted-tree}.
The precise meaning of ``rooted'' is that there is a {\it pointer} to
one of the {\it corners} of the tree, as is commonly done for rooted
(combinatorial) maps (see e.g.\ \cite{CaChAA}).
It is {\it not\/} sufficient to just mark a node
of a tree. The node incident to the root corner is called the \emm root node,. The edge that follows the root corner, in counterclockwise order around the root node, is called the \emm root edge,. 
  The \emm degree, of a node is its graph-theoretic degree. A node of degree~$1$ is called a \emm leaf,. The degree of a corner is the degree of the incident node. All trees in this paper are plane rooted trees, and they will often be called just ``trees'' for short.

\begin{figure}[htb]
  \centering
  \includegraphics{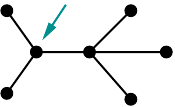}
  \caption{A rooted tree with $n=6$ edges, hence $7$ vertices and $12$ corners.}
  \label{fig:rooted-tree}
\end{figure}
 
We denote the set of all rooted plane trees with $n$~edges by~$\T(n)$. Such trees have $n+1$~nodes and $2n$~corners.
It is well-known (see e.g.\ \cite[Thm.~1.5.1(iii)]{StanBZ})
that the cardinality of~$\T(n)$ is given by the $n$-th {\it
  Catalan number}~$C_n$; more precisely, we have
\begin{equation} \label{eq:T(n)} 
|\T(n)|=C_{n} :=\frac {1} {n+1}\binom {2n}{n}.
\end{equation}

\medskip
Next we consider subsets of $\T(n)$ in which the number of leaves
is specified. We begin with
the set of all rooted plane trees with $n$~edges
and $k$~leaves, where {\it the root is one of the leaves}.
We denote the set of all these trees by~$\T_l(n,k)$.
(Here, the index ``l'' stands for ``leaf-rooted''.)

By deleting the root leaf and the edge emanating from it, and
moving the root corner to the other endpoint of this edge,
we obtain the set of rooted plane trees
with $n-1$~edges (thus $n$~nodes) and $k-1$~non-root leaves. 
Hence, by~\cite[Ex.~III.13]{flajolet-sedgewick},
the cardinality of
$\T_l(n,k)$ is given by a {\it Narayana number};
more precisely, we have
\begin{equation} \label{eq:Tl(n,k)} 
|\T_l(n,k)|=\frac 1 n \binom{n-2}{k-2}\binom{n}{k-1}=\frac {1} {n-1}\binom {n-1}{k-2}\binom {n-1}{k-1}.
\end{equation}

\medskip
A further subset of $\T(n)$ which we consider in our article
is the set of all rooted plane trees with $n$~edges
and $k$~leaves, where {\it the root is one of the non-leaves}.
We denote the set of all these trees by~$\T_i(n,k)$.
(Here, the index ``i'' stands for
``internally-rooted''.)

In order to determine the number of these trees, we recall that a rooted tree with $n$~edges (and $k$~leaves) has $2n$~corners. Exactly $k$ of
the corners are located at a leaf, and the other $2n-k$ at a non-leaf. A double rooting argument, where we count
trees rooted at a non-leaf \emm and, having a marked leaf
in two different ways,
yields
  \[
 k \,  |\T_i(n,k)|= (2n-k) \,    |\T_l(n,k)|,
  \]
  so that
  \beq \label{eq:Ti(n,k)}
    |\T_i(n,k)|= \frac{2n-k}{k} \,    |\T_l(n,k)|
    =\frac {2n-k} {n(n-1)}
\binom {n-1}{k-2}\binom {n}{k}.
  \eeq

A similar argument gives the number of rooted plane trees with $n$~edges and $k$~leaves without any
  restriction on the root degree. We denote this subset of~$\T(n)$ by $\T(n,k)$. Indeed, by marking
one of the $k$~leaves in a tree of $\T(n,k)$, and
by re-rooting the tree at this leaf, we obtain
  \beq\label{eq:T(n,k)} 
    |\T(n,k)|= \frac{2n}{k}\, |\T_l(n,k)|= \frac {2} {n-1}
\binom {n-1}{k-2}\binom {n}k.
  \eeq

\medskip
We now turn to families of trees with prescribed degree distribution. Let $\bn=(n_1, n_2, \ldots)$ be a sequence
of non-negative integers having finitely many non-zero entries. Let $\T(\bn)$ be the set of rooted trees having exactly $n_i$ nodes of degree~$i$, for $i\ge 1$. Such trees have $\sum_i n_i$ nodes, hence $n:= -1+ \sum_i n_i$ edges.
On the other hand, the edge number is also
$\sum_i in_i/2$, which means that such trees can only exist if
\beq\label{tree-cond}
\sum_{i\ge 1} (i-2)n_i =-2.
\eeq
Note also that $n_i$ necessarily vanishes for $i> n$. As above, we can rely on classical results to count leaf-rooted trees of degree distribution~$\bn$, and then count other types of trees by a double rooting argument. Indeed,
it was proved by Tutte in 1964~\cite[Eq.~(8)]{TutteTrees} that the number of leaf-rooted trees with degree distribution~$\bn$ is
\beq\label{eq:Tl(n,d)}
  |\T_l(\bn)|= n_1 \frac{(n-1)!}{\prod_i n_i!}= \frac 1 {n} \binom{n}{ n_1-1, n_2, n_3, \ldots}. 
\eeq
From this, we derive the number of trees with degree distribution~$\bn$ rooted at a non-leaf, as we did in~\eqref{eq:Ti(n,k)}.
Namely, we have
\beq\label{eq:Ti(n,d)}
  |\T_i(\bn)| = \frac{2n-n_1}{n_1} \ |\T_l(\bn)|=
  (2n-n_1) \frac{(n-1)!}{\prod_i n_i!}= \frac{2n-n_1}{(n+1)n}\binom{n+1}{n_1, n_2, \ldots}.
\eeq
Analogously, the total number of rooted trees with degree distribution $\bn$ is then
\beq\label{eq:T(n,d)} 
  |\T(\bn)| = \frac{2n}{n_1} \ |\T_l(\bn)|=
  2\,\frac{n!}{\prod_i n_i!}
  = \frac 2 {n+1} \binom{n+1}{n_1, n_2, \ldots}.
\eeq

Finally, we will also consider rooted trees in which the root node has a  prescribed degree, say~$\delta$.
We denote the set of such trees having degree distribution $\bn$ by $\T_\de(\bn)$.
In these trees, the number of corners that are incident to a node of degree~$\delta$ is $\delta n_\delta$. Hence the number of such trees is
\beq\label{eq:Tdelta(n,d)} 
  |\T_\delta(\bn)| = \frac{\delta n_\delta}{n_1}\ | \T_l(\bn)|=\delta n_\delta\, \frac{(n-1)!}{\prod_i n_i!}= \frac{\delta}{n} \binom{n}{n_1, \ldots, n_{\delta-1}, n_\delta-1, n_{\delta+1}, \ldots},
\eeq
generalising~\eqref{eq:Tl(n,d)}, which is the case $\de=1$.

\section{The cyclic sieving phenomenon}
\label{sec:siev}
Here we provide the definition of the {\it cyclic sieving phenomenon}.
There are actually three equivalent ways to define this
phenomenon (cf.\ \cite[Prop.~2.1]{ReSWAA}). However, the one which
gives the name is the following.

\begin{definition}[\sc Cyclic sieving phenomenon]
\label{def:siev}
Given a set $S$ of combinatorial
objects, an action on $S$ of a cyclic group $G$ of order~$m$
with generator~$g$ , and a polynomial~$P(q)$ in~$q$
with non-negative integer coefficients, we say
that the triple $(S,G,P)$ {\it exhibits the cyclic sieving
phenomenon}, if the number of elements of~$S$ fixed by~$g^e$ equals
$P(\om^e)$ for all primitive $m$-th roots of unity $\om$ and all integers~$e$. 
\end{definition}

We shall denote the set of all elements of~$S$ that are
fixed by~$g^e$ by $\Fix_{g^e}(S)$. Furthermore, we shall refer
to the polynomial $P(q)$ in Definition~\ref{def:siev} as the {\it
  cyclic sieving polynomial} of the cyclic sieving phenomenon.

A simple lemma states that, when proving a cyclic sieving phenomenon,  one can restrict $\om$ and $e$ to fewer values.
Note that there are two different letters ``$e$'' below, in two different fonts: the first one, $e$, is the number of times
one applies the generator~$g$, while the second, $\e$, is Euler's constant in the exponential.
  
  \begin{lemma}\label{lem:divisors}
  The triple $(S,G,P)$, with $G=\langle g\rangle$ of order~$m$,  exhibits the cyclic sieving
phenomenon if and only if the number of elements of $S$ fixed by~$g^e$ equals
$P(\om_0^e)$, with $\om_0:= \e^{2\pi i/m}$, for $e=0$ and for
divisors~$e$ of~$m$ with $e \in\llbracket 1, m-1\rrbracket$. Here
and subsequently, the symbol $\llbracket a,b\rrbracket$ stands for
$[a,b]\cap\mathbb Z$,
with $\mathbb Z$ denoting the set of integers.
\end{lemma}

\begin{proof}
  Our first argument is translation: an element is fixed by~$g^e$ if and only if it is fixed by~$g^{e+m}$.
However, we have $\om^{e+m}=\om^e$. So we can restrict our attention to $e\in \llbracket0,m-1\rrbracket$.

  Let us now explain why it suffices to prove the condition of Definition~\ref{def:siev} for $\om=\om_0$.  All primitive $m$-th roots of unity are Galois conjugates of one another in the extension of~$\Q$ by $\om_0$. Hence if a polynomial with rational coefficients vanishes at~$\om_0$, it vanishes at all primitive $m$-th roots of unity $\om$. Applying this to the  polynomial $x \mapsto P(x^e)- \Fix_{g^e}(S)$, for $e\in \llbracket 0, m-1\rrbracket$,
we obtain the result.

Finally, we explain why it suffices to focus on $0$ and divisors of~$m$. Let
$e \in \llbracket 0, m-1\rrbracket$,
and let $d=\gcd(e,m)$. There exist integers~$j$ and $k$ such that $d=je+km$. If an element is fixed by~$g^e$, it is also fixed by~$g^{-e}$, and of course by $g^m=g^{-m}=\id$. So it is fixed by~$g^d$. Conversely, if it is fixed by~$g^d$ it is fixed by~$g^e$, since $e$ is a multiple of~$d$.
(The fact that the same elements are fixed by $g^e$ and $g^d$ appears already in~\cite[Lem.~2.7]{Alexandersson}).
Assume that $P(\e^{2\pi i d/m})= \Fix_{g^d}(S)$. We have  to prove that $P(\e^{2\pi i e/m})= P(\e^{2\pi i d/m})$. Write $e=de'$ and $m=dm'$. Now~$e'$ and $m'$ are coprime, so that $\e^{\frac{2\pi i}{m'} e'}$ and $\e^{\frac{2\pi i}{m'}}$ are both primitive $m'$-th roots of unity. Thus, by the Galois argument used in the second part of the proof,
we have
  \[
    P(\e^{\frac{2\pi i}{m} e})= P(\e^{\frac{2\pi i}{m'} e'})= P(\e^{\frac{2\pi i}{m'}})=
    P(\e^{\frac{2\pi i}{m} d}).
  \]
  This concludes the proof.
    \end{proof}

    \begin{example}\label{ex:sieving-R}
Anticipating slightly
the next section, let $S$ be the set of rooted plane trees with $3$ edges, on which a rotation~$R$ of order~$m=6$ acts by moving the root corner around the tree, as illustrated in Figure~\ref{fig:example}. Then, for $e=0$ or $e$ a divisor of~$6$, we find
    \[
      \Fix_{R^e}(S)   =
      \begin{cases}
        5, & \text{if } e=0, \\
        0, & \text{if } e=1, \\
        2, & \text{if } e=2, \\
        3, & \text{if } e=3.
      \end{cases}
    \]
    We observe that a cyclic sieving phenomenon holds with
    \[
      P(q)= q^{6}+q^{4}+q^{3}+q^{2}+1.
    \]
    Indeed, for $\om=\e^{\pi i/3}$ we check that
    \[
      P(1)=5, \quad P(\om)=0, \quad P(\om^2)=2 \quad \text{and} \quad P(\om^3)=P(-1)=3.
    \]
        This will be generalised to any tree size in Theorem~\ref{thm:ord}.
    \end{example}

 \begin{figure}[htb]
    \centering
    \input{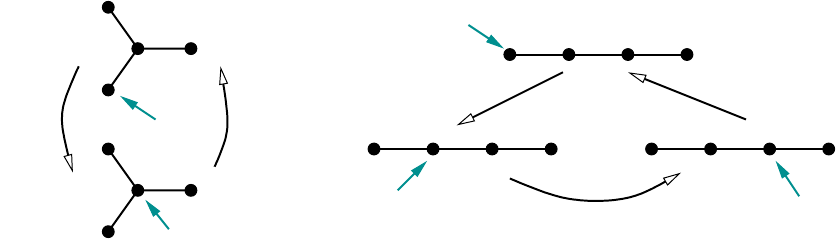_t}
    \caption{Rotation on rooted trees with $3$ edges.}
    \label{fig:example}
  \end{figure}

  \section{Tree rotations}
  \label{sec:rotations}

In this paper, we consider three different
rotations (plus a generalisation of one of them) that act on rooted trees by changing  the root corner. We define them in this section, and characterise trees that are fixed by a given power of one of these rotations.

Our main rotation, denoted $R$ and called \emm ordinary rotation, in what follows, acts on rooted trees by moving the root corner to the next corner in counterclockwise order around the tree. This is the rotation illustrated in Figure~\ref{fig:example}. Since a rooted tree with $n$~edges has $2n$~corners, we obtain
an action of the cyclic  group of order~$2n$ on~$\T(n)$.

In the somewhat degenerate case where $n=1$, there is a unique rooted tree in~$\T(n)$, which is of course fixed by all powers of the rotation~$R$. We could then consider that the natural cyclic group acting on $\T(n)$ has order~$1$ only. Nevertheless, even when $n=1$, we study the action of the cyclic group of order~$2n$ on~$\T(n)$.

The \emm external rotation, $R_l$ acts on trees rooted at a leaf by moving the root corner to the next leaf, still in counterclockwise order. This yields an action of the cyclic group of order~$k$ on trees with $k$~leaves.

The \emm internal rotation, $R_i$ acts on trees rooted at a non-leaf by moving the root corner to the next corner incident to a non-leaf, in counterclockwise order. This yields an action of the cyclic group of order~$2n-k$  on trees with $n$~edges and $k$~leaves.

We will also consider a generalisation of the external rotation, for any integer $\delta\ge 1$. The \emm $\delta$-rotation,~$R_\delta$ acts on trees rooted at a vertex of degree~$\delta$ by moving the root to the next corner
of degree~$\delta$. This yields an action of the cyclic group of order~$\delta n_\delta$ on  trees with $n_\delta$~nodes of degree~$\delta$.

\medskip
Recall from Lemma~\ref{lem:divisors} that, in order to establish a cyclic sieving phenomenon for the triple $(S, G, P)$, with $G=\langle g \rangle$, we only have to count elements of~$S$ fixed by~$g^e$ for $e=0$ and for $e$ a divisor of~$|G|$. 
We now want to characterise trees of $\T(n)$ invariant by $R^e$, for $e$ a divisor of~$2n$. Let us first  recall the definition of the
\emm centre,
of a tree~$\tau$: if $\tau$ is reduced to a node or to an edge then the centre is $\tau$
itself.
Otherwise, it is
  determined recursively  by deleting all leaves of~$\tau$, and
by iterating this pruning until a tree with one or two nodes is
  obtained. For instance, pruning the tree of Figure~\ref{fig:rooted-tree} shows in one step that its centre is an edge.

  We will also need some transformations $\Phi$ and $\Psi_d$, for $d\ge 2$, that extract a piece of (some) rooted trees~$\tau$. 
  If the centre of~$\tau$ is an edge, we define  $\Phi(\tau)$ as the tree obtained by cutting the central edge $c$ in its middle, retaining the piece that contains the root corner, and adding a marked leaf at the end of the half-edge coming from $c$ (Figure~\ref{fig:Phidef}). Note that $\Phi$ is only defined on edge-centred trees. For $d\ge 2$, and $\tau$ a rooted tree whose centre is a node~$c$ of degree~$\ell$ divisible by~$d$, we define $\Psi_d(\tau)$ as follows: let
  $\tau_1, \ldots, \tau_\ell$ be the trees incident to~$c$,  in counterclockwise order, with $\tau_1$ containing the root \emm edge,.
Then $\Psi_d(\tau)$ is obtained from $\tau$ by only retaining the trees $\tau_{1}, \ldots, \tau_{\ell/d}$ and marking the node $c$ (Figure~\ref{fig:Psidef}).
 
\begin{figure}[htb]
    \centering
    \input{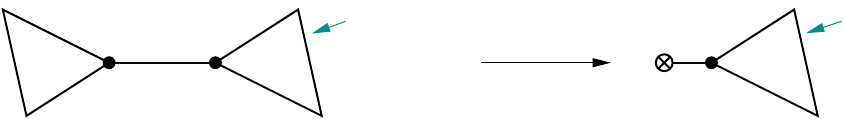_t}
    \caption{The transformation $\Phi$, acting on an edge-centred tree $\tau$.}
    \label{fig:Phidef}
  \end{figure}

\begin{figure}[htb]
    \centering
      \input{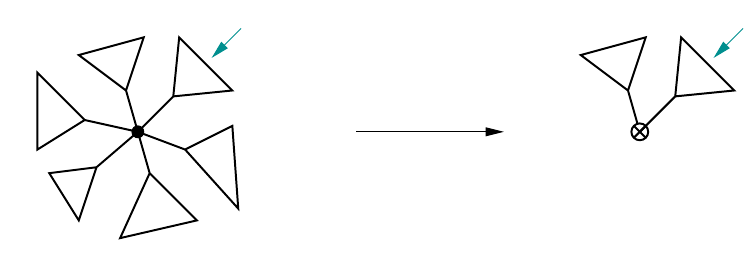_t}
    \caption{The transformation $\Psi_d$, with $d=3$, acting on a node-centred tree $\tau$ of central degree $\ell=6$.}
    \label{fig:Psidef}
  \end{figure}

\begin{proposition}\label{prop:structure}
Let $n\ge 1$, and let
$e$ be a positive divisor of~$2n$ with $e<2n$.
  Write $d=2n/e$. Note that $e\le n$, so that $d\ge 2$.

 If $e$ is odd and $e=n$ {\em(}so that $n$ is odd and $d=2${\em)}, all trees of $\T(n)$ fixed by~$R^e$ have a central edge, and the above transformation $\Phi$ induces  a bijection between these trees and    trees of $\T\!\left(\frac {n+1} 2\right)$ having a marked non-root leaf.

    If $e$ is odd and $e<n$, then no tree of $\T(n)$ is fixed by~$R^e$.

    If $e$ is even {\em(}equivalently, $d\mid n${\em)}, then  all trees of~$\T(n)$ fixed by~$R^e$ have a central
node, the degree of which is divisible by~$d$,
     and the above transformation $\Psi_d$ induces a bijection between these trees
   and trees  of $\T\!\left( \frac e 2\right)= \T\!\left(\frac n d\right)$ having a marked node.
 \end{proposition}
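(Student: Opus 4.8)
The plan is to translate the algebraic condition ``fixed by $R^e$'' into a geometric statement about rotational symmetry of the underlying plane tree, and then to read off the structure from the location of the center. First I would recall that walking counterclockwise around a tree $\tau\in\T(n)$ starting from the root corner visits all $2n$ corners in a cyclic order, and that $R$ is exactly the shift of this tour by one corner. Re-rooting at the corner $e$ steps ahead produces an isomorphic rooted tree precisely when $\tau$ admits an orientation-preserving automorphism $\rho$ that advances the tour by $e$ positions. Now the orientation-preserving automorphism group of a plane tree is cyclic, say of order $t$, and it acts \emph{freely} on the set of corners: a nontrivial rotation fixes only the center, which is not a corner, and it permutes the sectors around a central vertex without a fixed point. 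Consequently its generator advances the tour by exactly $2n/t$ corners, the realizable shifts are the multiples of $2n/t$, and $\tau$ is fixed by $R^e$ if and only if $d=2n/e$ divides $t$, i.e. if and only if $\tau$ carries a rotational symmetry of order $d$, realized by $R^e$ itself. This reduction is the heart of the matter, and establishing it cleanly -- in particular the free action and the shift-by-$2n/t$ claim -- is where I expect the main care to be needed.

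With this in hand I would invoke the fact that the center of $\tau$ is preserved by every automorphism, and split according to whether it is a vertex or an edge. If the center is a vertex $c$, then $R^e$ fixes $c$ and cyclically permutes the $\ell$ subtrees hanging from $c$ in orbits of size $d$; hence $d\mid\ell$, and writing the total edge count as $d$ times the number of edges in a single fundamental domain shows $d\mid n$, so that $e=2n/d$ is even. Conversely, an orientation-preserving automorphism that fixes an edge setwise must be the half-turn exchanging its two endpoints, which forces $d=2$ and hence $e=n$; matching the two isomorphic halves across the central edge then shows $n=2m+1$ is odd. This dichotomy yields the middle assertion at once: if $e$ is odd, the center cannot be a vertex, so it is an edge and $e=n$; therefore when $e$ is odd with $e<n$, no tree of $\T(n)$ is fixed by $R^e$.

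It then remains to verify that $\Phi$ and $\Psi_d$ are the announced bijections. In the vertex-centered (even) case, a fundamental domain for the order-$d$ symmetry consists of the $\ell/d$ consecutive subtrees $\tau_1,\dots,\tau_{\ell/d}$ that contain the root edge, since $R^e$ sends $\tau_j$ to $\tau_{j+\ell/d}$; thus $\Psi_d$ retains exactly these together with the marked center $c$. Because the $d$ orbits account equally for all $n$ edges, the retained tree has $n/d=e/2$ edges, lands in $\T(e/2)=\T(n/d)$, and carries a marked node. The inverse map glues $d$ rotated copies of this subtree-forest around the marked node, the resulting $d$-fold symmetry making that node the center, so the construction is reversible. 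In the edge-centered case ($e=n$ odd), $\Phi$ cuts the central edge, keeps the half containing the root corner, and caps the dangling half-edge with a marked leaf; that half contributes $m$ edges and the cap one more, so the image lies in $\T(\tfrac{n+1}2)$ and its marked leaf is necessarily distinct from the root corner, while the half-turn reconstruction provides the inverse.

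The only genuinely delicate bookkeeping, beyond the symmetry reduction above, is tracking the root corner through the cut-and-glue operations and confirming both the edge counts and the marked-versus-root conditions; I would settle these by following the tour across the center, which also makes transparent why the retained piece is rooted at the original root corner in each case.
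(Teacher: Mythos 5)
Your proposal is correct and follows essentially the same route as the paper: both arguments identify a tree fixed by $R^e$ with a tree carrying a rotational symmetry of order $d$, locate the (automorphism-invariant) center to split into the edge-centered case (forcing $d=2$, $e=n$, $n$ odd) and the vertex-centered case (forcing $e$ even and $d\mid\ell$), and then realize $\Phi$ and $\Psi_d$ as fundamental-domain extractions with gluing as the inverse. The only difference is presentational: you package the symmetry reduction via the cyclic automorphism group acting freely on corners, whereas the paper obtains the same conclusions by explicit re-rooting near the center; both are sound.
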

 
  \begin{proof}
    We first observe that $\tau \in \T(n)$ is fixed by~$R^e$ if and only if any tree of the form $R^a(\tau)$ is fixed by~$R^e$.

\medskip
Let  $\tau \in \T(n)$ be an edge-centred tree, as on the left of Figure~\ref{fig:Phidef}, and assume that it is fixed by~$R^e$ for
a positive divisor~$e$ of~$2n$.
We use the notation $\tau_1$, $\tau_2$ of this figure. Then the tree $\tau'$ obtained by re-rooting $\tau$ at the corner that follows $\tau_1$ (in counterclockwise order) is also fixed by~$R^e$. Since $e<2n$, it must be that $R^e$ moves the root corner of~$\tau'$ to the corner that follows $\tau_2$. This implies in turn that~$\tau_1$ and $\tau_2$ are  copies of the same tree, that $n$ is odd, and that $e=n$, so that $R^e$ performs a half-turn rotation. The tree~$\Phi(\tau)$ then has  $\frac {n+1} 2$ edges. Conversely, any tree $\tau^\times$ of~$\T\!\left(\frac {n+1} 2\right)$ with a marked leaf allows us to reconstruct an edge-centred tree $\tau$ with $n$ edges that is fixed by~$R^{n}$: we glue two copies of~$\tau^\times$ at their marked leaf, and then erase this leaf and one of the root corners.
    
\medskip
Now let  $\tau \in \T(n)$ be a node-centred tree, with a centre $c$ of degree~$\ell$, as  on the left of  Figure~\ref{fig:Psidef}, and assume that it is fixed by~$R^e$. We use the notation $\tau_1, \ldots, \tau_\ell$ of this figure. Then the tree $\tau'$ obtained by re-rooting $\tau$ at the corner that is incident to $c$ and precedes $\tau_1$ is also fixed by~$R^e$. Hence~$R^e$ moves the root corner of~$\tau'$ to another  corner incident to~$c$, and in particular~$e$ is even. Say that~$R^e$ moves the root corner of~$\tau'$ to the corner that follows $\tau_{i}$. 
Clearly, the $(2n/e)$-fold application of the transformation~$R^e$
to~$\tau'$ 
    sends the root back to its original position. Writing $d=2n/e$ as in the statement of the  proposition, we see that this implies that $d\cdot i=\ell$ and that the trees $\tau_a$ and $\tau_{a+i}$ are copies of one another, for all $a$ (taken modulo~$\ell$).  Note that the tree $\Psi_d(\tau)$ then consists of $\tau_1, \ldots, \tau_i$ and has $n/d=e/2$ edges (and~$c$ as a marked node, by definition of~$\Psi_d$).
    Conversely, starting from a tree $\tau^\times$ in $\T\!\left(\frac e 2\right)$ with a marked node~$c$,
    we reconstruct a tree $\tau$ fixed by~$R^e$ as follows: we label
the subtrees of~$\tau'$ attached to $c$ by $\tau_1, \ldots, \tau_i$,
with~$\tau_1$ containing the root edge. We then attach
$d$~copies of the $i$-tuple $\tau_1, \ldots, \tau_i$ to a central node,
in this order. At the end, we only retain one of the root corners.

\medskip
To summarise:
for a positive divisor~$e$ of~$n$,
an edge-centred tree with $n$ edges can only be invariant by $R^e$ if $e=n$ and $n$ is odd. A node-centred tree with $n$ edges can  only be invariant by $R^e$ if $e$ is even.
The proposition now follows by putting all the above observations together.
      \end{proof}

The following proposition relates trees fixed by a power of $R_l$, $R_i$ or $R_\delta$ to trees fixed by a power of~$R$.

\begin{proposition}\label{prop:RRR}
Let $\tau$ be a rooted tree with $n$ edges and $k$ leaves, rooted at a leaf,
and let $e$ be a positive divisor of\/~$k$ with $e<k$.
Write $d:=k/e$. Then
  \[
R_l^e(\tau)= \tau \quad \text{if and only if}\quad  R^f(\tau)=\tau,
  \]
  with $f=2n/d$. In particular, such trees can only exist
if $e=k/2$ with $k$ and $n+1$ even, or if $k/e$ divides~$n$.

\medskip
Let $\tau$ be a rooted tree with $n$ edges and $k$ leaves, rooted at a non-leaf,
and let $e$ be a positive divisor of $2n-k$ with $e<2n-k$.
Write $d=(2n-k)/e$. Then
  \[
R_i^e(\tau)= \tau \quad \text{if and only if}\quad  R^f(\tau)=\tau,
  \]
with $f=2n/d$. In particular, such trees can only exist
if $e=(2n-k)/2$ with $k$ and $n+1$ even, or if $(2n-k)/e$  divides~$n$.

\medskip
Let $\delta\ge 1$. Let $\tau$ be a rooted tree with $n$ nodes and $n_\delta$ nodes of degree~$\delta$, rooted at a node of degree~$\delta$. Let $e$ be a positive divisor of $\de n_\de$ with $e< \de n_\de$.
Write $d=\delta n_\delta/e$. Then
   \[
R_\delta^e(\tau)= \tau \quad \text{if and only if}\quad  R^f(\tau)=\tau,
  \]
  with $f=2n/d$. In particular, such trees can only exist
if $e=\delta n_\delta/2$ with $\delta n_\delta$ and $n+1$ even, or if $\delta n_\delta/e$  divides~$n$.
\end{proposition}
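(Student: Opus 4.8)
The plan is to prove all three parts by a single argument, exploiting that each restricted rotation is nothing but the ordinary rotation read off on a distinguished set of corners. Write $R_*$ for the rotation under consideration, with $*\in\{l,i,\de\}$, and let $N_*$ be the number of corners it cycles through, so that $N_l=k$, $N_i=2n-k$ and $N_\de=\de n_\de$, and $R_*$ generates a cyclic group of order $N_*$. In each part we set $d=N_*/e\ge 2$, and the target exponent $f=2n/d$ is chosen precisely so that $R^f$ has order $d$ in $\langle R\rangle$, exactly as $R_*^e$ has order $d$ in $\langle R_*\rangle$. Everything will reduce to showing that both $R_*^e(\tau)=\tau$ and $R^f(\tau)=\tau$ are equivalent to the single intrinsic condition that the rotational symmetry group of $\tau$ has order divisible by $d$.

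The structural ingredient I would extract from Proposition~\ref{prop:structure} and the accompanying discussion of the center is that a nontrivial orientation-preserving automorphism of a plane tree acts \emph{freely} on the set of corners: its only fixed point is the central node, which is not a corner, while in the edge-centered case the central edge is reversed and again no corner is fixed. Hence the rotational symmetry group of $\tau$ is cyclic, of some order $d_0$, and it splits \emph{every} family of corners into orbits of common size $d_0$; in particular $d_0$ divides each of $2n$, $k$, $2n-k$ and $\de n_\de$. Since a rotational symmetry preserves the type of each corner and acts by a cyclic shift, this symmetry group is realized simultaneously as a subgroup of $\langle R\rangle\cong\Z/2n\Z$ and of $\langle R_*\rangle\cong\Z/N_*\Z$, in each case as the \emph{unique} subgroup of its order.

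For the equivalence itself, suppose first that $R_*^e(\tau)=\tau$ with $e=N_*/d$. The isomorphism witnessing this is a rotational automorphism $\sigma$ of $\tau$; its order equals that of $R_*^e$, namely $N_*/\gcd(N_*,e)=d$. Realized through the shift it induces on all corners, $\sigma$ is an element of order $d$ of the cyclic group $\langle R\rangle$, and therefore generates its unique subgroup of order $d$, which is $\langle R^{2n/d}\rangle$; in particular $R^{2n/d}\in\langle\sigma\rangle$, so $R^f(\tau)=\tau$. The converse is the mirror image, read inside $\langle R_*\rangle$. I expect the genuine subtlety here to be that $\sigma$ a priori shifts the $N_*$ distinguished corners by $(N_*/d)\,u$ for some $u$ coprime to $d$, and all corners by $(2n/d)\,v$, so that one may \emph{not} identify $\sigma$ with $R_*^{N_*/d}$ or with $R^{2n/d}$ on the nose. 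This is exactly what the uniqueness of the order-$d$ subgroup of a cyclic group dissolves: whatever the shift, $\langle\sigma\rangle$ is forced to equal $\langle R^{2n/d}\rangle$ (resp.\ $\langle R_*^{N_*/d}\rangle$), so both fixation conditions coincide with ``$d\mid d_0$'', hence with each other.

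It remains to read off the existence conditions. Since $f=2n/d<2n$ is a divisor of $2n$, I would feed $R^f$ into Proposition~\ref{prop:structure}, whose parameter ``$d$'' then equals our $d$. Its edge-centered alternative forces $d=2$ and $n$ odd, i.e.\ $e=N_*/2$ with $N_*$ even and $n+1$ even; its vertex-centered alternative forces $d\mid n$, i.e.\ $N_*/e\mid n$. Noting that $N_*$ even means $k$ even in the first two parts (as $2n$ is even) and $\de n_\de$ even in the third, and specializing $(N_*,e)$ to $(k,e)$, $(2n-k,e)$ and $(\de n_\de,e)$ in turn, gives the three stated dichotomies word for word.
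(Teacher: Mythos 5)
Your argument is correct, and it reaches the conclusion by a slightly different route than the paper. The paper proves only the leaf case explicitly and argues concretely: it labels the leaves $\ell_0,\dots,\ell_{k-1}$ along the contour, lets $f$ be the contour distance from $\ell_0$ to $\ell_e$, observes that $R^f$ and $R_l^e$ then move the root to the same corner, and deduces $f=2n/d$ from the fact that $d$ iterations of $R^f$ tile the contour of length $2n$; the existence conditions are then read off from Proposition~\ref{prop:structure} exactly as you do. You replace the explicit computation of the shift $f$ by a group-theoretic argument: the (orientation-preserving) symmetry group of the plane tree acts freely on corners, hence embeds as a cyclic subgroup of order $d_0$ in both $\Z/2n\Z$ and $\Z/N_*\Z$, and both fixation conditions reduce to $d\mid d_0$ by uniqueness of subgroups of a given order in a cyclic group. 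Your observation that the witnessing automorphism need not shift all corners by exactly $2n/d$ --- and that subgroup uniqueness dissolves this --- is precisely the point the paper handles instead by its ``repeat $d$ times to return to the start'' tiling argument. What your version buys is uniformity (all three parts, and indeed any type-preserving family of distinguished corners, are handled at once) at the cost of importing the free-action fact, which is not stated as such in Proposition~\ref{prop:structure} and which you would need to justify (e.g., an automorphism commutes with the contour successor map, so it acts on the $2n$-cycle of corners as a rotation, and is determined by that action). With that fact made explicit, your proof is complete.
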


\begin{proof}
We only prove  the first statement, since the other proofs are similar.
Let $\ell_0, \ell_1, \ldots, \ell_{k-1}$ be the $k$~leaves of~$\tau$,
starting from the root leaf $\ell_0$,  in counterclockwise order around $\tau$
(cf.\ Figure~\ref{fig:star}).
By definition, $R_l^e$ moves the root of~$\tau$ to~$\ell_e$. Given that $de=k$,
the $d$-fold application of the transformation $R_l^e$ to~$\tau$
puts the root back in its original place. Let $f$ be the length of the path that joins $\ell_0$ to $\ell_e$ when walking around~$\tau$ in counterclockwise order (we consider edges to have unit length). Then  $R^f$ moves $\ell_0$ to $\ell_e$, so that $R^f(\tau)=R_l^e(\tau)=\tau$. 
The $d$-fold repetition of the operation~$R^f$, starting from~$\tau$,
brings the root back to its original place, so $f=2n/d$.

  Conversely, a tree $\tau$ of $\T_l(n,k)$ that is fixed by~$R^{2n/d}$ is fixed by~$R_l^e$ with $e=k/d$, since both transformations move the root to the same position.

  The conditions on $e$ come from the fact that trees of $\T(n)$ fixed by~$R^f$ can only exist if $f=n$ with $n$ odd or if $f$ is even, by Proposition~\ref{prop:structure}. That is, with $d=2n/f=k/e$, if $d=2$ with $n$ odd, or if $d$ divides~$n$.
\end{proof}

\begin{figure}[htb]
    \centering
      \input{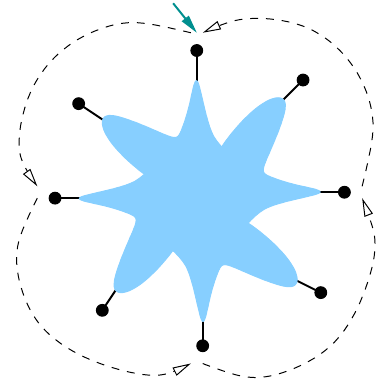_t}
    \caption{A tree of $\T_l(n,k)$ fixed by $R_l^e$, here with $k=8$, $e=2$ and $d=4$.} 
    \label{fig:star}
   \end{figure}

\section{Ordinary tree rotation}
\label{sec:ordinary}

In this section,
we consider the ordinary rotation~$R$, which moves the root corner of a rooted tree to the next corner in counterclockwise order
(see Figure~\ref{fig:example}).
This rotation yields an action of the  cyclic group
of order~$2n$ on trees of~$\T(n)$.

In the statement of the theorem below and as well in the rest of the
article, we use the standard $q$-notations for {\it $q$-integers} and
{\it $q$-binomial coefficients}:
\begin{align*}
[M]_q&:=\frac {1-q^M} {1-q},\\
[M]_q!&:=[1]_q\,[2]_q\cdots[M]_q\ \text{ and } \ [0]_q!:=1,\\
\begin{bmatrix} M\\N\end{bmatrix}_q&:=
  \frac {[M]_q!} {[N]_q!\,[M-N]_q!}, \ \text{ for } 0\le N\le M.
\end{align*}
By construction,  the above quantities equal $M$, $M!$, and  $\binom{M}{N}$, respectively, when $q=1$.

\subsection{Trees with prescribed number of nodes}

\begin{theorem} \label{thm:ord}
Let $n$ be a positive integer. The cyclic group of order~$2n$ acts on~$\T(n)$ by the rotation~$R$, and  the triple
\[\left(\T(n),\langle R\rangle,
    \frac {1}{[n+1]_q}\begin{bmatrix}
    2n\\n\end{bmatrix}_q\right)
\]
exhibits the cyclic sieving phenomenon.
\end{theorem}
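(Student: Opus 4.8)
The plan is to invoke Lemma~\ref{lem:divisors}: since $\langle R\rangle$ has order $2n$, it suffices, writing $\om_0=e^{2\pi i/(2n)}$, to check that $\Fix_{R^e}(\T(n))=P(\om_0^e)$ for $e=0$ and for every divisor $e$ of $2n$ with $1\le e\le 2n-1$, where $P(q)=\Cat_n(q):=\frac1{[n+1]_q}\begin{bmatrix}2n\\n\end{bmatrix}_q$. The case $e=0$ is immediate, since $P(1)=C_n=|\T(n)|$ by~\eqref{eq:T(n)}. For a proper divisor $e\mid 2n$ I set $d=2n/e$ (so $d\ge 2$), read off $\Fix_{R^e}(\T(n))$ from Proposition~\ref{prop:structure}, and separately evaluate $P(\om_0^e)=\Cat_n(\zeta)$ at the primitive $d$-th root of unity $\zeta=\om_0^e=e^{2\pi i/d}$, then compare.

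For the enumeration there are three cases. If $e$ is even, then $d\mid n$ and the bijection $\Psi_d$ identifies the fixed trees with trees of $\T(s)$ carrying a marked node, $s=n/d$; since such a tree has $s+1$ nodes, $\Fix_{R^e}(\T(n))=(s+1)\,C_s=\binom{2s}{s}$. If $e$ is odd and $e<n$, Proposition~\ref{prop:structure} gives $\Fix_{R^e}(\T(n))=0$. If $e$ is odd and $e=n$ (so $n$ is odd and $d=2$), the bijection $\Phi$ identifies the fixed trees with trees of $\T(m)$, $m=\tfrac{n+1}2$, carrying a marked non-root leaf. To count the latter I would re-root each tree at its marked leaf, obtaining a bijection between pairs (tree of $\T(m)$, marked leaf) and pairs (leaf-rooted tree of $\T(m)$, one of its $2m$ corners); since leaf-rooted trees of $\T(m)$ are in bijection with $\T(m-1)$ and hence number $C_{m-1}$, there are $2m\,C_{m-1}$ pairs with a marked leaf and $C_{m-1}$ in which the marked leaf is the root. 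Subtracting, $\Fix_{R^n}(\T(n))=(2m-1)C_{m-1}=\binom{2m-1}{m-1}=\binom{n}{(n-1)/2}$.

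The technical heart is the evaluation of $\Cat_n(\zeta)$, the obstacle being that $[n+1]_q$ vanishes at $\zeta$ whenever $d\mid(n+1)$, so the quotient is a priori indeterminate. I would remove the division by using the polynomial identity
\[
\Cat_n(q)=\begin{bmatrix}2n\\n\end{bmatrix}_q-q\begin{bmatrix}2n\\n+1\end{bmatrix}_q,
\]
and then evaluate each $q$-binomial through the $q$-Lucas theorem $\begin{bmatrix}A\\B\end{bmatrix}_\zeta=\binom{\fl{A/d}}{\fl{B/d}}\begin{bmatrix}A\bmod d\\B\bmod d\end{bmatrix}_\zeta$, with the convention that the residual $q$-binomial vanishes when $B\bmod d>A\bmod d$. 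For even $e$ one has $2n\equiv n\equiv0$ and $n+1\equiv1\pmod d$, so the second term vanishes and $\Cat_n(\zeta)=\binom{2s}{s}$ with $s=n/d$. For $e$ odd with $d=2$ the first term vanishes and one gets $\Cat_n(-1)=\binom{n}{(n+1)/2}=\binom{n}{(n-1)/2}$. For $e$ odd with $d>2$ (then $d$ is even and $2n/d$ is odd) both residues $n\bmod d=d/2$ and $(n+1)\bmod d=d/2+1$ are positive while $2n\equiv0\pmod d$, so both $q$-binomials vanish and $\Cat_n(\zeta)=0$. In each case this matches the corresponding fixed-point count found above, which proves the cyclic sieving phenomenon.

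Finally, the non-negativity and integrality of the coefficients of $\Cat_n(q)$, required by Definition~\ref{def:siev}, I would record in a separate lemma: it is classical that $\frac1{[n+1]_q}\begin{bmatrix}2n\\n\end{bmatrix}_q$ is the generating polynomial of $\T(n)$ (equivalently of Dyck paths) with respect to a natural statistic, and in particular lies in $\N[q]$.
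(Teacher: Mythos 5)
Your proposal is correct, and its combinatorial half coincides with the paper's: both reduce to $e=0$ and proper divisors of $2n$ via Lemma~\ref{lem:divisors}, read off the fixed trees from Proposition~\ref{prop:structure}, and arrive at the counts $\binom{e}{e/2}$ (for $e$ even), $\binom{n}{(n-1)/2}$ (for $e=n$ odd), and $0$ otherwise. Two points differ. First, for the marked-non-root-leaf count you re-root at the marked leaf and invoke a corner/leaf double count, giving $(2m-1)C_{m-1}$, whereas the paper inserts a pendant edge at the root corner of a tree of $\T(\frac{n-1}{2})$ and chooses one of $n$ corners, giving $nC_{(n-1)/2}$; these agree, and both are legitimate. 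Second, and more substantively, your root-of-unity evaluation replaces the paper's Lemma~\ref{lem:1} (which pairs $q$-integers congruent modulo $d$ and passes to the limit via~\eqref{eq:SM}) with the subtraction identity $\frac{1}{[n+1]_q}\begin{bmatrix}2n\\n\end{bmatrix}_q=\begin{bmatrix}2n\\n\end{bmatrix}_q-q\begin{bmatrix}2n\\n+1\end{bmatrix}_q$ followed by the $q$-Lucas theorem. This identity is a one-line verification ($[n+1]_q-q[n]_q=1$), and it neatly sidesteps the $0/0$ indeterminacy at the outset; the price is importing $q$-Lucas, which the paper never uses, while the paper's ratio method is self-contained and is reused verbatim in all the later, more intricate evaluation lemmas (where a clean subtraction identity is not always available). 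One cosmetic remark: in the $d=2$ case you should make explicit that the surviving term is $-q\begin{bmatrix}2n\\n+1\end{bmatrix}_q$ evaluated at $q=-1$, so the factor $-q$ contributes $+1$ and the sign works out; as stated the conclusion is right but the sign bookkeeping is silent.
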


\begin{example}
For $n=3$,  the above theorem gives the polynomial $P(q)= q^{6}+q^{4}+q^{3}+q^{2}+1$. This cyclic sieving phenomenon was already checked in Example~\ref{ex:sieving-R}.
\end{example}

\begin{proof}[Proof of Theorem~\ref{thm:ord}]
The fact that $\frac {1}
{[n+1]_q}\left[\begin{smallmatrix} 2n\\n\end{smallmatrix}\right]_q$
is a polynomial in~$q$ with non-negative coefficients goes back to
MacMahon, who showed that it is the generating function for --- what
he calls --- \emm lattice permutations,, objects that are in trivial
bijection with Dyck paths, and which are enumerated with respect to
--- what MacMahon calls --- the \emm greater index,, which was later renamed
the ``major index'' by Foata. We refer the interested reader to
\cite[p.~256]{FuHoAA}.

\medskip
Now let $\om$ be a primitive $2n$-th root of unity.
We have to prove 
\begin{equation} \label{eq:2}
\Fix_{R^e}(\T(n))=
\frac {1}
{[n+1]_q}\begin{bmatrix} 2n\\n\end{bmatrix}_q
\Bigg\vert_{q=\om^ e}
\end{equation}
for all integers $e$.
By Lemma~\ref{lem:divisors},
it suffices to prove this for values of $e\in \llbracket 0, 2n-1\rrbracket$ equal to $0$ or dividing $2n$. Lemma~\ref{lem:1} below  gives the corresponding values of the right-hand side of~\eqref{eq:2}.

\medskip
We begin with $e=0$.  All trees in $\T(n)$ are invariant under  $R^0=\text{id}$, and we have $|\T(n)|=\frac {1} {n+1}\binom
{2n}{n}$  by~\eqref{eq:T(n)}. As observed above when defining $q$-integers, $q$-factorials and $q$-binomials, this is indeed the value of the right-hand side of~\eqref{eq:2}  at $q=1$.

\medskip
  We now assume that $0<e<2n$, with $e$ a divisor of~$2n$. We rely on Proposition~\ref{prop:structure}, which describes the structure of trees fixed by~$R^e$.

\medskip
  If $e$ is odd and equals $n$, there are as many trees in $\T(n)$ fixed by~$R^e$ as trees of $\T\!\left(\frac {n+1} 2\right)$ with a marked non-root leaf. The latter trees can be constructed as follows: we start with a tree of $\T\!\left(\frac {n-1} 2\right)$, insert  at the root corner a new edge ending with a  marked leaf (the resulting tree has $\frac {n+1} 2$ edges and $n+1$~corners), forget this root corner, and create a new root corner in one of the $n$~corners of the tree that are not incident to the marked leaf. According to~\eqref{eq:T(n)},
  this gives
   \[
     \Fix_{R^e}(\T(n))= n \times\left|\T\left(\frac{n-1}2\right)\right|=
     n \times \frac {1} {\frac{n+1}2}\binom {2\frac {n-1} {2}}{\frac {n-1} {2}}.
  \]
 This is equivalent to the expression in the second alternative
  of~\eqref{eq:1}, and, hence, proves~\eqref{eq:2} in this case.

  For odd values of $e$ distinct from $n$, no tree of $\T(n)$ is fixed by~$R^e$. This proves~\eqref{eq:2}
by the last alternative of~\eqref{eq:1} in that case.

Finally,
if $e$ is even and positive,
Proposition~\ref{prop:structure} tells us that there are as many trees in~$\T(n)$ fixed by~$R^e$ as trees of~$\T(e/2)$ having a marked node. According to~\eqref{eq:T(n)}, with~$n$ replaced by~$e/2$, we conclude that
  \[
    \Fix_{R^e}(\T(n))= \left( 1+ \tfrac e 2\right)  \times \frac {1} {1+\tfrac e 2}\binom {2\frac {e} {2}}{\frac {e} {2}} =\binom{e}{\frac e 2}.
  \]
 This coincides with the expression in the third alternative
  of~\eqref{eq:1}, and, hence, proves \eqref{eq:2} also in this case.

  The proof of the theorem is now complete (assuming Lemma~\ref{lem:1} below).
\end{proof}

\begin{lemma} \label{lem:1}
Let $\om$ be a primitive $2n$-th root of unity, and let $e$ be a
non-negative integer $<2n$
equal to~$0$ or dividing~$2n$.
Then we have
\begin{equation} \label{eq:1} 
\frac {1} {[n+1]_q}\begin{bmatrix} 2n\\n\end{bmatrix}_q
\Bigg\vert_{q=\om^e}
=
\begin{cases}
  \frac {1} {n+1}\binom {2n}{n},&\text{if }e=0,
  \\[.5em]
 \binom {n}{\frac{n+1}2},
&\text{if }e=n\text{ and $n$ is odd},\\[.5em]
 \binom {e}{\frac e 2},
&\text{if $e$ is even and positive},\\
0,&\text{otherwise.}
\end{cases}
\end{equation}
\end{lemma}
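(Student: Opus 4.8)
The plan is to evaluate the $q$-Catalan number $C_n(q):=\frac{1}{[n+1]_q}\left[\begin{smallmatrix}2n\\n\end{smallmatrix}\right]_q$ at $q=\om^e$ by first rewriting it so that \emph{no} division by a $q$-integer remains. This matters because $[n+1]_q$ vanishes at $q=\om^e$ precisely when $d:=2n/e$ divides $n+1$, and in those cases a naive substitution produces an indeterminate $0/0$. The identity I would use is
\[
\frac{1}{[n+1]_q}\begin{bmatrix}2n\\n\end{bmatrix}_q=\begin{bmatrix}2n\\n\end{bmatrix}_q-q\begin{bmatrix}2n\\n+1\end{bmatrix}_q,
\]
which follows immediately from the elementary simplification $[n+1]_q-q\,[n]_q=1$. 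Since both summands on the right are honest polynomials in~$q$, the right-hand side of~\eqref{eq:1} can then be read off by substituting $q=\om^e$ term by term.

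Next I would set $\zeta:=\om^e$ and record that, because $e$ divides $2n$, $\zeta$ is a \emph{primitive $d$-th root of unity} with $d=2n/e\ge2$. The main tool is then the $q$-Lucas theorem for the evaluation of Gaussian binomial coefficients at roots of unity: for a primitive $d$-th root of unity $\zeta$ and integers $a,b\ge0$,
\[
\begin{bmatrix}a\\b\end{bmatrix}_\zeta=\binom{\fl{a/d}}{\fl{b/d}}\begin{bmatrix}a\bmod d\\b\bmod d\end{bmatrix}_\zeta.
\]
Applying this to $\left[\begin{smallmatrix}2n\\n\end{smallmatrix}\right]_\zeta$ and $\left[\begin{smallmatrix}2n\\n+1\end{smallmatrix}\right]_\zeta$ reduces everything to reading off the residues of $n$ and $n+1$ modulo~$d$, using throughout that $2n=ed$ is divisible by~$d$ (so the top residue is~$0$, contributing the factor $\binom{e}{\cdot}$) and that $\left[\begin{smallmatrix}0\\b_0\end{smallmatrix}\right]_\zeta=0$ whenever $b_0\ge1$.

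The case $e=0$ is immediate from $q=1$ and~\eqref{eq:T(n)}. For $e$ even and positive one has $d\mid n$ (indeed $n/d=e/2$ is an integer iff $e$ is even), with $n=(e/2)d$; the first $q$-binomial then contributes $\binom{e}{e/2}$, while the second vanishes, its bottom residue being $1$ over a top residue~$0$. This yields $\binom{e}{e/2}$. For $e$ odd one has $d=2m$ even and $n=em$: when $m\ge2$ (equivalently $e<n$), the residues of both $n$ and $n+1$ are positive over a top residue~$0$, so both $q$-binomials vanish and we obtain~$0$. The one delicate case is $e=n$ odd, where $d=2$ and hence $\zeta=-1$: here the first $q$-binomial vanishes, and the second evaluates to $\binom{n}{(n+1)/2}$, so that
\[
C_n(\zeta)=-\zeta\binom{n}{\tfrac{n+1}2}=\binom{n}{\tfrac{n+1}2},
\]
since $-\zeta=1$.

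The hard part will be purely bookkeeping: keeping the residue computations modulo~$d$ straight across the three parity regimes, and, in the $d=2$ case, correctly tracking the prefactor $-\zeta$ coming from the second term of the rewritten identity — this sign is exactly what converts $-\zeta\binom{n}{(n+1)/2}$ into the required \emph{unsigned} binomial coefficient. Once the polynomial identity above is in place, no limiting or L'Hôpital argument is needed, which is the main simplification of this approach.
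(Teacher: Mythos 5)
Your proof is correct, but it follows a genuinely different route from the paper's. You rewrite the $q$-Catalan number as the difference of two honest polynomials via $\frac{1}{[n+1]_q}\left[\begin{smallmatrix}2n\\n\end{smallmatrix}\right]_q=\left[\begin{smallmatrix}2n\\n\end{smallmatrix}\right]_q-q\left[\begin{smallmatrix}2n\\n+1\end{smallmatrix}\right]_q$ (which indeed follows from $[n+1]_q-q[n]_q=1$) and then evaluate each Gaussian binomial at the primitive $d$-th root of unity $\zeta=\om^e$ by the $q$-Lucas theorem; I checked all four case computations (the residues mod $d$, the vanishing of $\left[\begin{smallmatrix}0\\b_0\end{smallmatrix}\right]_\zeta$ for $b_0\ge1$, and the sign $-\zeta=1$ when $d=2$) and they are right. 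The paper instead keeps the quotient form, first determines when the evaluation is nonzero by counting multiplicities of cyclotomic factors in $[2n]_q!$ versus $[n]_q!\,[n+1]_q!$ via floor functions, and then computes the nonzero values by grouping the $q$-integers into ratios $[\alpha]_q/[\beta]_q$ with $\alpha\equiv\beta\pmod d$ and taking limits (their equations \eqref{grouped-2}, \eqref{grouped-d} and \eqref{eq:SM}). Your approach buys a cleaner, limit-free argument with no $0/0$ bookkeeping, at the price of importing the $q$-Lucas theorem as an external (though standard) tool. The paper's approach is self-contained modulo two elementary facts about $q$-integers, and --- more importantly for this article --- its ratio-grouping template applies uniformly to the later, messier cyclic sieving polynomials (Lemmas~\ref{lem:8}, \ref{lem:2}, \ref{lem:3}, etc.), which do not decompose as neatly into signed sums of Gaussian binomials.
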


\begin{remark}
If we write
$d=2n/e$, the second case can be described by ``$d=2$ with $n$ odd'', and the third one by ``$d\mid n$''. This is closer to the descriptions used later in the numerous variants of this first cyclic sieving phenomenon (see for instance Lemmas~\ref{lem:3} or~\ref{lem:9}). 
\end{remark}

\begin{proof}[Proof of Lemma~\ref{lem:1}]
The result is obvious for $e=0$, so let us assume $e>0$. We will use
two basic facts to evaluate the above $q$-Catalan polynomial at (some) roots of unity. Let $d\ge 2$ and let $\rho$ be a primitive $d$-th root of unity.
    Let $\alpha$, $\beta$ be two  positive integers. Then:
\begin{itemize}
\item $1-q^\alpha$ vanishes at $\rho$ if and only if $\alpha$ is a multiple of~$d$; that is,
\beq\label{qint0}
\lim_{q\to \rho} \  (1-q^\alpha) = 0 \quad \text{if and only if}\quad
\alpha \equiv0\ (\text{mod }d),
        \eeq
         and in this case $\rho$ is a simple root of this polynomial,
\item if $\alpha\equiv\beta\pmod d$, then 
\begin{equation} \label{eq:SM}
  \lim_{q\to \rho} \frac {[\alpha]_q}{[\beta]_q}=
  \lim_{q\to \rho} \frac {1-q^\al} {1-q^\be}=
\begin{cases}
\frac \alpha \beta,&\text{if }
\alpha\equiv\beta\equiv0\ (\text{mod } d),\\
1,&\text{otherwise}.
\end{cases}	
\end{equation}
\end{itemize}
We leave the proof of these two facts to the reader;
see also~\cite[Lem.~2.4]{SagaCS}.
The first point implies in particular that $\rho$ is a root of multiplicity $\lfloor \alpha/d\rfloor$ of $[\alpha]_q!$.

\smallskip

Now let $e$ be a divisor of~$2n$ with $e\in\llbracket 1, 2n-1\rrbracket$,
so that in fact $e\in\llbracket 1, n\rrbracket$. Then $\rho:= \om^e$ is a primitive $d$-th root of unity for $d=2n/e$, with $d\ge 2$.  The value of the $q$-Catalan polynomial at $\rho$ is non-zero if and only if there is an equal number of factors of the form $[\al]_q$ with $d \mid \al$ in the numerator $[2n]_q!$ and in the denominator $[n]_q![n+1]_q!$; that is, if and only if
  \[
    \left\lfloor \frac {2n}d\right\rfloor -  \left\lfloor \frac n d \right\rfloor -   \left\lfloor \frac {n+1} d \right\rfloor
    =
e - \left\lfloor \frac e 2 \right\rfloor-   \left\lfloor  \frac e 2+ \frac {1} d \right\rfloor
    =0.
  \]
   One readily checks that this holds if and only if $e$ is even or if $e$ is odd and equals $n$. This already proves the fourth case of the lemma.

 Let us now examine more closely the first two cases. If $e=n$ is odd (so that $\rh=-1$), we write the $q$-Catalan polynomial as
  \beq\label{grouped-2}
   \frac {1} {[n+1]_q}\begin{bmatrix} 2n\\n\end{bmatrix}_q
=\frac {1-q} {1-q^n} \cdot \frac{1-q^{2n}}{1-q^{n-1}}\cdot
\frac{1-q^{2n-1}}{1-q^{n-2}}  \cdots \frac{1-q^{n+2}}{1-q}.
\eeq
Now we apply~\eqref{eq:SM} to each of the above ratios and get
  \[
    \lim_{q\to \rho}    \frac {1} {[n+1]_q}\begin{bmatrix} 2n\\n\end{bmatrix}_q
    =\frac{2n}{n-1}\cdot \frac{2n-2}{n-3}  \cdots \frac{n+3}{2}.
  \]
  The second case of the lemma follows.

Finally, let $e$ be even and divide~$2n$.
Then, with $d=2n/e$, the number~$d$ divides~$n$.
We rewrite the $q$-Catalan polynomial as
  \beq\label{grouped-d}
    \frac {1} {[n+1]_q}\begin{bmatrix} 2n\\n\end{bmatrix}_q=
\frac{1-q^{2n}}{1-q^{n}}\cdot \frac{1-q^{2n-1}}{1-q^{n-1}}
\cdots \frac{1-q^{n+2}}{1-q^2}.
\eeq
     We now apply~\eqref{eq:SM} to each of the above ratios and obtain
     \begin{align*}
      \lim_{q\to \rho}    \frac {1} {[n+1]_q}\begin{bmatrix} 2n\\n\end{bmatrix}_q
       &=\frac{2n}{n} \cdot \frac{2n-d}{n-d}\cdots \frac {n+d}{d}
       \\
       &= \frac{e}{e/2} \cdot \frac{e-1}{e/2-1} \cdots \frac {e/2+1}{1},
     \end{align*}
     which proves the third case of the lemma.
   \end{proof}

     \begin{remark}
       We have chosen to detail a typical proof of the evaluation of a polynomial involving $q$-factorials at roots of unity, but the $q$-Lucas theorem often offers an interesting shortcut for such evaluations~\cite[Thm.~2.2]{sagan-congruences}. This theorem states that if $n=dn_1 + n_0$ and $k=dk_1+k_0$ with $0\le n_0, k_0<d$, then
       \[
         \qbinom{n}{k}_q = \binom{n_1}{k_1} \qbinom{n_0}{k_0}_q \mod \Phi_d,
         \]
         where $\Phi_d(q)$ denotes the $d$-th cyclotomic polynomial in $q$. Recall that the roots of this polynomial are the primitive $d$-th roots of unity. Completed with the fact that $[n]_q=[n_0]_q$ mod $\Phi_d$, this gives, for instance when $d=2$ and $n$ is odd:
         \begin{align*}
  \frac {1} {[n+1]_q}\begin{bmatrix} 2n\\n\end{bmatrix}_q
  \Bigg\vert_{q=-1}
  &=  \frac {1} {[2n+1]_q}\begin{bmatrix} 2n+1\\n\end{bmatrix}_q
  \Bigg\vert_{q=-1}
 \\ & =\frac 1 {[1]_q} \binom{n}{\frac{n-1}2} \begin{bmatrix} 1\\1\end{bmatrix}_q
  \Bigg\vert_{q=-1} = \binom{n}{\frac{n-1}2}.
        \end{align*}
        \end{remark}

\subsection{Trees with prescribed number of nodes and leaves}
Next we turn to the subset $\T(n,k)$ of~$\T(n)$, consisting of
all rooted plane trees with~$n$~edges and $k$~leaves.
Figure~\ref{fig:example} illustrates the cases $n=3$, $k=3$ and $n=3, k=2$.

\begin{theorem} \label{thm:ord_leaves}
Let $n$ and $k$ be positive integers. The cyclic group of order~$2n$ acts on $\T(n,k)$ by the rotation~$R$, and the triple
\[\left(\T(n,k),\langle R\rangle,\frac {1+q^{n}}
{[n-1]_q}
\begin{bmatrix} n-1\\k-2\end{bmatrix}_q
\begin{bmatrix} n\\k\end{bmatrix}_q\right)
\]
exhibits the cyclic sieving phenomenon.
\end{theorem}

\begin{example}
The above theorem gives the polynomial $1+q^3$ for $\T(3,3)$ and $1+q^2+q^4$ for $\T(3,2)$. One can indeed check the cyclic sieving phenomenon with Figure~\ref{fig:example}.
  \end{example}

\begin{remark}
  We explain in Subsection~\ref{sec:matchings} how Theorem~\ref{thm:ord} (dealing with $\T(n)$) can be derived from the above
  theorem by summation over~$k$.
The argument is borrowed from~\cite{Alexandersson}, where an equivalent cyclic sieving phenomenon is described in terms of non-crossing perfect matchings.
\end{remark}

\begin{proof}[Proof of Theorem~\ref{thm:ord_leaves}]
Polynomiality and non-negativity of coefficients of the cyclic
sieving polynomial are proved in Lemma~\ref{lem:6P} below.

Let $\om$ be a primitive $2n$-th root of unity.
We have to prove 
\begin{equation} \label{eq:12}
\Fix_{R^e}(\T(n,k))=
\frac {1+q^{n}} {[n-1]_q}
\begin{bmatrix} n-1\\k-2\end{bmatrix}_q
\begin{bmatrix} n\\k\end{bmatrix}_q
\Bigg\vert_{q=\om^ e}
\end{equation}
for all integers $e$. 
By Lemma~\ref{lem:divisors},
it  suffices to prove this for values of $e\in \llbracket 0, 2n-1\rrbracket$ equal to $0$ or dividing $2n$. Lemma~\ref{lem:8} below  gives the corresponding values of the right-hand side of~\eqref{eq:12}.

\medskip
We begin with $e=0$.  All trees in $\T(n,k)$ are invariant under  $R^0=\text{id}$. 
By~\eqref{eq:T(n,k)} we know the cardinality
of~$\T(n,k)$, which coincides  with the value at $q=1$ of the right-hand side of~\eqref{eq:12}.

\medskip
We now assume that $0<e<2n$, with $e$ a divisor of $2n$. Again, we rely on Proposition~\ref{prop:structure}, which describes the structure of trees fixed by~$R^e$. We revisit the proof of Theorem~\ref{thm:ord} by keeping track of the number of leaves.

\medskip
If $e$ is odd and equals $n$, the map $\Phi$ sends
trees of~$\T(n,k)$ fixed by~$R^e$ bijectively
to trees of $\T\!\left(\frac {n+1} 2,1+ \frac k 2\right)$ with a marked non-root leaf. In particular, $k$ must be even. According to~\eqref{eq:T(n,k)}, with $n$ replaced by~$\frac {n+1} {2}$
and~$k$ replaced by~$\frac {k} {2}+1$, there are
   \[
  \left( \tfrac k 2+1\right) \frac 2 {\frac {n+1} 2 -1} \binom{\frac {n+1} 2 -1}{\frac k 2 -1}  \binom{\frac {n+1} 2 }{\frac k 2 +1} 
  \]
ways of choosing a tree of $\T\!\left(\frac {n+1} 2,1+ \frac k 2\right)$ with a marked leaf.
We must however subtract the number of those trees where the
marked leaf is the root. By~\eqref{eq:Tl(n,k)}
with $n$ replaced by~$\frac {n+1} {2}$
and~$k$ replaced by~$\frac {k}
{2}+1$, this number equals
\[
\frac {1} {\frac {n+1} {2}-1}
\binom {\frac {n+1} {2}-1}{\frac {k} {2}-1}
\binom {\frac {n+1} {2}-1}{\frac {k} {2}}.
\]
The difference of these two numbers simplifies to
\[
\frac {2\frac {n+1} {2}-1} {\frac {n+1} {2}-1}
\binom {\frac {n+1} {2}-1}{\frac {k} {2}-1}
\binom {\frac {n+1} {2}-1}{\frac {k} {2}},
\]
which is equivalent to the expression in the second alternative
of~\eqref{eq:11}, and, hence, proves \eqref{eq:12} in this case.

\medskip 
Finally, if $e$ is even with $e=2n/d$, the map $\Psi_d$  sends
trees of $\T(n,k)$ fixed by~$R^e$ bijectively 
to trees of $\T(e/2)$ having
  \begin{itemize}
  \item either $\frac kd$ leaves and a marked node that is not a leaf,
    \item or $1+\frac kd$ leaves, one of which is  marked.
  \end{itemize}
 The second case occurs for trees of $\T(n,k)$ (fixed by~$R^e$) in which the centre is a node of degree exactly~$d$. Observe that in all cases  $d$ must divide $k$. According to~\eqref{eq:T(n,k)}, there are
\[
\left(\tfrac {e} {2}+1-\tfrac {k} {d}\right)
\cdot
\frac {2} {\frac {e} {2}-1}
\binom {\frac {e} {2}-1}{\frac {k} {d}-2}
\binom {\frac {e} {2}}{\frac {k} {d}}
\]
trees of the first type, and
\[
\left(\tfrac {k} {d}+1\right)\cdot
\frac {2} {\frac {e} {2}-1}
\binom {\frac {e} {2}-1}{\frac {k} {d}-1}
\binom {\frac {e} {2}}{\frac {k} {d}+1}
\]
trees of the second type.
The sum of these two numbers is equal to the expression in
the third alternative 
of~\eqref{eq:11}, and is thus proving~\eqref{eq:12} also in this case.

\medskip
For all other triples $(n,k,e)$,  no tree of $\T(n,k)$ is fixed by~$R^e$. This proves \eqref{eq:12} in the last alternative of~\eqref{eq:11}.

\medskip
  The proof of the theorem is now complete (assuming Lemmas~\ref{lem:6P} and~\ref{lem:8} below).
\end{proof}

\begin{lemma} \label{lem:6P}
Let $n$ and $k$ be positive integers. Then the expression
\begin{equation} \label{eq:N(n,k)3} 
\frac {1+q^{n}} {[n-1]_q}
\begin{bmatrix} n-1\\k-2\end{bmatrix}_q
\begin{bmatrix} n\\k\end{bmatrix}_q
\end{equation}
is a polynomial in $q$ with non-negative coefficients.
\end{lemma}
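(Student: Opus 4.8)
The plan is to first reduce the expression \eqref{eq:N(n,k)3} to a single $q$-Narayana polynomial times an explicit rational prefactor, and only then to treat polynomiality and non-negativity. Using $\begin{bmatrix}n\\k\end{bmatrix}_q=\frac{[n]_q}{[k]_q}\begin{bmatrix}n-1\\k-1\end{bmatrix}_q$ together with $(1+q^n)[n]_q=[2n]_q$, I would rewrite
\[
\frac{1+q^{n}}{[n-1]_q}\begin{bmatrix}n-1\\k-2\end{bmatrix}_q\begin{bmatrix}n\\k\end{bmatrix}_q
=\frac{[2n]_q}{[k]_q}\,N_q(n-1,k-1),
\qquad
N_q(m,j):=\frac{1}{[m]_q}\begin{bmatrix}m\\j\end{bmatrix}_q\begin{bmatrix}m\\j-1\end{bmatrix}_q ,
\]
where $N_q(m,j)$ is the $q$-Narayana polynomial, classically known to have non-negative integer coefficients (MacMahon; see also the references in the proof of Theorem~\ref{thm:ord}). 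This identity concentrates the entire difficulty in the prefactor $[2n]_q/[k]_q$, which is \emph{not} a polynomial in general.

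For polynomiality I would argue with cyclotomic polynomials. Writing $[m]_q=\prod_{d\mid m,\,d\ge 2}\Phi_d(q)$, the three numerator factors $\begin{bmatrix}n-1\\k-2\end{bmatrix}_q$, $\begin{bmatrix}n\\k\end{bmatrix}_q$ and $1+q^n$ in \eqref{eq:N(n,k)3} are already polynomials, so the only possible obstruction is the denominator $[n-1]_q$; it therefore suffices to show that $\Phi_d$ divides the numerator whenever $d\ge 2$ and $d\mid n-1$, i.e.\ $n\equiv 1\pmod d$. If $d=2$ then $n$ is odd, so $1+q^n$ is divisible by $1+q=\Phi_2$ and there is nothing more to do. If $d\ge 3$, I would use that the multiplicity of $\Phi_d$ in $\begin{bmatrix}m\\j\end{bmatrix}_q$ equals $\lfloor m/d\rfloor-\lfloor j/d\rfloor-\lfloor (m-j)/d\rfloor\in\{0,1\}$, which is $1$ exactly when $(j\bmod d)+((m-j)\bmod d)\ge d$. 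A one-line residue computation using $n\equiv 1\pmod d$ then shows that $\Phi_d\mid\begin{bmatrix}n-1\\k-2\end{bmatrix}_q$ unless $k\equiv 2\pmod d$, while $\Phi_d\mid\begin{bmatrix}n\\k\end{bmatrix}_q$ as soon as $k\not\equiv 0,1\pmod d$. Since $2\not\equiv 0,1\pmod d$ for $d\ge 3$, at least one of the two Gaussian binomials supplies the required factor $\Phi_d$ in every case, which proves polynomiality.

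Non-negativity is the step I expect to be the main obstacle, precisely because $[2n]_q/[k]_q$ is not a polynomial. One sub-case is immediate: when $k\mid 2n$ the prefactor equals $[2n/k]_{q^k}$, so \eqref{eq:N(n,k)3} is a product of two polynomials with non-negative coefficients. For the general case one cannot simply multiply, and a finer argument is needed. I would first record the conceptual decomposition obtained from $[2n]_q=[k]_q+q^{k}[2n-k]_q$,
\[
\frac{[2n]_q}{[k]_q}\,N_q(n-1,k-1)=N_q(n-1,k-1)+q^{k}\,\frac{[2n-k]_q}{[n]_q\,[n-1]_q}\begin{bmatrix}n-1\\k-2\end{bmatrix}_q\begin{bmatrix}n\\k\end{bmatrix}_q ,
\]
whose two summands are the natural $q$-analogues of $|\T_l(n,k)|$ and $|\T_i(n,k)|$ from \eqref{eq:Tl(n,k)} and \eqref{eq:Ti(n,k)}: the first is non-negative by the $q$-Narayana result, so everything reduces to the non-negativity of the second (internal-rooted) term.

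To finish, and to avoid any circularity with the cyclic sieving phenomenon itself, I would establish non-negativity combinatorially rather than via the group action. The plan is to realize \eqref{eq:N(n,k)3} as a genuine generating function $\sum_{\tau\in\T(n,k)}q^{\operatorname{st}(\tau)}$: using the deletion bijection behind \eqref{eq:Tl(n,k)}, a leaf-rooted tree of $\T_l(n,k)$ carries the $q$-Narayana statistic (the major index of the contour Dyck path of the leaf-deleted tree, which has $k-1$ peaks), and each such tree gives rise to $2n/k$ corner-rootings; the statistic $\operatorname{st}$ is then built by adding to the Narayana part a ``rooting shift'' recording the position of the root corner. The hard and delicate point will be to choose the shift so that summing it over the rootings reproduces the prefactor $[2n]_q/[k]_q$ fibered over orbits with no cancellation. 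This is exactly the content of the known evaluation stated in the equivalent language of non-crossing perfect matchings in~\cite{Alexandersson}, which can either be invoked directly or transported to trees to give the non-negativity of \eqref{eq:N(n,k)3}.
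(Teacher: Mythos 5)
Your treatment of polynomiality is correct and complete, and it takes a genuinely different (and arguably more economical) route than the paper: since $[n-1]_q$ is squarefree as a product of cyclotomic polynomials, you only need one factor $\Phi_d$ in the numerator for each $d\ge 2$ dividing $n-1$, and your residue analysis of the two Gaussian binomials (together with $1+q^n$ for $d=2$) delivers exactly that. The paper instead computes the full exponent $e_d$ of every $\Phi_d$ in the product $\frac{[2n]_q}{[n]_q[n-1]_q}\left[\begin{smallmatrix}n-1\\k-2\end{smallmatrix}\right]_q\left[\begin{smallmatrix}n\\k\end{smallmatrix}\right]_q$ via fractional parts and shows $e_d\ge 0$ for all $d$; this is more work but is then reused in Lemma~\ref{lem:8} to decide exactly when $e_d=0$, i.e.\ to locate the roots of unity at which the polynomial does not vanish. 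Your localized argument proves the lemma but would not feed into that later computation. Your identity $\frac{1+q^n}{[n-1]_q}\left[\begin{smallmatrix}n-1\\k-2\end{smallmatrix}\right]_q\left[\begin{smallmatrix}n\\k\end{smallmatrix}\right]_q=\frac{[2n]_q}{[k]_q}N_q(n-1,k-1)$ and the decomposition via $[2n]_q=[k]_q+q^k[2n-k]_q$ are also both correct, and the second summand is indeed the expression of Lemma~\ref{lem:3P}.

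For non-negativity, however, there is a genuine gap. After the (correct) special case $k\mid 2n$ and the reduction to the internal-rooted term, your self-contained plan is to realize \eqref{eq:N(n,k)3} as $\sum_\tau q^{\operatorname{st}(\tau)}$ for an explicit statistic obtained by adding a ``rooting shift'' to the $q$-Narayana statistic; but you never construct this shift, and you yourself flag its construction as ``the hard and delicate point.'' That construction \emph{is} the proof, so as written nothing is established. The fallback of citing \cite{Alexandersson} could close the gap in principle (their cyclic sieving polynomial differs from \eqref{eq:N(n,k)3} by a factor $q^{k(k-2)}$, which is harmless for non-negativity), but you neither verify that the cited statistic produces this exact polynomial nor carry out the transport to trees, so the proposal does not yet contain a proof. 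The missing idea, which is how the paper actually argues, is entirely elementary: $\left[\begin{smallmatrix}n-2\\k-2\end{smallmatrix}\right]_q\left[\begin{smallmatrix}n+1\\k\end{smallmatrix}\right]_q$ is reciprocal and unimodal, so $(1-q)$ times it has non-negative coefficients up to the middle degree; multiplying by the non-negative power series $\frac{1+q^n}{1-q^{n+1}}$ preserves this for the first half of the coefficients, the result is identically equal to \eqref{eq:N(n,k)3}, and reciprocity of that polynomial then forces the second half of the coefficients to be non-negative as well. No combinatorial statistic is needed.
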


\begin{proof}
We start by recalling the well-known fact that
\[
[n]_q= \frac{q^n-1}{q-1} =\prod _{d\ge 2,\  d\mid n} ^{}\Phi_d(q),
\]
where $\Phi_d(q)$ denotes the $d$-th cyclotomic polynomial in $q$.
Consequently,
\begin{equation} \label{eq:1+q^{n-1}3} 
\frac {1+q^{n}} {[n-1]_q}
\begin{bmatrix} n-1\\k-2\end{bmatrix}_q
\begin{bmatrix} n\\k\end{bmatrix}_q
=\frac {[2n]_q} {[n]_q\,[n-1]_q}
\begin{bmatrix} n-1\\k-2\end{bmatrix}_q
\begin{bmatrix} n\\k\end{bmatrix}_q
=\prod _{d\ge 2} \Phi_d(q)^{e_d}, 
\end{equation}
with
\begin{multline} \label{eq:e_d} 
e_d=\chi\big(d\mid 2n\big)-\chi\big(d\mid n\big)
+\fl{\frac {n-2} {d}}+\fl{\frac {n} {d}}\\
-\fl{\frac {k-2} {d}}-\fl{\frac {k} {d}}
-\fl{\frac {n-k+1} {d}}-\fl{\frac {n-k} {d}},
\end{multline}
where $\chi(\mathcal A)=1$ if $\mathcal A$ is true and $\chi(\mathcal
A)=0$ otherwise.
Note that $e_d$ is the multiplicity of a primitive $d$-th root of unity as a root of~\eqref{eq:1+q^{n-1}3} (where a negative multiplicity would indicate a pole). We want to prove that $e_d \ge 0$ for all $d\ge 2$.

From now on, let $d\ge2$.
Abusing notation,
let us write $N=\{(n-2)/d\}$
and $K=\{(k-2)/d\}$, where $\{\al\}:=\al-\fl{\al}$ denotes
the fractional part of~$\al$. Note that $dN$ and $dK$ are integers. We use $N$ and $K$ to rewrite all terms of~\eqref{eq:e_d} with the
integer parts (``floors'') of $(n-2)/d$ and $(k-2)/d$, plus $N$ and $K$. For instance, we have
  \[
    \fl{\frac {n-k+1} {d}} = \fl{\frac{n-2} d - \frac {k-2} d + \frac 1 d}
    =\fl{\frac{n-2} d} - \fl{\frac {k-2} d} + \fl{N-K+ \frac 1 d}.
    \]
  We thus may rewrite Equation~\eqref{eq:e_d} as 
  \begin{equation}\label{eq:e_d2}
e_d
=\chi\big(d\mid 2n\big)-\chi\big(d\mid n\big)
+\fl{N+\frac {2} {d}}
-\fl{K+\frac {2} {d}}
-\fl{N-K+\frac {1} {d}}-\fl{N-K}.
\end{equation}

For the case where $d=2$, this reduces to
\begin{align*} 
\notag
e_2&=1-\chi\big(2\mid n\big)
+\fl{N+1}-\fl{K+1}
-\fl{N-K+\frac {1} {2}}-\fl{N-K}\\
&=1-\chi\big(2\mid n\big)
-\fl{N-K+\frac {1} {2}}
-\fl{N-K},
\end{align*}
because $\fl{N}=\fl{K}=0$.
Note that $N$ and $K$ can only take the values $0$ and $1/2$ in
the current case, depending on whether $n$ and $k$ are even or odd. One checks that the exponent~$e_2$ is non-negative in all four cases. More precisely, $e_2=0$ if~$k$ is even, and $e_2=1$ otherwise.

Now let $d>2$. By~\eqref{eq:e_d2}, the exponent $e_d$ may be bounded
from below as follows: 
\begin{equation} \label{eq:e_d3} 
e_d\ge\fl{N+\frac {2} {d}}
-\fl{K+\frac {2} {d}}
-\fl{N-K+\frac {1} {d}}-\fl{N-K}.
\end{equation}
Given that $N-K<1$, the only terms that can be negative are
$-\fl{K+\frac {2} {d}}$ and $-\fl{N-K+\frac {1} {d}}$, and if they are
then they are equal to~$-1$. 
If $N<K$, then the last term on the right-hand side
equals~$+1$. This is already
sufficient to guarantee non-negativity of the right-hand side
of~\eqref{eq:e_d3} since under the condition $N<K$ the summand
$-\fl{N-K+\frac {1} {d}}$ cannot be negative.
Consequently $e_d$ is non-negative in this case.
We may therefore assume from now on that $N\ge K$. 

We have $-\fl{N-K+\frac {1} {d}}=-1$ if and only if $N=1-\frac {1}
{d}$ and $K=0$.
The inequality~\eqref{eq:e_d3} then simplifies to
$e_d\ge1-0-1-0=0$, which indeed implies non-negativity of~$e_d$.

On the other hand, we have $-\fl{K+\frac {2} {d}}=-1$ if and only if
$K=1-\frac {2} {d}$ or $K=1-\frac {1} {d}$. However, by our assumption
$N\ge K$, this forces $\fl{N+\frac {2} {d}}$ to equal~$+1$.
Consequently, the
inequality~\eqref{eq:e_d3} simplifies to
$e_d\ge1-1-0-0=0$, again implying non-negativity of~$e_d$.

This proves the polynomiality part of the lemma.

\medskip
We will now show that the coefficients of the polynomial
in~\eqref{eq:N(n,k)3} are non-negative.
In order to do this,
we largely follow an argument that one finds in
\cite[Thm.~2]{AndrCB} and \cite[Prop.~10.1(iii)]{ReSWAA}.
First, as an easy corollary of the classical result that
$q$-binomial coefficients are reciprocal\footnote{A polynomial
$P(q)$ in $q$ of degree~$m$ is called {\it reciprocal\/} if
$P(q)=q^mP(1/q)$.}
and unimodal\footnote{A polynomial $P(q)=
\sum _{i=0} ^{m}p_iq^i$ is called {\it unimodal\/} if there is an
integer $r$ with $0\le r\le m$ and $0\le p_0\le\dots\le p_r\ge\dots\ge
p_m\ge0$. It is well-known that $q$-binomial coefficients are
unimodal; see \cite[Ex.~7.75.d]{StanBI}.} polynomials, and the fact
that the product of reciprocal and
unimodal polynomials is also reciprocal and unimodal
(cf.\ \cite[Thm.~3.9]{AndrAF}), one obtains
that the product
\[
\begin{bmatrix} n-2\\k-2\end{bmatrix}_q
\begin{bmatrix} n+1\\k\end{bmatrix}_q
\]
is a reciprocal and unimodal polynomial in~$q$.  It is of degree~$A$,
say, where $A=(k-2)(n-k)+k(n-k+1)$ (the explicit value of~$A$
being however irrelevant).
Hence, the coefficient of~$q^i$ in
\[
(1-q)\begin{bmatrix} n-2\\k-2\end{bmatrix}_q
\begin{bmatrix} n+1\\k\end{bmatrix}_q
\]
is non-negative for $0\le i\le\cl{A/2}$.
Therefore the same must be true for 
\[
\frac {1+q^{n}} {1-q^{n+1}}
\times(1-q)\begin{bmatrix} n-2\\k-2\end{bmatrix}_q
\begin{bmatrix} n+1\\k\end{bmatrix}_q
=
\frac {1+q^{n}} {[n-1]_q}\begin{bmatrix} n-1\\k-2\end{bmatrix}_q
\begin{bmatrix} n\\k\end{bmatrix}_q,
\]
which is a priori a \emm formal power series, in~$q$.
However, we know from above that this expression is
actually a polynomial.
Since it is reciprocal
and has degree~$A$, we infer that also the other coefficients of the polynomial must be non-negative. 

\medskip
This completes the proof of the lemma.
\end{proof}

\begin{lemma} \label{lem:8}
Let $\om$ be a primitive $2n$-th root of unity, and let $e$ be a
non-negative integer $<2n$  equal to~$0$ or dividing~$2n$.
In the latter case, write $d:=2n/e$, so that $d\ge 2$.
Then we have
\begin{multline} \label{eq:11} 
\frac {1+q^{n}} {[n-1]_q}
\begin{bmatrix} n-1\\k-2\end{bmatrix}_q
\begin{bmatrix} n\\k\end{bmatrix}_q
\Bigg\vert_{q=\om^e}\\
=
\begin{cases}
\frac {2} {n-1}\binom {n-1}{k-2}\binom {n}{k},&\text{if }e=0,\\[.5em]
\frac {n} {\frac {n+1} {2}-1}
\left(\begin{smallmatrix} {\frac {n+1} {2}-1}\\{\frac {k} {2}-1}
\end{smallmatrix}\right)
\left(\begin{smallmatrix} {\frac {n+1} {2}-1}\\{\frac {k} {2}}
\end{smallmatrix}\right),
&\text{if }
d=2 \text{ with  $n+1$ and $k$ even},\\[.5em]
2
\left(\begin{smallmatrix} {\frac {e} {2}-1}\\{\frac {k} { d}-1}
\end{smallmatrix}\right)
\left(\begin{smallmatrix} {\frac {e} {2}}\\{\frac {k} { d}}
  \end{smallmatrix}\right),
&\text{if $d\mid n$
  and }  d\mid k,\\
0,&\text{otherwise.}
\end{cases}
\end{multline}
\end{lemma}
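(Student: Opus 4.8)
The plan is to follow the pattern of Lemma~\ref{lem:1}, reducing everything to the two elementary facts~\eqref{qint0} and~\eqref{eq:SM}. The case $e=0$ is immediate: the value at $q=1$ is $|\T(n,k)|$, which is the first alternative of~\eqref{eq:11} by~\eqref{eq:T(n,k)}. So I fix a positive divisor $e$ of $2n$ with $e<2n$, set $\rho:=\om^e$, and record that $\rho$ is a primitive $d$-th root of unity with $d=2n/e\ge 2$. Using $1+q^n=[2n]_q/[n]_q$, I would write the cyclic sieving polynomial as
\[
P(q)=\frac{[2n]_q}{[n]_q\,[n-1]_q}\begin{bmatrix} n-1\\k-2\end{bmatrix}_q\begin{bmatrix} n\\k\end{bmatrix}_q,
\]
and evaluate the prefactor and the two $q$-binomials at $\rho$ separately.

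For each $q$-binomial I would group its $q$-integer factors according to their residues modulo $d$ and apply~\eqref{eq:SM}; this is exactly the $q$-Lucas evaluation
\[
\begin{bmatrix} a\\b\end{bmatrix}_q\Bigg\vert_{q=\rho}=\binom{\fl{a/d}}{\fl{b/d}}\begin{bmatrix} a\bmod d\\ b\bmod d\end{bmatrix}_\rho,
\]
where the residual low-order $q$-binomial is a nonzero complex number when $a\bmod d\ge b\bmod d$ and vanishes otherwise. For the prefactor I would use that $d\mid 2n$ always forces $[2n]_\rho=0$, and then split according to the parity of $e$ (equivalently, whether $d\mid n$): if $e$ is even then $d\mid n$, so $[2n]_q/[n]_q\to 2$ by~\eqref{eq:SM} while $[n-1]_\rho=-\rho^{-1}$, giving prefactor $-2\rho$; if $e=n$ is odd then $d=2$, $\rho=-1$, $[n]_{-1}=1$ and $[2n]_q/[n-1]_q\to 2n/(n-1)$, giving prefactor $2n/(n-1)$; and if $e$ is odd with $e<n$ then $d>2$ is even with $n\equiv d/2\pmod d$, so $[2n]_\rho=0$ is left uncompensated (as $[n]_\rho\ne0$ and $[n-1]_\rho\ne0$) and the prefactor vanishes, immediately giving the fourth (``otherwise'') case of~\eqref{eq:11}.

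It then remains to assemble the surviving cases and read off when a residual $q$-binomial vanishes. When $e$ is even the prefactor is the finite nonzero number $-2\rho$, so $P(\rho)=0$ unless $d\mid k$ (otherwise $\qbinom{n}{k}_\rho=0$); when $d\mid k$ one gets $\qbinom{n}{k}_\rho=\binom{n/d}{k/d}$ and $\qbinom{n-1}{k-2}_\rho=-\rho^{-1}\binom{n/d-1}{k/d-1}$, the phases cancel as $(-2\rho)(-\rho^{-1})=2$, and using $n/d=e/2$ this produces the third alternative. When $e=n$ is odd the prefactor $2n/(n-1)$ is real, and $P(-1)=0$ unless $k$ is even (otherwise $\qbinom{n-1}{k-2}_{-1}=0$); for $k$ even the two $q$-binomials contribute $\binom{(n-1)/2}{k/2}$ and $\binom{(n-1)/2}{k/2-1}$, producing the second alternative after rewriting $(n+1)/2-1=(n-1)/2$.

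The main obstacle I anticipate is the phase bookkeeping together with pinning down exactly which residues of $n$ and $k$ modulo $d$ make $P(\rho)$ nonzero. The delicate point is that the prefactor $\frac{1+q^n}{[n-1]_q}$ has its own root-of-unity behaviour --- sometimes a genuine phase $\rho^{\pm1}$, sometimes a real ratio, sometimes a zero --- and this must be combined consistently with the phases of the two residual low-order $q$-binomials so that they cancel to leave the advertised positive rationals. Verifying that the three vanishing mechanisms (prefactor zero for odd $e<n$; $\qbinom{n}{k}_\rho=0$ for even $e$ with $d\nmid k$; $\qbinom{n-1}{k-2}_\rho=0$ for $e=n$ with $k$ odd) exhaust precisely the complement of the first three cases is the crux of the argument.
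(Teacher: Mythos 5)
Your proof is correct. I checked the three prefactor evaluations ($-2\rho$ for $e$ even, since $[2n]_q/[n]_q\to 2$ and $[n-1]_\rho=-\rho^{-1}$; $2n/(n-1)$ for $e=n$ odd; $0$ for $e$ odd with $e<n$, since then $n\equiv d/2\pmod d$ with $d>2$ so that neither $[n]_\rho$ nor $[n-1]_\rho$ compensates the zero of $[2n]_\rho$), the residual factors $\qbinom{d-1}{d-2}_\rho=-\rho^{-1}$ and $\qbinom{0}{0}_\rho=\qbinom{1}{0}_\rho=1$, and the phase cancellations; everything matches the four alternatives of \eqref{eq:11}, including the correct splitting of $d=2$ between the second alternative ($n$ odd) and the third ($n$ even). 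The route does differ from the paper's in how the vanishing set is isolated. The paper recycles the cyclotomic-exponent computation of Lemma~\ref{lem:6P}: it shows that the multiplicity $e_d$ of a primitive $d$-th root of unity, already proved non-negative there, vanishes exactly in the second and third alternatives, and then only sketches the evaluation via \eqref{eq:SM} by pairing $q$-integers that agree modulo~$d$. You instead evaluate everything at once through the $q$-Lucas factorization together with a separate analysis of the prefactor $[2n]_q/([n]_q[n-1]_q)$, so the vanishing falls out of a residual low-order $q$-binomial (or the prefactor) being zero rather than from a floor-function count. Your version is more self-contained and makes explicit the ``easy task'' the paper leaves to the reader, at the cost of the phase bookkeeping you flag; the paper's version gets the vanishing set essentially for free from work already done for polynomiality. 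Both ultimately rest on the same elementary facts \eqref{qint0} and \eqref{eq:SM}.
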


\begin{proof}
The case $e=0$ being obvious, we assume that
$e$ is a positive divisor of~$2n$ with $e<2n$.
Then $\om^e$ is a primitive $d$-th root of unity, with $d=2n/e$. Its minimal polynomial is the cyclotomic polynomial $\Phi_d(q)$, as defined in the proof of Lemma~\ref{lem:6P} just above.

  We will first determine for which values of $d$ the specialisation on the left-hand side of~\eqref{eq:11} is non-zero. This will settle the fourth case in~\eqref{eq:11}. We return to the proof of Lemma~\ref{lem:6P}, and use the same notation $N, K, e_d$. The specialisation of~\eqref{eq:11} at a primitive $d$-th root of unity is non-zero if and only if the exponent $e_d$ is zero. In the proof of Lemma~\ref{lem:6P} we proved that $e_d\ge 0$ for all $d$, now we revisit this proof to decide when $e_d=0$.

  For  $d=2$, we already established that $e_d=0$ if and only if $k$ is even. This corresponds to the second case of~\eqref{eq:11} when $n$ is odd, and to the third case of~\eqref{eq:11} (with $d=2$, $e=n$) when $n$ is even.

For $d>2$,
a comparison of~\eqref{eq:e_d2} and~\eqref{eq:e_d3} tells us that $e_d=0$ if and only if $d \mid n$ (that is, $e$ is even, or equivalently $N=\{ (n-2)/d \}=1-\frac 2d$) and the right-hand side of~\eqref{eq:e_d3} vanishes. So let us focus on
the right-hand side of~\eqref{eq:e_d3}.
If $N<K$, that is, $K=1-\frac 1d$, then this expression is $1 - 1 -0 - (-1)=1 >0$, so $e_d>0$. If $K\le N=1-\frac 2d$, then this expression is
  \[
    1  - \fl {K+ \frac 2 d} -0 -0 .
  \]
  It equals $0$ if and only if $K=N=1-\frac 2d$, or equivalently $d \mid k$ (since $K= \{ (k-2)/d \}$). This corresponds to the third case of~\eqref{eq:11} with $d>2$.

  \medskip
  It remains to evaluate the left-hand side of~\eqref{eq:11} in the second and third case. This is an easy task based on the simple fact~\eqref{eq:SM}, as in the proof of Lemma~\ref{lem:1}. The key idea, again, is to form ratios of terms of the form $1-q^a$ where the exponents of~$q$ differ by a multiple of~$d$, as in~\eqref{grouped-2} and~\eqref{grouped-d}.
\end{proof}

\section{External tree rotation}
\label{sec:external}

In this section, we consider the set $\T_l(n,k)$
of all rooted plane trees with $n$~edges
and~$k$~leaves, where the root node is one of the leaves.
The cardinality of $\T_l(n,k)$ is given in~\eqref{eq:Tl(n,k)}. The external rotation~$R_l$ defined in Section~\ref{sec:rotations} acts on these trees by moving the root to the next leaf found in counterclockwise order around the tree. 

\begin{theorem} \label{thm:ext}
Let $n$ and $k$ be positive integers. The cyclic group of order~$k$ acts on $\T_l(n,k)$ by the rotation~$R_l$, and  the triple
\[\left(\T_l(n,k),\langle R_l\rangle,\frac {1}
{[n-1]_q}
\begin{bmatrix} n-1\\k-2\end{bmatrix}_q
\begin{bmatrix} n-1\\k-1\end{bmatrix}_q\right)
\]
exhibits the cyclic sieving phenomenon.
\end{theorem}

\begin{example}
The sets $\T_l(3,3)$ and $\T_l(3,2)$ are singletons (see Figure~\ref{fig:example}), and the above theorem gives the polynomial $P(q)=1$ in both cases.
  \end{example}

\begin{proof}[Proof of Theorem \ref{thm:ext}]
Polynomiality and non-negativity of coefficients of the cyclic
sieving polynomial are proved in Lemma~\ref{lem:2P} below.

Let $\om$ be a primitive $k$-th root of unity.
We have to prove 
\begin{equation} \label{eq:4}
\Fix_{R_l^e}(\T_l(n,k))=
\frac {1} {[n-1]_q}
\begin{bmatrix} n-1\\k-2\end{bmatrix}_q
\begin{bmatrix} n-1\\k-1\end{bmatrix}_q
\Bigg\vert_{q=\om^ e}
\end{equation}
for all integers $e$.
By Lemma~\ref{lem:divisors}, it suffices to prove this for values of $e\in \llbracket 0, k-1\rrbracket$ equal to $0$ or dividing $k$.
Lemma~\ref{lem:2} below gives the corresponding  values of the right-hand side of~\eqref{eq:4}.

\medskip
We begin with $e=0$. Clearly, all trees in $\T_l(n,k)$ are invariant under
$R_l^0=\text{id}$. By~\eqref{eq:Tl(n,k)} we know the cardinality
of~$\T_l(n,k)$, which coincides with the value at $q=1$ of the right-hand side of~\eqref{eq:4}.

We now assume that $0<e<k$, with $e$ a divisor of $k$. We rely in what follows
on Proposition~\ref{prop:RRR}, which relates the rotations~$R_l$ and~$R$, and on Proposition~\ref{prop:structure}, which characterises trees fixed by a given power of~$R$. Recall in particular that the only possible values of~$e$ are $e=k/2$ with $n+1$ and $k$  even (corresponding to $f=n$ in the notation of Proposition~\ref{prop:RRR}), and those
for which $k/e$ divides~$n$ (corresponding to even values of~$f$ in the notation  of Proposition~\ref{prop:RRR}).

\medskip
If $e=k/2$, with $k$ and $n+1$ even, Proposition~\ref{prop:RRR} tells us that we just have to  count trees of $\T_l(n,k)$ fixed by~$R^{n}$. By Proposition~\ref{prop:structure}, the map~$\Phi$ sends them bijectively to trees of $\T_l(\frac {n+1} 2, 1+\frac k 2)$ having a marked non-root leaf. 
By~\eqref{eq:Tl(n,k)} with $n$ replaced by $\frac {n+1} {2}$ and $k$
replaced by $\frac {k} {2}+1$, we thus have:
\[
\Fix_{R_l^e}(\T_l(n,k))=\tfrac {k} {2}\cdot\frac {1} {\frac {n+1} {2}-1}
\binom {\frac {n+1} {2}-1}{\frac {k} {2}-1}
\binom {\frac {n+1} {2}-1}{\frac {k} {2}}.
\]
This  is equivalent to
the expression in the second alternative
of~\eqref{eq:3}. This proves \eqref{eq:4} in the present case.

\medskip

Now, if $d:=k/e$ divides $n$, let us
write $f=2n/d$. By Proposition~\ref{prop:RRR}, we need to  count trees of $\T_l(n,k)$ fixed by~$R^f$. By Proposition~\ref{prop:structure}, the map $\Psi_d$ sends them bijectively to trees of $\T_l(\frac f 2)$ having
\begin{itemize}
\item either $\frac kd=e$ leaves and a marked node that is not a leaf,
  \item or $1+\frac kd=1+e$ leaves, one of which being marked and distinct from the root.
\end{itemize}
According to~\eqref{eq:Tl(n,k)}, there are
\[
\left(1+\tfrac f 2-e\right)
\frac {1} {\frac f 2-1}
\binom {\frac f 2-1}{e-2}
\binom {\frac f 2-1}{e-1}
\]
trees of the first type, and 
\[
e\cdot
\frac {1} {\frac f 2-1}
\binom {\frac f 2-1}{e-1}
\binom {\frac f 2-1}{e}
\]
trees of the second type. The sum of these two numbers is equal to the expression in the third alternative 
of~\eqref{eq:3}, and is thus proving~\eqref{eq:4} also in this case.

For all other triples $(n,k,e)$,  no tree of $\T_l(n,k)$ is fixed by~$R_l^e$. This proves \eqref{eq:4} in the last alternative of~\eqref{eq:3}.
\end{proof}

\begin{lemma} \label{lem:2P}
Let $n$ and $k$ be positive integers. Then the expression
\begin{equation} \label{eq:N(n,k)1} 
\frac {1} {[n-1]_q}
\begin{bmatrix} n-1\\k-2\end{bmatrix}_q
\begin{bmatrix} n-1\\k-1\end{bmatrix}_q
\end{equation}
is a polynomial in $q$ with non-negative coefficients.
\end{lemma}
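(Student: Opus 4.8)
The plan is to prove that the expression
\[
\frac {1} {[n-1]_q}
\begin{bmatrix} n-1\\k-2\end{bmatrix}_q
\begin{bmatrix} n-1\\k-1\end{bmatrix}_q
\]
is a polynomial with non-negative coefficients, following the same two-stage strategy used in Lemma~\ref{lem:6P}. The first stage establishes polynomiality (that the expression has no poles) by a cyclotomic-multiplicity computation; the second stage upgrades this to non-negativity of coefficients via a reciprocality and unimodality argument.

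For polynomiality, I would factor each $q$-integer into cyclotomic polynomials using $[m]_q=\prod_{d\ge 2,\ d\mid m}\Phi_d(q)$, exactly as in the proof of Lemma~\ref{lem:6P}. Writing the expression as $\prod_{d\ge 2}\Phi_d(q)^{e_d}$, the exponent is
\[
e_d=-\chi\big(d\mid (n-1)\big)
+\fl{\tfrac{n-1}{d}}+\fl{\tfrac{n-1}{d}}
-\fl{\tfrac{k-2}{d}}-\fl{\tfrac{n-k+1}{d}}
-\fl{\tfrac{k-1}{d}}-\fl{\tfrac{n-k}{d}},
\]
and the goal is to show $e_d\ge 0$ for every $d\ge 2$. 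I would introduce fractional parts $N:=\{(n-1)/d\}$ and $K:=\{(k-1)/d\}$ (shifting the reference point by one compared to Lemma~\ref{lem:6P}, which is natural here since the two upper arguments are $n-1$), rewrite every floor in terms of $\fl{(n-1)/d}$, $\fl{(k-1)/d}$ and the fractional parts, and obtain a reduced formula for $e_d$ depending only on $N$, $K$, $d$ and the divisibility indicator. The same case analysis as before — splitting on $N<K$ versus $N\ge K$, and tracking exactly when the negative floor terms drop to $-1$ — should close out non-negativity, with $d=2$ handled as a separate easy special case. I expect this stage to be routine arithmetic with floors.

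For non-negativity of coefficients, I would mimic the argument imported from \cite[Thm.~2]{AndrCB} and \cite[Prop.~10.1(iii)]{ReSWAA}. The starting point is that the product of reciprocal unimodal polynomials is reciprocal and unimodal (cf.\ \cite[Thm.~3.9]{AndrAF}), so an appropriate $q$-binomial product is reciprocal and unimodal. The key manipulation is to write the target expression as $\frac{1}{1-q^{n-1}}$ times $(1-q)$ times such a product, so that the coefficients up to the midpoint are non-negative by unimodality; combined with polynomiality (already proved) and reciprocality, this forces all coefficients to be non-negative. The one point requiring care is to get the exact $q$-binomial identity
\[
\frac {1} {[n-1]_q}
\begin{bmatrix} n-1\\k-2\end{bmatrix}_q
\begin{bmatrix} n-1\\k-1\end{bmatrix}_q
=\frac{1}{1-q^{n-1}}\,(1-q)
\begin{bmatrix} n-2\\k-2\end{bmatrix}_q
\begin{bmatrix} n-1\\k-1\end{bmatrix}_q,
\]
which peels a factor off the first $q$-binomial via $\begin{bmatrix} n-1\\k-2\end{bmatrix}_q=\frac{[n-1]_q}{[n-k+1]_q}\begin{bmatrix} n-2\\k-2\end{bmatrix}_q$ and is simpler than the analogous step in Lemma~\ref{lem:6P} because there is no troublesome factor of $2$ to $q$-ify here.

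The main obstacle I anticipate is purely bookkeeping in the polynomiality stage: because the two upper $q$-binomial arguments both equal $n-1$ (rather than $n-1$ and $n$ as in Lemma~\ref{lem:6P}), the symmetry between the floor terms is slightly different, and one must choose the reference fractional parts $N$, $K$ so that the reduced expression for $e_d$ falls into exactly the same sign-controlled form as \eqref{eq:e_d3}. Once the floors are correctly normalized, the argument should be a near-verbatim transcription of the earlier lemma, and the non-negativity of coefficients should follow with no new ideas beyond the reciprocal-unimodal machinery already invoked.
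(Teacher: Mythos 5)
Your route is genuinely different from the paper's. The paper disposes of this lemma in one line, by citing the known combinatorial interpretation of the $q$-Narayana number $\frac{1}{[n-1]_q}\qbinom{n-1}{k-2}_q\qbinom{n-1}{k-1}_q$ as a (major-index) generating function for lattice paths, following F\"urlinger--Hofbauer; that citation yields polynomiality and non-negativity simultaneously. Your plan instead transplants the two-stage machinery of Lemma~\ref{lem:6P}. The first stage does close: with $N=\{(n-1)/d\}$ and $K=\{(k-1)/d\}$ the exponent reduces to $e_d=-\chi(N=0)+\chi(K=0)-\fl{N-K+\frac 1d}-\fl{N-K}$, and a short case check (on whether $N$ and $K$ vanish and on the sign of $N-K$) shows $e_d\ge 0$ for all $d\ge 2$. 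So your approach is a workable, self-contained alternative, at the price of being considerably longer than the paper's citation.

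There is, however, a concrete error in your second stage: the displayed identity is false. From the peeling relation you yourself quote, $\qbinom{n-1}{k-2}_q=\frac{[n-1]_q}{[n-k+1]_q}\qbinom{n-2}{k-2}_q$, the correct decomposition is
\[
\frac{1}{[n-1]_q}\qbinom{n-1}{k-2}_q\qbinom{n-1}{k-1}_q
=\frac{1}{1-q^{\,n-k+1}}\,(1-q)\,\qbinom{n-2}{k-2}_q\qbinom{n-1}{k-1}_q,
\]
with denominator $1-q^{n-k+1}$, not $1-q^{n-1}$; as written, your two sides differ by the factor $[n-1]_q/[n-k+1]_q$. The corrected identity still supports the argument, since $\frac{1}{1-q^{n-k+1}}$ is a power series with non-negative coefficients throughout the only non-trivial range $2\le k\le n$ (outside it the expression vanishes). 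But you then also need to mind the degrees: the product $\qbinom{n-2}{k-2}_q\qbinom{n-1}{k-1}_q$ is reciprocal and unimodal of degree $(2k-3)(n-k)$, whereas the target polynomial is reciprocal of degree $(2k-4)(n-k)$, which is in general strictly smaller. So you must invoke the ``degree at most'' variant of the reciprocal-polynomial argument, as in the proof of Lemma~\ref{lem:3P}, rather than the equal-degree version used in Lemma~\ref{lem:6P}. With those two repairs the proof goes through.
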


\begin{proof}
This follows immediately from the fact that the
expression~\eqref{eq:N(n,k)1} has --- up to a power of~$q$ --- a
combinatorial interpretation as generating function for certain
lattice paths; see~\cite[Eq.~(4.1)]{FuHoAA} 
or \cite[Eq.~(5.8) with $t=1$, $n$ replaced by $n-1$,
  and $k$ replaced by $k-1$]{KratAF}.
\end{proof}

\begin{lemma} \label{lem:2}
Let $n\ge 1$ and $k\ge 2$.
Let $\om$ be a primitive $k$-th root of unity, and let $e$ be a
non-negative integer $<k$ being equal to~$0$ or dividing~$k$.
In the latter case, write $d:=k/e$, so that $d \ge 2$.
Then we have
\begin{multline} \label{eq:3} 
\frac {1} {[n-1]_q}
\begin{bmatrix} n-1\\k-2\end{bmatrix}_q
\begin{bmatrix} n-1\\k-1\end{bmatrix}_q
\Bigg\vert_{q=\om^e}\\
=
\begin{cases}
  \frac {1} {n-1}\binom {n-1}{k-2}\binom {n-1}{k-1},&\text{if }e=0,
  \\[.5em]
\binom{ \frac{n+1} 2 -2}{ \frac k 2 -1} \binom{ \frac{n+1} 2 -1}{ \frac k 2-1},
&\text{if
  $d=2$ with $k$ and $n+1$ even},
\\[.5em]
{\binom {\frac {n}{ d }-1}{e-1}}^2,
&\text{if }  d
\mid n,\\
0,&\text{otherwise.}
\end{cases}
\end{multline}
\end{lemma}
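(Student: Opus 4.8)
The plan is to follow the same three-stage pattern as the proofs of Lemmas~\ref{lem:1} and~\ref{lem:8}: dispose of $e=0$ by specializing at $q=1$, where the expression reduces to the ordinary product of binomials equal to $|\T_l(n,k)|$ (the first case of~\eqref{eq:3}); and then, for a positive divisor $e$ of $k$, set $\rho:=\om^e$ and note that $\rho$ is a primitive $d$-th root of unity with $d=k/e\ge 2$. Since $e$ divides $k$, I would record once and for all that $d\mid k$, a fact used throughout.

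First I would determine the \emph{support}, i.e.\ for which $d$ the specialization is non-zero; this settles the fourth (vanishing) line of~\eqref{eq:3} and isolates the second and third. Since polynomiality is already granted by Lemma~\ref{lem:2P}, I only need the zero locus of the multiplicity $\nu_d$ of $\Phi_d$ in the expression. Writing the expression over $q$-factorials and counting, for each $q$-integer, how many of its factors are divisible by $d$ (using that $\rho$ is a simple root of $[\alpha]_q$ exactly when $d\mid\alpha$, cf.~\eqref{qint0}), one gets
\[
\nu_d=2\fl{\tfrac{n-1}{d}}-\chi(d\mid n-1)-\fl{\tfrac{k-2}{d}}-\fl{\tfrac{k-1}{d}}-\fl{\tfrac{n-k+1}{d}}-\fl{\tfrac{n-k}{d}}.
\]
Using $d\mid k$ to evaluate the floors, I would check that $\nu_d=0$ exactly when $d=2$ (where $k$ is automatically even) or when $d\mid n$, and that $\nu_d\ge 1$ in the remaining case $d>2,\ d\nmid n$. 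This last inequality is the only place needing a short case distinction on the residue $n\bmod d$ (the sub-cases $n\equiv 1$, $2\le n\bmod d\le d-2$, and $n\equiv d-1$), and I expect this bookkeeping to be the main obstacle, precisely because the lone denominator factor $[n-1]_q$ — which has no counterpart in a clean product of two $q$-binomials — contributes the awkward term $-\chi(d\mid n-1)$ and must be tracked separately. When $d=2$ one lands in the second case of~\eqref{eq:3} if $n$ is odd and in the third (with $d=2\mid n$) if $n$ is even, matching the stated dichotomy.

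For the evaluation in the surviving cases I would use~\eqref{eq:SM} exactly as in Lemma~\ref{lem:1}, grouping the $q$-integers into ratios whose numerator and denominator differ by a multiple of $d$, in the spirit of the grouped products~\eqref{grouped-2} and~\eqref{grouped-d}. The one new feature is again the isolated factor $1/[n-1]_q$, which I would absorb into a $q$-binomial so that the remaining lone $q$-integer is a \emph{unit} at $\rho$. In the third case ($d\mid n$, hence $d\mid n-k$) the convenient reformulation, obtained from elementary $q$-factorial identities, is
\[
\frac{1}{[n-1]_q}\begin{bmatrix} n-1\\k-2\end{bmatrix}_q\begin{bmatrix} n-1\\k-1\end{bmatrix}_q
=\frac{1}{[n-k+1]_q}\begin{bmatrix} n-1\\k-1\end{bmatrix}_q\begin{bmatrix} n-2\\k-2\end{bmatrix}_q,
\]
where $[n-k+1]_q=1$ at $\rho$; writing each $q$-binomial as $\prod_j[n-k+j]_q/[j]_q$ and applying~\eqref{eq:SM} (only the factors with $d\mid j$ survive) collapses each to $\binom{n/d-1}{e-1}$, giving the claimed $\binom{n/d-1}{e-1}^2$. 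In the second case ($d=2$, $\rho=-1$, $n$ odd, $k$ even) the analogous regrouping — now with the lone factor $[k-1]_q$ equal to $1$ at $-1$ — together with~\eqref{eq:SM} produces $\binom{(n-1)/2}{k/2-1}\binom{(n-3)/2}{k/2-1}$, which is exactly the second line of~\eqref{eq:3}.

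Finally, I would emphasize that this lemma is purely the evaluation step: non-negativity and polynomiality come from Lemma~\ref{lem:2P}, and the combinatorial fixed-point counts of Theorem~\ref{thm:ext} are matched \emph{against} these evaluations rather than used to derive them, so there is no circularity.
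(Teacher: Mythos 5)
Your proposal is correct and follows essentially the same route as the paper: compute the multiplicity of $\Phi_d$ in the expression via floor functions (your $2\fl{\frac{n-1}{d}}-\chi(d\mid n-1)$ equals the paper's $\fl{\frac{n-2}{d}}+\fl{\frac{n-1}{d}}$), use $d\mid k$ to locate the zero set by a short case analysis on $n\bmod d$, and then evaluate the surviving cases with~\eqref{eq:SM} by pairing $q$-integers congruent modulo~$d$. The only difference is that you carry out explicitly the final evaluation step (including the convenient rewritings that isolate a unit factor $[n-k+1]_q$ or $[k-1]_q$), which the paper dismisses as "an easy task"; your computations there check out.
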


\begin{proof}
The case $e=0$ being obvious, we assume that
$e$ is a positive divisor of~$k$ with $e<k$.
Then $\om^e$ is a primitive $d$-th root of unity. By~\eqref{qint0}, its multiplicity, as a root of the conjectured cyclic sieving polynomial, is
\[
  f_d= \fl{ \frac{n-2}d} + \fl{\frac{n-1}d} - \fl{\frac{k-2}d} -\fl{\frac {k-1}d} - \fl{\frac {n-k+1} d} - \fl{\frac{n-k}d}.
\]
We introduce
again the fractional parts  $N= \{(n-2)/d\}$ and $K=\{(k-2)/d\}$. Since by assumption $d \mid k$, we have $K=1-\frac 2d$. Then
the above equation for~$f_d$
can be rewritten as
\begin{align*}
    f_d&= \fl{N+ \frac 1 d}
  - \fl{K+\frac 1 d}
  - \fl{N-K+\frac 1 d} - \fl{N-K}
  \\
  &= \fl{N+ \frac 1 d} -0 - \fl{N - \left( 1- \frac 3 d\right)} - \fl{N- \left( 1- \frac 2 d\right)}.
\end{align*}
The left-hand side of~\eqref{eq:3} will be non-zero if and only if $f_d=0$.
If $N<K$ (which is only possible for $d>2$), then  $f_d$ is $1$ or $2$. If  $N=1-\frac 2d$, then $f_d=0$. By definition of~$N$,  the condition $N=1-\frac 2d$ means that  $d \mid n$, so that we are in the third case of~\eqref{eq:3}. Finally, if $N=1-\frac 1d$ (equivalently,
$d\mid (n-1)$),
we find that $f_d=0$ if and only if $d=2$. This corresponds to the second case of~\eqref{eq:3}. We have now isolated all values of~$d$ for which the left-hand side of~\eqref{eq:3} is non-zero. This already proves the fourth case of~\eqref{eq:3}.

It is then  an easy task to evaluate the left-hand side of~\eqref{eq:3} in the second and third case, using the simple fact~\eqref{eq:SM}. The key idea, again, is to form ratios of terms of the form $1-q^a$ where the exponents of~$q$ differ by a multiple of~$d$, as in~\eqref{grouped-2} and~\eqref{grouped-d}.
\end{proof}

\section{Internal tree rotation}
\label{sec:internal}

In this section, we consider the set $\T_i(n,k)$
of all rooted plane trees with $n$~edges
and~$k$~leaves, where the root is not a leaf.
The cardinality of $\T_i(n,k)$ is given in~\eqref{eq:Ti(n,k)}. The internal rotation~$R_i$ defined in Section~\ref{sec:rotations} acts on these trees by moving the root to the next corner not incident to a leaf found in counterclockwise order around the tree. 

\begin{theorem} \label{thm:int}
Let $n$ and $k$ be positive integers. The cyclic group of order~$2n-k$ acts on $\T_i(n,k)$ by the internal rotation~$R_i$, and the triple
\[
  \left(\T_i(n,k),\langle R_i\rangle,\frac {[2n-k]_q}
{[n]_q\,[n-1]_q}
\begin{bmatrix} n-1\\k-2\end{bmatrix}_q
\begin{bmatrix} n\\k\end{bmatrix}_q\right)
\]
exhibits the cyclic sieving phenomenon.
\end{theorem}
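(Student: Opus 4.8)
The plan is to follow the exact same three-part scheme established in the proofs of Theorems~\ref{thm:ord} and~\ref{thm:ord_leaves}: first reduce to counting fixed points via Lemma~\ref{lem:divisors}, then delegate polynomiality/non-negativity and root-of-unity evaluation to two companion lemmas (to be proved separately), leaving only the fixed-point enumeration in the body of the theorem. Since the cyclic group here has order $2n-k$, I would let $\om$ be a primitive $(2n-k)$-th root of unity and reduce, by Lemma~\ref{lem:divisors}, to proving $\Fix_{R_i^e}(\T_i(n,k))$ equals the claimed polynomial evaluated at $q=\om^e$ for $e=0$ and for $e$ a divisor of $2n-k$. The case $e=0$ is immediate from~\eqref{eq:Ti(n,k)}, matching the value of the polynomial at $q=1$.

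For $0<e<2n-k$ dividing $2n-k$, I would invoke the second statement of Proposition~\ref{prop:RRR}, which translates ``fixed by $R_i^e$'' into ``fixed by $R^f$'' with $f=2n/d$ and $d=(2n-k)/e$, and which restricts the admissible $e$ to two families: $e=(2n-k)/2$ with $k$ and $n+1$ even (so $f=n$, $n$ odd), and those with $d\mid n$ (so $f$ even). In the first family I apply the transformation $\Phi$ from Proposition~\ref{prop:structure} to biject fixed trees with edge-centered trees, reconstructing via a marked leaf in $\T(\frac{n+1}2,\cdot)$; here the root sits at a non-leaf, so I must count trees in the smaller class with a marked non-leaf corner, adjusting leaf counts accordingly (the root contributes $1+\frac k2$ leaves upstairs). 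In the second family I apply $\Psi_d$, splitting into the two cases already seen in Theorem~\ref{thm:ord_leaves}: either the center has degree a multiple of $d$ strictly greater than $d$ (marked node is a genuine interior node) or degree exactly $d$. In each branch I count the relevant trees of $\T(\frac e2)$ using~\eqref{eq:Tl(n,k)}, \eqref{eq:Ti(n,k)}, and~\eqref{eq:T(n,k)}, weighting by the number of eligible marked corners, and sum.

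The companion evaluation lemma (the analogue of Lemmas~\ref{lem:1} and~\ref{lem:8}) would compute the specialization of the $q$-expression $\frac{[2n-k]_q}{[n]_q[n-1]_q}\qbinom{n-1}{k-2}_q\qbinom{n}{k}_q$ at a primitive $d$-th root of unity by the cyclotomic bookkeeping of~\eqref{qint0}--\eqref{eq:SM}: determine the exponent $e_d$ of $\Phi_d$ via floor-function counting, show it is zero precisely in the two admissible families, and then form ratios of $q$-integers differing by multiples of $d$ (grouping as in~\eqref{grouped-2} and~\eqref{grouped-d}) to read off the numerical value. The polynomiality/non-negativity lemma is the part I expect to be the main obstacle, because the factor $[2n-k]_q$ in the numerator is the internal-rotation analogue of the awkward ``$1+q^n$'' in Theorem~\ref{thm:ord_leaves}; establishing that $e_d\ge0$ for all $d$ requires a careful case analysis of the floor terms, and proving non-negativity of coefficients will likely need the reciprocal-and-unimodal argument (via \cite{AndrCB,ReSWAA,AndrAF}) adapted to show that the full expression is a polynomial multiple of a manifestly unimodal $q$-binomial product. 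Verifying that the arithmetic of the two fixed-point counts matches these specialized values in every admissible case is routine but delicate, and I would present it in full only for the first few cases, abbreviating the later ones as the paper's stated convention allows.
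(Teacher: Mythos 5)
Your overall architecture is exactly the paper's: reduce to $e=0$ and divisors of $2n-k$ via Lemma~\ref{lem:divisors}, translate $R_i^e$-invariance into $R^f$-invariance through Proposition~\ref{prop:RRR}, apply $\Phi$ (when $f=n$, $n$ odd) or $\Psi_d$ (when $f$ is even) from Proposition~\ref{prop:structure}, and outsource polynomiality (via the decomposition into a Narayana-type term plus a reciprocal-and-unimodal term, as in Lemma~\ref{lem:3P}) and the root-of-unity evaluations (Lemma~\ref{lem:3}) to companion lemmas. Two points need correcting, one of which would make your enumeration fail as written. First, in the $\Phi$ branch the image of a fixed tree is a tree of $\T_i(\frac{n+1}{2},1+\frac{k}{2})$ with a marked \emph{leaf} (the new leaf created at the cut point of the central edge), not a tree ``with a marked non-leaf corner''; the root is at a non-leaf, but the marked object is a leaf, and the count is a single application of~\eqref{eq:Ti(n,k)} with no subtraction needed (unlike in Theorem~\ref{thm:ord_leaves}).

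Second, and more seriously: in the $\Psi_d$ branch you announce ``the two cases already seen in Theorem~\ref{thm:ord_leaves}'', but for internal-rooted trees this split is into \emph{three} cases, and the two-case framing undercounts. When the center $c$ has degree exactly $d$, the node $c$ becomes a leaf of the quotient tree $\tau'$ (it has degree $1$ there), yet $c$ was a non-leaf of degree $d\ge 2$ in $\tau$ and hence is an admissible root for a tree of $\T_i(n,k)$. So the quotient trees with $1+k/d$ leaves split further according to whether the root of $\tau'$ is the marked leaf $c$ itself (counted by the leaf-rooted formula~\eqref{eq:Tl(n,k)}) or a genuine non-leaf of $\tau'$ (counted by~\eqref{eq:Ti(n,k)}); the third case, center of degree $>d$, is counted by~\eqref{eq:Ti(n,k)} with a marked non-leaf. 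You do list~\eqref{eq:Tl(n,k)} among the formulas you intend to use, which suggests you sense the issue, but the explicit two-case plan would omit the trees whose root node is the center, and the resulting sum would not match the evaluation in Lemma~\ref{lem:3}. With that three-way split restored, your proof coincides with the paper's.
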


\begin{example}
The set $\T_i(3,3)$ is a singleton, while  $\T_i(3,2)$ has two elements that~$R_i$ exchanges (see Figure~\ref{fig:example}). The above theorem gives the polynomials $1$ and $1+q^2$, respectively, and one checks that they indeed satisfy the cyclic sieving phenomenon.
  \end{example}

\begin{proof}[Proof of Theorem \ref{thm:int}]
Polynomiality and non-negativity of coefficients of the cyclic
sieving polynomial are proved in Lemma~\ref{lem:3P} below.

Let $\om$ be a primitive $(2n-k)$-th root of unity.
We have to prove 
\begin{equation} \label{eq:6}
\Fix_{R_i^e}(\T_i(n,k))=
\frac {[2n-k]_q} {[n]_q\,[n-1]_q}
\begin{bmatrix} n-1\\k-2\end{bmatrix}_q
\begin{bmatrix} n\\k\end{bmatrix}_q
\Bigg\vert_{q=\om^ e}
\end{equation}
for all integers $e$.
It suffices  to prove this for values of $e\in \llbracket 0, 2n-1-k\rrbracket$ equal to $0$ or dividing $2n-k$. Lemma~\ref{lem:3} gives the corresponding values of the right-hand side of~\eqref{eq:6}.

\medskip
We begin with $e=0$. Clearly, all trees in $\T_i(n,k)$ are invariant under
$R_i^0=\text{id}$. By~\eqref{eq:Ti(n,k)} we know the cardinality
of~$\T_i(n,k)$, which coincides with the value at $q=1$ of the right-hand side of~\eqref{eq:6}.

We now assume $0<e<2n-k$, with $e$ a divisor of $2n-k$. We rely in what follows on Proposition~\ref{prop:RRR}, which relates the rotations~$R_i$ and~$R$, and on Proposition~\ref{prop:structure}, which characterises trees fixed by a given power of~$R$. Recall in particular that the only possible values of~$e$ are $e=n-\frac k2$, with $n+1$ and $k$ even (corresponding to $f=n$ in the notation of Proposition~\ref{prop:RRR}) and those
for which
$d:=(2n-k)/e$ divides~$n$ (corresponding to even values of~$f$). Note that, in the latter case, $d$ divides~$k$.

If $e=n-\frac k2$, with $k$ and $n+1$ even, Proposition~\ref{prop:RRR} tells us that we have to count trees of $\T_i(n,k)$ fixed by~$R^{n}$. By Proposition~\ref{prop:structure}, the map $\Phi$ sends them bijectively to trees of $\T_i(\frac {n+1} 2, 1+\frac k 2)$ having a marked leaf. 
 By~\eqref{eq:Ti(n,k)}, the number of such trees is
  \[
    \left( \tfrac k 2 +1\right) \cdot \frac{2\,\frac {n+1} 2-(\frac k 2+1)}{(\frac {n+1} 2)(\frac {n+1} 2-1)}
    \binom{\frac {n+1} 2-1}{\frac k 2-1}\binom{\frac {n+1} 2}{\frac k 2+1}.
    \]
 This coincides with
 the expression in the second alternative
 of~\eqref{eq:5}. This proves \eqref{eq:6} in the present case.

Now, if $d:=(2n-k)/e$ divides~$n$, let us
write $f:=2n/d$. By Proposition~\ref{prop:RRR}, we need to count trees of $\T_i(n,k)$ fixed by~$R^f$. By Proposition~\ref{prop:structure}, the map $\Psi_d$ sends them bijectively to trees $\tau'$ with $f/2$ edges and $\frac kd+ \vareps$ leaves, where
\begin{itemize}
\item either $\vareps=1$, and $\tau'$ is rooted at a (marked) leaf,
\item or $\vareps=1$, one leaf is marked, and $\tau'$  is  rooted at a non-leaf,
  \item or $\vareps=0$, one non-leaf is marked, and $\tau'$ is rooted at a non-leaf.
  \end{itemize}
  The first two cases occur for trees of $\T_i(n,k)$ (fixed by~$R^f$) in which the central vertex~$c$ has degree~$d$ exactly. In the first (respectively second) case, the root node is $c$ itself (respectively is different from $c$).
  
  By~\eqref{eq:Tl(n,k)}, the number of trees of the first type is
  \[
  \frac 1 {\frac f 2-1}\binom{\frac f 2-1}{\frac k d -1} \binom{\frac f 2-1}{\frac k d }.
  \]
  By~\eqref{eq:Ti(n,k)}, the number of trees of the second type is
  \[
    \left(\tfrac k d  +1\right) \cdot \frac{f-\frac k d -1}{\frac f 2 (\frac f 2-1)} \binom{\frac f 2 -1}{\frac k d -1} \binom{\frac f 2 }{\frac k d +1},
  \]
  while the number of trees of the third type is
  \[
    \left( \tfrac f 2+1-\tfrac k d \right) \cdot \frac{f-\frac k d }{\frac f 2 (\frac f 2-1)}\binom{\frac f 2 -1}{\frac k d -2}\binom{\frac f 2 }{\frac k d }.
  \]
The sum of these three numbers is equal to
the expression in the third alternative 
of~\eqref{eq:5} and hence proves \eqref{eq:6} also in this case.

\medskip
For all other triples $(n,k,e)$, no tree of $\T_i(n,k)$ is fixed by~$R_i^e$. This proves~\eqref{eq:6} in the last alternative of~\eqref{eq:5}.
\end{proof}

\begin{lemma} \label{lem:3P}
Let $n\ge 1$ and $k\ge 2$.
 Then the expression
\begin{equation} \label{eq:N(n,k)2} 
\frac {[2n-k]_q} {[n]_q\,[n-1]_q}
\begin{bmatrix} n-1\\k-2\end{bmatrix}_q
\begin{bmatrix} n\\k\end{bmatrix}_q
\end{equation}
is a polynomial in $q$ with non-negative coefficients.
\end{lemma}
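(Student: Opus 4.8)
The plan is to follow the three‑step template of Lemma~\ref{lem:6P}: prove (a)~polynomiality, (b)~reciprocity, and (c)~non‑negativity of the coefficients of~\eqref{eq:N(n,k)2}. Write $P_i$, $P_l$, $P$ for the three cyclic sieving polynomials~\eqref{eq:N(n,k)2}, \eqref{eq:N(n,k)1}, \eqref{eq:N(n,k)3}. The starting point are two elementary identities. From $\begin{bmatrix} n\\k\end{bmatrix}_q=\frac{[n]_q}{[k]_q}\begin{bmatrix} n-1\\k-1\end{bmatrix}_q$ one gets the compact forms $P_i=\frac{[2n-k]_q}{[k]_q}\,P_l$ and $P=\frac{[2n]_q}{[k]_q}\,P_l$, and combining these with $[2n]_q-[2n-k]_q=q^{2n-k}[k]_q$ yields the key relation
\[
P_i=P-q^{2n-k}P_l .
\]

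First I would dispose of polynomiality and reciprocity. Since $P$ and $P_l$ are polynomials by Lemmas~\ref{lem:6P} and~\ref{lem:2P}, the relation $P_i=P-q^{2n-k}P_l$ shows immediately that $P_i$ is a polynomial. (In the cyclotomic bookkeeping of Lemma~\ref{lem:6P} this reads as follows: passing from~\eqref{eq:N(n,k)3} to~\eqref{eq:N(n,k)2} only replaces $[2n]_q$ by $[2n-k]_q$, so the multiplicity of a primitive $d$‑th root of unity in $P_i$ is $e_d-\chi(d\mid 2n)+\chi(d\mid 2n-k)$, with $e_d$ as in~\eqref{eq:e_d}; if $e_d=0$ then $d\mid k$ by the analysis in the proof of Lemma~\ref{lem:8}, whence $\chi(d\mid 2n)=\chi(d\mid 2n-k)$ and the multiplicity stays $0$, while if $e_d\ge1$ it stays $\ge0$.) Reciprocity is immediate: \eqref{eq:N(n,k)2} is a quotient of reciprocal factors which is a polynomial, hence reciprocal, of degree $2(n-k)(k-1)$.

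The crux is non‑negativity. I would aim to reproduce the mechanism of Lemma~\ref{lem:6P}: write $P_i=F(q)\,(1-q)\,U(q)$ with $U$ a product of two Gaussian binomials (hence reciprocal and unimodal) and $F$ a power series, observe that the coefficient of $q^i$ in such a product is non‑negative for every $i\le\min(d_1,d_2)$ (where $d_1$, $d_2$ are the last indices up to which $F$, resp.\ $(1-q)U$, have non‑negative coefficients), and then let reciprocity upgrade non‑negativity on the first half to all coefficients. Carrying out the same cancellations as in Lemma~\ref{lem:6P} gives
\[
P_i=\frac{1-q^{2n-k}}{(1-q^{n})(1-q^{n+1})}\,(1-q)\begin{bmatrix} n-2\\k-2\end{bmatrix}_q\begin{bmatrix} n+1\\k\end{bmatrix}_q .
\]

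This is exactly where the difficulty lies, and why $P_i$ is harder than $P$. The prefactor $F(q)=\frac{1-q^{2n-k}}{(1-q^{n})(1-q^{n+1})}$ is \emph{not} a non‑negative power series (in general its coefficient of $q^{2n-k}$ is negative), unlike the manifestly non‑negative $\frac{1+q^{n}}{1-q^{n+1}}$ of Lemma~\ref{lem:6P}. Hence $d_1=2n-k-1$, which for $k$ small relative to $n$ still passes the centre $(n-k)(k-1)$ of $P_i$, but for $k$ comparable to $n$ falls short of it, so the naive estimate no longer reaches the middle. I expect this step to require a genuinely new idea, and I would pursue two routes. The first, in the spirit of Lemma~\ref{lem:2P}, is to find a statistic on a suitable family of lattice paths (or on $\T_i(n,k)$ directly) whose generating function is~\eqref{eq:N(n,k)2}, making non‑negativity manifest. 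The second is to isolate a positive part via the Euclidean‑type splitting
\[
P_i=\big(1+q^{k}+\cdots+q^{(a-1)k}\big)\,P_l+q^{ak}\,\frac{[b]_q}{[k]_q}\,P_l,\qquad a=\lfloor(2n-k)/k\rfloor,\ \ b=(2n-k)\bmod k,
\]
whose first summand is a product of polynomials with non‑negative coefficients, and then to control the smaller remainder $\frac{[b]_q}{[k]_q}P_l$ (with $0\le b<k$) separately. Either way, the non‑negativity of the coefficients — not polynomiality — is the real content of the lemma.
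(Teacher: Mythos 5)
Your identity $P_i=P-q^{2n-k}P_l$ is correct and gives a clean proof of polynomiality (and your degree/reciprocity computation is also right), but the proposal does not prove the statement: non-negativity of the coefficients — which you yourself rightly call the real content of the lemma — is left as an open step with two unexecuted "routes". A difference of two non-negative polynomials carries no positivity information, and you correctly diagnose that the prefactor $\frac{1-q^{2n-k}}{(1-q^n)(1-q^{n+1})}$ is not a non-negative series, so the direct transplant of the Lemma~\ref{lem:6P} mechanism fails. That is a genuine gap, not a routine verification.

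The missing idea is a variant of your second route, but stopped after a \emph{single} step of the division rather than carried to a full Euclidean reduction. Writing $[2n-k]_q=[k]_q+q^k[2n-2k]_q$ and using $\frac{[k]_q}{[n]_q}\qbinom{n}{k}_q=\qbinom{n-1}{k-1}_q$ and $\frac{[n-k]_q}{[k]_q}\qbinom{n-1}{k-1}_q=\qbinom{n-1}{k}_q$ together with $[2n-2k]_q=(1+q^{n-k})[n-k]_q$, the paper obtains
\[
P_i=\frac {1} {[n-1]_q}\qbinom{n-1}{k-2}_q\qbinom{n-1}{k-1}_q
+q^k\,\frac {1+q^{n-k}} {[n-1]_q}\qbinom{n-1}{k-2}_q\qbinom{n-1}{k}_q,
\]
a \emph{sum} of two pieces: the first is $P_l$, non-negative by Lemma~\ref{lem:2P}, and the second has exactly the shape of Lemma~\ref{lem:6P} (with $1+q^{n-k}$ in place of $1+q^{n}$), so it succumbs to the same cyclotomic-exponent analysis for polynomiality and the same reciprocal-plus-unimodal argument for non-negativity. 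Your version reduces the remainder all the way modulo $[k]_q$, which leaves the term $q^{ak}\frac{[b]_q}{[k]_q}P_l$ with $b<k$; that remainder is not even obviously a polynomial, and there is no mechanism in sight to control it. Keeping the remainder as $q^k[2n-2k]_q$ is what makes it tractable, because $[2n-2k]_q/[n-k]_q=1+q^{n-k}$ restores the $\frac{1+q^{m}}{[\,\cdot\,]_q}$ shape that the unimodality argument can handle.
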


\begin{proof}
We decompose the polynomial in~\eqref{eq:N(n,k)2} as
\begin{equation} \label{eq:decomp1} 
\frac {1} {[n-1]_q}
\begin{bmatrix} n-1\\k-2\end{bmatrix}_q
\begin{bmatrix} n-1\\k-1\end{bmatrix}_q
+q^k
\frac {1+q^{n-k}} {[n-1]_q}
\begin{bmatrix} n-1\\k-2\end{bmatrix}_q
\begin{bmatrix} n-1\\k\end{bmatrix}_q.
\end{equation}
We know from Lemma~\ref{lem:2P} that the first term in this
expression is a polynomial in~$q$ with non-negative coefficients.

To see that the second term is a polynomial,
we proceed as in the proof of Lemma~\ref{lem:6P}. Denoting again by  $\Phi_d(q)$ the $d$-th cyclotomic polynomial, we write
\begin{equation} \label{eq:1+q^{n-1}} 
\frac {1+q^{n-k}} {[n-1]_q}
\begin{bmatrix} n-1\\k-2\end{bmatrix}_q
\begin{bmatrix} n-1\\k\end{bmatrix}_q
=\frac {[2n-2k]_q} {[n-k]_q\,[n-1]_q}
\begin{bmatrix} n-1\\k-2\end{bmatrix}_q
\begin{bmatrix} n-1\\k\end{bmatrix}_q
=\prod _{d\ge2} \Phi_d(q)^{g_d}, 
\end{equation}
with
\begin{multline} \label{eq:g_d} 
g_d=\chi\big(d\mid (2n-2k)\big)-\chi\big(d\mid (n-k)\big)
+\fl{\frac {n-2} {d}}+\fl{\frac {n-1} {d}}\\
-\fl{\frac {k-2} {d}}-\fl{\frac {k} {d}}
-\fl{\frac {n-k+1} {d}}-\fl{\frac {n-k-1} {d}}.
\end{multline}

From now on, let $d\ge2$.
Abusing notation,
here we introduce the fractional parts  $N=\{(n-1)/d\}$
and $K=\{(k-2)/d\}$.
Then we may rewrite Equation~\eqref{eq:g_d} as
\begin{align}
\notag
  g_d
&=\chi\big(d\mid (2n-2k)\big)-\chi\big(d\mid (n-k)\big)
+\fl{N-\frac {1} {d}}\\
&\kern3cm
-\fl{K+\frac {2} {d}}
-\fl{N-K}-\fl{N-K-\frac {2} {d}}.
\label{eq:g_d2}
\end{align}

For the case where $d=2$, this reduces to
\begin{align*}
\notag
g_2&=1-\chi\big(2\mid (n-k)\big)
+\fl{N-\frac {1} {2}}-\fl{K+1}
-\fl{N-K}-\fl{N-K-1}\\
&=1-\chi\big(2\mid (n-k)\big)
+\fl{N-\frac {1} {2}}
-2\fl{N-K}.
\end{align*}
A simple analysis of the four possible values of $(N,K)$ shows that the exponent~$g_2$ is always non-negative.

Now let $d>2$. By~\eqref{eq:g_d2}, the exponent $g_d$ may be bounded
from below as follows: 
\begin{equation} \label{eq:g_d3} 
g_d\ge\fl{N-\frac {1} {d}}
-\fl{K+\frac {2} {d}}
-\fl{N-K}-\fl{N-K-\frac {2} {d}}.
\end{equation}
Here, the only terms that can be negative are
$\fl{N-\frac {1} {d}}$ and
$-\fl{K+\frac {2} {d}}$, and if they are then they are equal to~$-1$.
If $N<K$, then the last two terms on the right-hand side
together
produce at least~$+2$, 
which is sufficient to guarantee non-negativity of the right-hand side
of~\eqref{eq:g_d3}, and consequently of~$g_d$.
We may therefore assume from now on that $N\ge K$. 

We have $\fl{N-\frac {1} {d}}=-1$ if and only if $N=0$.
Since we have assumed $N\ge K$,
this implies $K=0$, and therefore the
inequality~\eqref{eq:g_d3} simplifies to
$g_d\ge-1-0-0-(-1)=0$, which indeed implies non-negativity of~$g_d$.

On the other hand, we have $-\fl{K+\frac {2} {d}}=-1$ if and only if
$K=1-\frac {2} {d}$ or $K=1-\frac {1} {d}$. However, by our assumption
$N\ge K$, this forces $-\fl{N-K-\frac {2} {d}}$ to equal~$+1$.
Consequently, the
inequality~\eqref{eq:g_d3} simplifies to
$g_d\ge0-1-0-(-1)=0$, again implying non-negativity of~$g_d$.

This proves the polynomiality part of the lemma. 

\medskip
We will now show that the expression in \eqref{eq:1+q^{n-1}} is in fact a
polynomial with non-negative coefficients. 
In order to do this,
we proceed as in the proof of Lemma~\ref{lem:6P}.
First, as an easy corollary of the earlier used result that
$q$-binomial coefficients are reciprocal
and unimodal polynomials, and the fact
that the product of reciprocal and
unimodal polynomials is also reciprocal and unimodal, one obtains
that the product
\[
\begin{bmatrix} n-1\\k-2\end{bmatrix}_q
\begin{bmatrix} n-1\\k\end{bmatrix}_q
\]
is a reciprocal and unimodal polynomial in~$q$. It is of degree~$B$,
say, where
$B =(k-2)(n-k+1)+k(n-k-1)$ (the explicit value of~$B$
being irrelevant).
Hence, the coefficient of~$q^i$ in
\[
(1-q)\begin{bmatrix} n-1\\k-2\end{bmatrix}_q
\begin{bmatrix} n-1\\k\end{bmatrix}_q
\]
is non-negative for $0\le i\le\cl{B/2}$.
Therefore the same must be true for the series
\[
\frac {1+q^{n-k}} {1-q^{n-1}}\times(1-q)\begin{bmatrix}
  n-1\\k-2\end{bmatrix}_q 
\begin{bmatrix} n-1\\k\end{bmatrix}_q
=
\frac {1+q^{n-k}} {[n-1]_q}\begin{bmatrix} n-1\\k-2\end{bmatrix}_q
\begin{bmatrix} n-1\\k\end{bmatrix}_q,
\]
However, we know from above that the last expression is a polynomial.
Since it is reciprocal
and has degree at most~$B$ as soon as $k\ge 2$, we infer that also the
other coefficients of the polynomial must be non-negative.

\medskip
This completes the proof of the lemma.
\end{proof}

\begin{lemma} \label{lem:3}
Let $\om$ be a primitive $(2n-k)$-th root of unity, and let $e$ be a
non-negative integer $<2n-k$ being equal to~$0$ or dividing~$2n-k$.
In the latter case, write $d:=(2n-k)/e$, so that $d\ge 2$.
Then we have
\begin{multline} \label{eq:5} 
\frac {[2n-k]_q} {[n]_q\,[n-1]_q}
\begin{bmatrix} n-1\\k-2\end{bmatrix}_q
\begin{bmatrix} n\\k\end{bmatrix}_q
\Bigg\vert_{q=\om^e}\\
=
\begin{cases}
  \frac {2n-k} {n(n-1)}\binom {n-1}{k-2}\binom {n}{k},&\text{if }e=0,
  \\[.5em]
  \frac {2n-k} {n-1}
  \left(\begin{smallmatrix} \frac {n-1} 2  \\ \frac k 2 -1 \end{smallmatrix}\right)
    \left(\begin{smallmatrix} \frac {n-1} 2  \\ \frac k 2  \end{smallmatrix}\right),
&\text{if $d=2$ with $n+1$ and $k$  even,}\\[.5em]
\frac {2n-k} {n}
\left(\begin{smallmatrix}  {\frac {n} {d}-1}\\
    {\frac {k} {d}-1}\end{smallmatrix}\right)
\left(\begin{smallmatrix}  {\frac {n} {d}}\\
    {\frac {k} {d}}\end{smallmatrix}\right),
&\text{if } d
\mid n,\\
0,&\text{otherwise.}
\end{cases}
\end{multline}
\end{lemma}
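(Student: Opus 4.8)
**

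The plan is to evaluate the $q$-expression in Lemma~\ref{lem:3} at a primitive $(2n-k)$-th root of unity $\om$, specialized at $q=\om^e$, following exactly the two-stage strategy used in Lemmas~\ref{lem:1}, \ref{lem:8}, and~\ref{lem:2}: first determine the values of $d=(2n-k)/e$ for which the specialization is nonzero (settling the fourth case), and then compute the nonzero values explicitly by forming ratios of $q$-integers whose arguments differ by a multiple of~$d$.

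Let me think about the structure of the factor $[2n-k]_q$ in the numerator. The cyclic group has order $2n-k$, so $\om^e$ is a primitive $d$-th root of unity with $d=(2n-k)/e$. Unlike the previous lemmas where the ambient root of unity was $2n$-th, here the modulus is $2n-k$, which changes the divisibility bookkeeping. First I would dispose of $e=0$, which is immediate from~\eqref{eq:Ti(n,k)}. For $e>0$ a divisor of $2n-k$, set $\rho:=\om^e$, a primitive $d$-th root of unity.

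**Plan**

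The plan is to evaluate the left-hand side of~\eqref{eq:5} at $q=\om^e$ by the same two-stage strategy used in Lemmas~\ref{lem:1}, \ref{lem:8}, and~\ref{lem:2}: first pin down the values of $d:=(2n-k)/e$ for which the specialisation is non-zero, thereby settling the fourth case, and then compute the surviving values explicitly by forming ratios of $q$-integers whose arguments differ by a multiple of~$d$. The case $e=0$ is immediate from~\eqref{eq:Ti(n,k)}, so I assume $e>0$ is a divisor of $2n-k$ and set $\rho:=\om^e$, a primitive $d$-th root of unity with $d\ge 2$.

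For the vanishing analysis, I would write the multiplicity $g_d$ of $\rho$ as a root of the expression in~\eqref{eq:5}, namely
\[
g_d=\chi(d\mid 2n-k)-\fl{\tfrac n d}-\fl{\tfrac{n-1}d}
+\fl{\tfrac{n-2}d}+\fl{\tfrac nd}
-\fl{\tfrac{k-2}d}-\fl{\tfrac kd}
-\fl{\tfrac{n-k+1}d}-\fl{\tfrac{n-k}d},
\]
using~\eqref{qint0} and the fact that $\rho$ is a simple root of $[\alpha]_q$ exactly when $d\mid\alpha$. Since $d\mid 2n-k$ by hypothesis, the term $\chi(d\mid 2n-k)$ equals~$1$. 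I expect that $g_d=0$ forces, just as in Proposition~\ref{prop:RRR}, either $d=2$ with $n+1$ and $k$ even, or $d\mid n$ (which here also entails $d\mid k$, since $d\mid 2n-k$ and $d\mid 2n$ give $d\mid k$); all other $d$ give $g_d>0$ and hence a zero specialisation. This is most cleanly done by introducing the fractional parts $N=\{(n-2)/d\}$ and $K=\{(k-2)/d\}$ as in Lemmas~\ref{lem:6P} and~\ref{lem:2}, rewriting all floors in terms of $N$, $K$, and the floors of $(n-2)/d$ and $(k-2)/d$, and then doing the finite case analysis on the possible fractional parts.

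For the explicit evaluation in the two surviving cases, I would apply~\eqref{eq:SM} factor by factor. In the case $d\mid n$ (so $d\mid k$ as well), I group the $q$-Catalan-type ratios as in~\eqref{grouped-d}, so that numerator and denominator arguments are congruent mod~$d$, and read off the limit from~\eqref{eq:SM}; the extra factor $[2n-k]_q$ contributes $(2n-k)/d$ relative to the vanishing factor it cancels, which after simplification matches $\frac{2n-k}{n}\binom{n/d-1}{k/d-1}\binom{n/d}{k/d}$. The case $d=2$, $\rho=-1$, is handled by the grouping~\eqref{grouped-2}; here $[2n-k]_q$ survives because $2n-k$ is odd (as $n+1$ and $k$ even force $2n-k$ odd), so $[2n-k]_{q}$ evaluates to a unit contribution and the rest produces the claimed second-case expression. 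The main obstacle I anticipate is the careful bookkeeping of which factor of $[2n-k]_q$ in the numerator cancels which vanishing factor in the denominator $[n]_q[n-1]_q$, and getting the resulting constant $(2n-k)/n$ (resp.\ the $d=2$ prefactor) exactly right; the divisibility logic itself is routine once the $d\mid k$ consequence is noted, but tracking the prefactors through~\eqref{eq:SM} demands attention. As a reassuring cross-check, one can verify that these evaluations agree with the alternative derivation via Proposition~\ref{prop:RRR} and the three-type enumeration carried out in the proof of Theorem~\ref{thm:int}, which must produce the same three numbers.
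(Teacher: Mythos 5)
Your overall strategy coincides with the paper's: determine the multiplicity of $\rho=\om^e$ as a root of the expression via~\eqref{qint0}, settle the vanishing cases by a finite analysis of fractional parts, then evaluate the surviving cases by pairing $q$-integers congruent modulo~$d$ through~\eqref{eq:SM}. However, two concrete steps as written are wrong and would derail the computation. First, your displayed formula for $g_d$ is incorrect: the factors $[n]_q$ and $[n-1]_q$ in the denominator are single $q$-integers, so they contribute $-\chi\big(d\mid n\big)-\chi\big(d\mid n-1\big)$ to the multiplicity, not $-\fl{n/d}-\fl{(n-1)/d}$, and the numerator $[n-1]_q!$ of the first $q$-binomial contributes $\fl{(n-1)/d}$, not $\fl{(n-2)/d}$. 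The correct multiplicity is
\[
1+\fl{\tfrac{n-2}d}+\fl{\tfrac{n-1}d}-\fl{\tfrac{k-2}d}-\fl{\tfrac kd}-\fl{\tfrac{n-k+1}d}-\fl{\tfrac{n-k}d},
\]
which exceeds your $g_d$ by $2\fl{(n-1)/d}$; for instance with $n=6$, $k=4$, $d=2$ your formula predicts a pole of order~$4$ at $q=-1$, contradicting Lemma~\ref{lem:3P}. Any case analysis built on your $g_d$ would therefore identify the wrong set of surviving~$d$.

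Second, in the case $d=2$ with $n+1$ and $k$ even, the integer $2n-k$ is \emph{even}, not odd as you claim, so $[2n-k]_q$ \emph{vanishes} at $\rho=-1$ and cannot be discarded as a unit. It must be paired with the vanishing factor $[n-1]_q$ (note $n-1$ is even) via~\eqref{eq:SM}, which is precisely what produces the prefactor $\frac{2n-k}{n-1}$ in the second alternative of~\eqref{eq:5}; your treatment would yield the wrong constant there. Finally, the vanishing analysis itself is only sketched: the paper's argument hinges on the observation that $d\mid(2n-k)$ forces $2N-K+\frac2d$ to be an integer in $\{0,1,2\}$, which is the device that makes the case analysis finite and rules out, e.g., $K=1-1/d$; this key step is absent from your outline.
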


\begin{proof}
The case $e=0$ being obvious, we assume that
$e$ is a positive divisor of $2n-k$ with $e<2n-k$.
Then $\om^e$ is a primitive $d$-th root of unity. By~\eqref{qint0}, its multiplicity, as a root of the conjectured cyclic sieving polynomial, is 
\[
h_d:=1+ \fl{\frac {n-2}d} + \fl{\frac{n-1}d}- \fl{\frac{k-2}d} -\fl{\frac k d}
- \fl{\frac{n-k+1} d} -\fl{\frac{n-k}d}.
\]
Introducing the fractional parts $N:=\{(n-2)/d\}$ and $K:=\{(k-2)/d\}$, this can be rewritten as
\[
  h_d=1+ \fl{N+\dfrac 1 d} - \fl{K+\frac 2 d} -\fl{N-K+\dfrac 1 d} -\fl{N-K}.
\]
The left-hand side of~\eqref{eq:5} will be non-zero if and only if $h_d = 0$.
The condition $d\mid (2n-k)$ means that
$\Delta:=2N-K+\frac 2 d$ is an integer, and, given that $N$ and $K$ are in $[0, 1-\frac 1d]$, this integer can only be $0$, $1$ or $2$.

The simplest case is when $\Delta=2$, which forces $K=0$ and $N=1-\frac 1d$. Then
\[
  h_d=1+ 1 - \fl{\frac 2 d} -1 -0=  1 - \fl{\frac 2 d}.
\]
This vanishes only for $d=2$. Combined with $K=0$ and
$N=1-\frac 1d=1-\frac 12$,
this implies that~$k$ and $n+1$ are even. This is the second case in~\eqref{eq:5}.

Now, if $\Delta=0$ or $1$, we have, by definition of~$\Delta$,
\[
2N \le K+1 - \frac 2 d \le 2 - \frac 3 d ,
\]
so that $N \le 1-\frac 2 d$.
Hence
\begin{align*}
  h_d&= 1+ 0 - \fl{K+\frac 2 d}- \fl{\Delta -N - \frac 1 d } -\fl{\Delta-N-\frac 2 d} \\
     &=1 - \fl{K+\frac 2 d}- 2 \Delta + \cl{N+\frac 1 d} + \cl{N+\frac 2 d}\\
  &= 3 -2 \Delta -\fl{K+\frac 2 d}.
\end{align*}
Therefore, $h_d$ vanishes if and only if $\Delta=1$ and either $K=1-\frac 2d$ or $K=1-\frac 1d$. In the former case,
we have $2N=K+1-\frac 2d= 2-\frac 4d$, that is, $N=1-\frac 2d$. By definition of~$N$ and~$K$, this means that $d\mid n$ and $d\mid k$; this is the third case of~\eqref{eq:5}. In the latter case,
we have $2N=2-\frac 3d$, that is, $N=1-\frac 3{2d}$, which is impossible since~$dN$ must be an integer.

We have found all cases for which the left-hand side of~\eqref{eq:5} is non-zero, and thus proved the  fourth case of~\eqref{eq:5}. It is then an easy task to evaluate
the left-hand side of~\eqref{eq:5}
 in the second and third case, using the simple fact~\eqref{eq:SM}.
\end{proof}

\section{Trees with given vertex degrees under ordinary rotation}  
\label{sec:ord_deg}

Let $\bn=(n_1,n_2,\dots)$ be a sequence of non-negative
integers satisfying~\eqref{tree-cond}.  In particular, only finitely many of the $n_i$'s are non-zero. In this section we focus on 
the set $\T(\bn)$ of rooted plane trees with degree distribution~$\bn$.
The cardinality of $\T(\bn)$ is given by~\eqref{eq:T(n,d)}, where $n+1=\sum_i n_i$ is the total number of nodes. Note that $n_m=0$ for $m> n$.
  We consider the action on $\T(\bn)$ of the cyclic group of order~$2n$ generated by the ordinary rotation~$R$.

In the statement of the theorem below and as well in the rest of the
article, we need the {\it $q$-multinomial coefficient} defined by
\[
\begin{bmatrix} M\\N_1,N_2,\dots,N_\ell\end{bmatrix}_q
=\frac {[M]_q!} {[N_1]_q!\,[N_2]_q!\cdots[N_\ell]_q!},
\]
given that $N_1+N_2+\dots+N_\ell=M$.
Of course this specialises to the usual multinomial coefficient at $q=1$. If $(N_1, N_2, \ldots)$ is a sequence of non-negative integers having a finite sum $M$, so that only finitely many of the $N_i$'s are non-zero, we
write
\[
    \begin{bmatrix} M\\N_1,N_2,\dots\end{bmatrix}_q= 
    \frac {[M]_q!} {\prod_{i\ge 1} [N_i]_q!}.
  \]

\begin{theorem} \label{thm:ord_deg}
  Let
  $\bn=(n_1,n_2,\dots)$
be a sequence of non-negative integers satisfying~\eqref{tree-cond}. Write $n+1=\sum_i n_i$. Then the cyclic group of order~$2n$ acts by the rotation~$R$ on~$\T(\bn)$, and 
 the triple
\[\left(\T(\bn),\langle R\rangle,\frac {1+q^{n}}
{[n+1]_q}
\begin{bmatrix}
  n+1\\n_1,n_2,\dots
\end{bmatrix}_q\right)
\] 
exhibits the cyclic sieving phenomenon.
\end{theorem}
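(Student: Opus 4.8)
The plan is to follow the scheme already used for Theorems~\ref{thm:ord} and~\ref{thm:ord_leaves}: invoke Lemma~\ref{lem:divisors} to reduce the asserted cyclic sieving identity to the computation of $\Fix_{R^e}(\T(\bn))$ for $e=0$ and for $e$ a divisor of $2n$ with $0<e<2n$; delegate polynomiality and non-negativity of the sieving polynomial to a first lemma, and its evaluation at roots of unity to a second lemma; so that the body of the theorem becomes pure enumeration. The case $e=0$ is immediate from~\eqref{eq:T(n,d)}. It is convenient to record the factorisation
\[
\frac{1+q^{n}}{[n+1]_q}\begin{bmatrix} n+1\\ n_1,n_2,\dots\end{bmatrix}_q
=\frac{[2n]_q}{[n]_q\,[n+1]_q}\begin{bmatrix} n+1\\ n_1,n_2,\dots\end{bmatrix}_q
=\frac{[2n]_q\,[n-1]_q!}{\prod_i [n_i]_q!},
\]
which is the form I would feed to the cyclotomic and root-of-unity computations.

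For $0<e<2n$ with $e\mid 2n$, set $d=2n/e$ and apply Proposition~\ref{prop:structure}. In the odd case $e=n$ (so $d=2$, $n$ odd), the map $\Phi$ identifies the fixed trees with trees $\sigma\in\T(\tfrac{n+1}2)$ of a prescribed degree distribution $\mathbf{m}$ carrying a marked non-root leaf; gluing two mirror halves forces $m_1=\tfrac{n_1}2+1$, $m_i=\tfrac{n_i}2$ for $i\ge 2$, hence every $n_i$ must be even. I would count $(\text{tree},\text{marked leaf})$ pairs and subtract those whose marked leaf is the root, giving $m_1\,|\T(\mathbf{m})|-|\T_l(\mathbf{m})|$, then simplify via~\eqref{eq:T(n,d)} and~\eqref{eq:Tl(n,d)}. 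In the even case ($d\mid n$), the map $\Psi_d$ identifies the fixed trees with a tree $\sigma\in\T(\tfrac nd)$ equipped with a marked node $c$; since $\tau$ consists of $d$ copies of the sector around $c$, its degree distribution satisfies $n_i\equiv\chi(i=\ell)\pmod d$, where $\ell$ is the central degree, a multiple of $d$. The crucial point is that the identity $\sum_i i\,n_i=2n$ forces the unique index $\ell$ with $n_\ell\not\equiv 0\pmod d$ to be a multiple of $d$, so the central degree $\ell$, and with it the degree $\ell/d$ of the marked node in $\sigma$, are \emph{determined} by $\bn$. No summation over central degrees is then needed, and the count collapses to $m_{\ell/d}\,|\T(\mathbf{m})|$ for the halved-and-shifted distribution $\mathbf{m}$. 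All remaining values of $e$ leave no tree fixed.

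For the polynomiality lemma the multiplicity of a primitive $d$-th root of unity is
\[
e_d=\chi\big(d\mid 2n\big)+\fl{\tfrac{n-1}d}-\sum_i\fl{\tfrac{n_i}d}
=\chi\big(d\mid 2n\big)+\tfrac1d\big(r-((n-1)\bmod d)-2\big),
\]
with $r=\sum_i(n_i\bmod d)$. Here the tree condition does the real work: the only way to reach $e_d=-1$ would require $d\nmid 2n$ together with every $n_i\equiv 0\pmod d$ and $n\equiv-1\pmod d$, which contradicts $\sum_i i\,n_i=2n\equiv-2\pmod d$ for $d\ge 3$ (the cases $d=2$ being checked directly), so $e_d\ge 0$ throughout. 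Non-negativity of the coefficients transfers from the mechanism of Lemma~\ref{lem:6P} once one observes that $\begin{bmatrix} n+1\\ n_1,n_2,\dots\end{bmatrix}_q$ is a product of $q$-binomial coefficients, hence reciprocal and unimodal: then $(1-q)\begin{bmatrix} n+1\\ n_1,\dots\end{bmatrix}_q$ has non-negative coefficients up to its middle, multiplication by the non-negative series $\tfrac{1+q^{n}}{1-q^{n+1}}$ preserves this, and reciprocity of the (already known) polynomial finishes. The evaluation lemma is then handled exactly as in Lemmas~\ref{lem:1} and~\ref{lem:8}, grouping the $q$-integers of $[2n]_q[n-1]_q!/\prod_i[n_i]_q!$ into ratios whose arguments are congruent modulo $d$ and applying~\eqref{eq:SM}.

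The step I expect to be the main obstacle is not polynomiality but the even case: carrying the full degree distribution through $\Psi_d$, proving that the tree condition pins down a single admissible central degree, and checking that the resulting count $m_{\ell/d}\,|\T(\mathbf{m})|$ coincides, index by index, with the root-of-unity evaluation produced by~\eqref{eq:SM}. The delicate bookkeeping is that the marked node simultaneously lowers its degree from $\ell$ to $\ell/d$ and contributes the lone non-zero residue of $\bn$ modulo $d$, so the same arithmetic fact $\sum_i i\,n_i=2n$ governs existence, the value of $e_d$, and the shape of $\mathbf{m}$; making these three appearances match cleanly is where the care is needed, whereas the $\Phi$ case and the non-negativity argument are essentially routine adaptations of the earlier sections.
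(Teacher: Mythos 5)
Your proposal is correct and follows essentially the same route as the paper: reduction via Lemma~\ref{lem:divisors}, the maps $\Phi$ and $\Psi_d$ with degree bookkeeping (including the subtraction of root-marked leaves in the odd case and the observation that the tree condition pins down a unique central degree $\ell$ with $d\mid\ell$ in the even case), and polynomiality via cyclotomic exponents, where your form $\chi(d\mid 2n)+\fl{\sum_iN_i-\tfrac2d}$ is equivalent to the paper's $\chi(d\mid 2n)-\chi(d\mid n)+\fl{\sum_iN_i-\tfrac1d}$ and the tree condition rules out the exponent $-1$ in the same way. The non-negativity and root-of-unity evaluation arguments likewise match Lemmas~\ref{lem:10} and~\ref{lem:9}.
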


\begin{example}
For $\bn=(3,0, 1, 0, \ldots)$ and $\bn=(2, 2, 0, \ldots)$, corresponding to the trees of Figure~\ref{fig:example}, we obtain the same polynomials as those given by Theorem~\ref{thm:ord_leaves}, namely $1+q^3$ and $1+q^2+q^4$, respectively.
  \end{example}

\begin{proof}[Proof of Theorem \ref{thm:ord_deg}]
Polynomiality and non-negativity of coefficients of the cyclic
sieving polynomial are proved in Lemma~\ref{lem:10} below.

Let $\om$ be a primitive $2n$-th root of unity.
We have to prove 
\begin{equation} \label{eq:14}
\Fix_{R^e}(\T(\bn))=
\frac {1+q^{n}} {[n+1]_q}
\begin{bmatrix} n+1\\n_1,n_2,\dots
\end{bmatrix}_q
\Bigg\vert_{q=\om^ e}
\end{equation}
for all integers $e$.
Again,
it suffices to prove this for values of $e\in \llbracket 0, 2n-1\rrbracket$ equal to $0$ or dividing $2n$. Lemma~\ref{lem:9} below gives the corresponding values of the right-hand side of~\eqref{eq:14}.

\medskip

For $e=0$, all trees of $\T(\bn)$ are fixed by~$R^e$, and their number
is given by~\eqref{eq:T(n,d)}. This coincides with the value at $q=1$ of the right-hand side of~\eqref{eq:14}.

\medskip
We now assume that $0<e<2n$, with $e$ a divisor of~$2n$. We revisit the proof of Theorem~\ref{thm:ord}
while keeping track of vertex degrees.

If $e$ is odd and equals $n$, the map $\Phi$ sends
trees of $\T(\bn)$ fixed by~$R^e$ bijectively 
to rooted trees 
having $\frac {n+1} {2}+1$~nodes, among which
$\frac {n_1}2+1$ leaves and $\frac {n_i}2$~nodes of degree~$i$ for $i\ge2$, and 
in which one of the {\it non-root\/} leaves is marked.  Observe that each $n_i$ must be even.
By~\eqref{eq:T(n,d)} with $n$ replaced by $\frac {n+1} {2}$, $n_1$
replaced by $\frac{n_1}{  2}+1$, and $n_i$ replaced by $\frac {n_i} {2}$ for $i\ge2$, there are
\[
\left(\tfrac {n_1} {2}+1\right)\cdot
\frac {2} {\frac {n+1} {2}+1}
\binom {\frac {n+1} {2}+1}{\frac {n_1} {2}+1,\frac {n_2} {2},\ldots
}
\]
ways for choosing such a tree  and marking one of the $\frac {n_1} {2}+1$
leaves. We must however subtract the number of those trees where the
marked leaf is the root. By~\eqref{eq:Tl(n,d)} with
$n$ replaced by~$\frac {n+1} {2}$, $n_1$ replaced
by~$\frac {n_1 } {2}+1$, and $n_i$ replaced by $\frac {n_i} {2}$ for
$i\ge2$, this number equals
\[
\frac {1} {\frac {n+1} {2}}
\binom {\frac {n+1} {2}}{\frac {n_1} {2},\frac {n_2} {2},\dots
}.
\]
The difference of these two numbers simplifies to
\[
\frac {2\frac {n+1} {2}-1} {\frac {n+1} {2}}
\binom {\frac {n+1} {2}}{\frac {n_1} {2},\frac {n_2} {2},\dots
 },
\]
which is equivalent to
the expression in the second alternative
of~\eqref{eq:13}. This proves~\eqref{eq:14} in the present case.

\medskip
Now let $e$ be even with $e=2n/d$. Recall from the proof of Proposition~\ref{prop:structure} that a tree of $\T(\bn)$ fixed by~$R^e$ has a central node of degree~$\ell$ divisible by~$d$, and is obtained by gluing~$d$ copies of the same tree around the central vertex. This implies in particular that $n_\ell\equiv 1 \ ({\rm mod } \ d)$, while each $n_i$ is divisible by $d$ for $i\neq \ell$. This observation constrains the degree distribution~$\bn$ as in the third case of Lemma~\ref{lem:9} below, and
it implies that, given~$\bn$ and~$e$, there is at most one possible value for the degree~$\ell$ of the central node.
Moreover, these conditions on the~$n_i$'s, combined with~\eqref{tree-cond}, imply that $d\mid \ell$, which is therefore redundant. Analogously, these conditions imply that $d$ divides $n=-1+\sum n_i$, meaning that $e=2n/d$ is even, which is also redundant, given the conditions on the $n_i$'s.
By Proposition~\ref{prop:structure}, the map $\Psi_d$ sends
trees of $\T(\bn)$ fixed by~$R^e$ bijectively 
to rooted trees with a total of $\frac e 2+1$ nodes, including
\begin{itemize}
\item 
$\bar n_i:= \frac {n_i} {d}$~nodes of degree~$i$ for $i\ne\{\ell/d,\ell\}$,
\item  $\bar n_\ell:=\frac {n_\ell-1} {d}$ nodes of degree~$\ell$,
  \item  $\bar n_{\ell/d}:=\frac {n_{\ell/d}} {d}+1$  nodes of degree~$\ell/d$,
 one of which  (corresponding to~$c$) is marked.
\end{itemize}
According to~\eqref{eq:T(n,d)}, the number of such trees is equal to
\[
\bar n_{\ell/d}\cdot
\frac {2} {\frac {e} {2}+1}
\binom {\frac {e} {2}+1}{\bar n_1, \bar n_2, \dots}\\
=
2 \binom {\frac {e} {2}}{\frac{ n_1}d ,\dots,
  \frac{n_{\ell-1}} d,\frac {n_\ell-1} { d},\frac{ n_{\ell+1}}d,\dots}.
\]
This number coincides with the expression in
the third alternative 
of~\eqref{eq:13}, and is thus proving~\eqref{eq:14} also in this case.

\medskip
For all other values of $\bn$ and $e$,  there are no
trees fixed by $R^e$ in~$\T (\bn)$.
This completes the proof of the theorem.
\end{proof}

\begin{lemma} \label{lem:10}
Let $\bn=(n_1,n_2,\dots)$ be a sequence of non-negative
integers satisfying~\eqref{tree-cond}.
Write $n+1:=\sum_i n_i$. Then the expression
\begin{equation} \label{eq:Pol3} 
\frac {1+q^{n}} {[n+1]_q}
\begin{bmatrix}
  n+1\\n_1,n_2,\dots
\end{bmatrix}_q
\end{equation}
is a polynomial in $q$ with non-negative coefficients.
\end{lemma}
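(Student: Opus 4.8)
The plan is to reproduce the two-step scheme of Lemma~\ref{lem:6P}: prove first that \eqref{eq:Pol3} is a polynomial by checking that every cyclotomic factor occurs with non-negative multiplicity, and then promote this to non-negativity of all coefficients through a reciprocity and unimodality argument.

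For polynomiality I would first clear the factor $1+q^n=[2n]_q/[n]_q$ and rewrite
\[
\frac {1+q^{n}} {[n+1]_q}\begin{bmatrix} n+1\\n_1,n_2,\dots\end{bmatrix}_q
=\frac{[2n]_q\,[n-1]_q!}{\prod_{i\ge 1}[n_i]_q!}.
\]
Writing $[m]_q=\prod_{d\ge 2,\ d\mid m}\Phi_d(q)$ as in Lemma~\ref{lem:6P}, the multiplicity of a primitive $d$-th root of unity ($d\ge 2$) in this expression is
\[
e_d=\chi(d\mid 2n)+\fl{\tfrac{n-1}{d}}-\sum_{i\ge 1}\fl{\tfrac{n_i}{d}},
\]
and the task is to show $e_d\ge 0$. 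I would introduce the defect $\delta_d:=\fl{(n+1)/d}-\sum_i\fl{n_i/d}$, which is $\ge 0$ because $\sum_i\fl{n_i/d}\le\fl{(\sum_i n_i)/d}=\fl{(n+1)/d}$, so that $e_d=\chi(d\mid 2n)-\big(\fl{(n+1)/d}-\fl{(n-1)/d}\big)+\delta_d$, the middle difference being the number of multiples of $d$ in $\{n,n+1\}$, hence $0$ or $1$.

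The case analysis is then short. If $d$ divides neither $n$ nor $n+1$ the difference is $0$ and $e_d=\chi(d\mid 2n)+\delta_d\ge 0$; if $d\mid n$ then $d\mid 2n$ so $\chi=1$ while the difference is $1$, giving $e_d=\delta_d\ge 0$; and if $d\mid n+1$ with $d=2$ one still has $\chi(2\mid 2n)=1$, so $e_2=\delta_2\ge 0$. \textbf{The one delicate case, which I expect to be the main obstacle, is $d\ge 3$ with $d\mid n+1$}: here $2n\equiv-2\pmod d$ forces $\chi(d\mid 2n)=0$, so $e_d=\delta_d-1$ and I must show $\delta_d\ge 1$. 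Since $d\mid n+1=\sum_i n_i$, one has $\delta_d=\frac1d\sum_i(n_i\bmod d)$; were this zero, every $n_i$ would be divisible by $d$, whence $\sum_i(i-2)n_i\equiv 0\pmod d$, contradicting the tree condition $\sum_i(i-2)n_i=-2$ of~\eqref{tree-cond} for $d\ge 3$. Thus $\delta_d\ge 1$ and $e_d\ge 0$. This is the unique point where~\eqref{tree-cond} is indispensable.

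For non-negativity of the coefficients I would follow Lemma~\ref{lem:6P}, in fact more directly since no parameter shift is needed. Factoring the $q$-multinomial as a telescoping product $\prod_j\begin{bmatrix} n_1+\dots+n_j\\n_j\end{bmatrix}_q$ of $q$-binomials, each reciprocal and unimodal, the product $Q(q):=\begin{bmatrix} n+1\\n_1,n_2,\dots\end{bmatrix}_q$ is reciprocal and unimodal by \cite[Thm.~3.9]{AndrAF}; call its degree $A$. Using $\frac{1+q^n}{[n+1]_q}=\frac{1+q^n}{1-q^{n+1}}(1-q)$, I write \eqref{eq:Pol3} as $\frac{1+q^n}{1-q^{n+1}}(1-q)\,Q(q)$. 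Unimodality makes the coefficients of $(1-q)Q$ non-negative in all degrees $\le\cl{A/2}$, and multiplication by the power series $\frac{1+q^n}{1-q^{n+1}}=(1+q^n)\sum_{j\ge0}q^{(n+1)j}$, whose coefficients are non-negative, preserves this in degrees $\le\cl{A/2}$. Finally \eqref{eq:Pol3} is a polynomial by the first part and is reciprocal of degree $\deg(1+q^n)+\deg Q-\deg[n+1]_q=A$, so reciprocity carries non-negativity from the lower half of the coefficients to the upper half, completing the proof.
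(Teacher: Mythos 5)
Your proof is correct and follows essentially the same two-step strategy as the paper: a cyclotomic-multiplicity count for polynomiality, in which condition~\eqref{tree-cond} is invoked exactly as in the paper to rule out the case where all $n_i$ are divisible by a $d\ge 3$, followed by the reciprocal-and-unimodal argument to upgrade polynomiality to non-negativity of the coefficients. The only cosmetic difference is that you organize the multiplicity count via the defect $\delta_d=\fl{(n+1)/d}-\sum_i\fl{n_i/d}$ and an explicit four-way case split, whereas the paper works with the fractional parts $N_i=\{n_i/d\}$ and isolates the single problematic case $\sum_i N_i=0$; the underlying computation is identical.
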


\begin{proof}
To see that the expression in \eqref{eq:Pol3} is a polynomial, we write
\begin{equation} \label{eq:1+q^{n-1}4} 
\frac {1+q^{n}} {[n+1]_q}
\begin{bmatrix}
  n+1\\n_1,n_2,\dots
\end{bmatrix}_q
= \frac {[2n]_q} {[n]_q[n+1]_q}
\begin{bmatrix}
  n+1\\n_1,n_2,\dots
 \end{bmatrix}_q
=\prod _{d\ge2} \Phi_d(q)^{i_d},
\end{equation}
using again the notation $\Phi_d(q)$ for the $d$-th cyclotomic polynomial,
where
\begin{equation} \label{eq:i_d} 
i_d=\chi\big(d\mid 2n\big)-\chi\big(d\mid n\big)
+  \fl{\frac {n} {d}}
-\sum_{i \ge 1}
\fl{\frac {n_i} {d}}.
\end{equation}
We recall that $n=-1+\sum_i n_i$. 

From now on, let $d\ge2$. 
We write
$N_i=\{n_i/d\}$ for $i\ge1$, where $\{\al\}:=\al-\fl{\al}$ denotes
the fractional part of~$\al$, as before.
Using this notation,
we may rewrite Equation~\eqref{eq:i_d} as
\beq
  i_d
=\chi\big(d\mid 2n\big)-\chi\big(d\mid n\big)
+\fl{\sum_{i\ge1}N_i-\frac {1} {d}}.
\label{eq:i_d2}
\eeq
This is non-negative except possibly if $\sum_{i\ge1}N_i=0$, in which case the third term on the right-hand side above equals $-1$.
This last condition is
equivalent to all $N_i$'s being equal to zero, which
in its turn is equivalent to all $n_i$'s being
divisible by~$d$. It then follows from~\eqref{tree-cond}
that $d=2$.
Moreover, $n=-1+\sum n_i$ is then odd. Therefore, the first term on the right-hand
side of~\eqref{eq:i_d2} equals~1, while the second is~$0$. This proves non-negativity of~$i_d$, and establishes the polynomiality part of the lemma.

\medskip
We will now show that the expression in \eqref{eq:Pol3} is a
polynomial with non-negative coefficients.
In order to do this,
we proceed in complete analogy to the corresponding part in the proof
of Lemma~\ref{lem:6P}. Namely, since it is well-known that
$q$-binomial 
coefficients are reciprocal and unimodal polynomials, since
the product of reciprocal and
unimodal polynomials is also reciprocal and unimodal, and since
a $q$-multinomial coefficient
can be written as a product of
$q$-binomial coefficients, $q$-multinomial coefficients are
reciprocal and unimodal.
In particular,
\[
\begin{bmatrix}
  n+1\\n_1,n_2,\dots
\end{bmatrix}_q
\]
is a reciprocal and unimodal polynomial in~$q$. It is of degree~$C$,
say, where an explicit formula for~$C$ could be given (namely the
second elementary symmetric polynomial in the $n_i$'s),
which however is irrelevant.
Hence, the
coefficient of~$q^i$ in
\[
(1-q)
\begin{bmatrix}
  n+1\\n_1,n_2,\dots
\end{bmatrix}_q
\]
is non-negative for $0\le i\le\cl{C/2}$.
Therefore the same must be true for the series
\[
\frac {1+q^{n}} {1-q^{n+1}}\times(1-q)
\begin{bmatrix}
  n+1\\n_1,n_2,\dots
\end{bmatrix}_q
=
\frac {1+q^{n}} {[n+1]_q}
\begin{bmatrix}
  n+1\\n_1,n_2,\dots
\end{bmatrix}_q.
\]
However, we know from above that the last expression is a polynomial.
Since it is reciprocal and has degree~$C$, we infer that also the
other coefficients of the polynomial must be non-negative.

\medskip
This completes the proof of the lemma.
\end{proof}

\begin{lemma} \label{lem:9}
Let $\bn=(n_1,n_2,\dots)$ be a sequence of non-negative
integers satisfying~\eqref{tree-cond}. Write $n+1=\sum_i n_i$. Let $\om$ be a primitive $2n$-th root of unity, and let $e$ be a
non-negative integer $<2n$ being equal to~$0$ or dividing~$2n$.
In the latter case, write $d:=2n/e$, so that $d\ge 2$.
Then we have
\begin{multline} \label{eq:13} 
\frac {1+q^{n}} {[n+1]_q}
\begin{bmatrix} n+1\\n_1,n_2,\dots
\end{bmatrix}_q
\Bigg\vert_{q=\om^e}\\
=
\begin{cases}
  \frac {2} {n+1}\binom {n+1}{n_1,n_2,\dots
  },&\kern-9pt
\text{if }e=0,\\[.5em]
\frac {n} {\frac {n+1} {2}}
\left(\begin{smallmatrix}
    \frac {n+1} {2}\\
    \frac {n_1} {2},\frac {n_2} {2},\dots
  \end{smallmatrix}\right),
&\kern-9pt
\text{if
  $d=2$
  and each $n_i$ is even}, \\[.5em]
2
\left(\begin{smallmatrix}
    \frac {e} {2}\\
    \frac {n_1} {d},\dots,\frac {n_{\ell-1}} {d},
  \frac {n_\ell-1} {d},\frac {n_{\ell-1}} {d},\dots
  \end{smallmatrix}\right),
&\kern-9pt
\text{if }
d\mid (n_\ell-1)
\text{ for some }\ell, 
\\[-.5em]
&\kern-9pt
\text{while   $d\mid n_i$
for $i\ne \ell$,}\\
0,&\kern-9pt
\text{otherwise.}
\end{cases}
\end{multline}
\end{lemma}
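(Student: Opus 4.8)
The plan is to follow the same three-step scheme as in the proofs of Lemmas~\ref{lem:1} and~\ref{lem:8}: dispose of $e=0$ by specialisation at $q=1$, then locate the divisors $d$ for which the evaluation is non-zero by reusing the multiplicity computation already carried out in Lemma~\ref{lem:10}, and finally read off the surviving values through repeated use of~\eqref{eq:SM}.

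For $e=0$ the left-hand side of~\eqref{eq:13} is just the value of~\eqref{eq:Pol3} at $q=1$, namely $\frac{2}{n+1}\binom{n+1}{n_1,n_2,\dots}$ by~\eqref{eq:T(n,d)}. So assume $e>0$ divides $2n$ and put $d=2n/e\ge 2$, so that $\rho:=\om^e$ is a primitive $d$-th root of unity. By the cyclotomic factorisation established in the proof of Lemma~\ref{lem:10}, the multiplicity of $\rho$ as a root of~\eqref{eq:Pol3} is the exponent $g_d$ of~\eqref{eq:g_d2} (which that lemma shows to be $\ge 0$), so the evaluation is non-zero precisely when $g_d=0$. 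Since $d\mid 2n$ we have $\chi(d\mid 2n)=1$, and~\eqref{eq:g_d2} reads $g_d=1-\chi(d\mid n)+\fl{\sum_i N_i-\tfrac1d}$ with $N_i=\{n_i/d\}$; I then split according to whether $d\mid n$.

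The genuinely new difficulty here, compared with the single-parameter Lemmas~\ref{lem:1} and~\ref{lem:8}, is the main obstacle: one must intersect the condition $g_d=0$ with the tree condition~\eqref{tree-cond} read modulo $d$, namely $\sum_i(i-2)r_i\equiv -2\pmod d$ where $r_i:=n_i\bmod d=dN_i$, in order to pin down the exact shape of the admissible degree distributions. If $d\mid n$, then $\sum_i n_i=n+1\equiv 1\pmod d$ forces $\sum_i r_i\equiv 1\pmod d$, so $\sum_i N_i\in\{\tfrac1d,1+\tfrac1d,2+\tfrac1d,\dots\}$; the equation $g_d=\fl{\sum_i N_i-\tfrac1d}=0$ then isolates $\sum_i N_i=\tfrac1d$, i.e.\ a single residue $r_\ell=1$ with all other $r_i=0$. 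This is exactly the third case of~\eqref{eq:13}, and feeding it back into~\eqref{tree-cond} yields the (redundant) divisibility $d\mid\ell$. If instead $d\nmid n$, then $g_d=1+\fl{\sum_i N_i-\tfrac1d}$ vanishes only when $\sum_i N_i=0$, that is, when every $n_i$ is divisible by $d$; plugging this into~\eqref{tree-cond} gives $d\mid 2$, hence $d=2$ (and $n=-1+\sum_i n_i$ is then automatically odd), which is the second case. In every remaining situation $g_d>0$ and the evaluation vanishes, establishing the fourth case.

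It then remains to compute the numerical values in the second and third cases, which is routine bookkeeping. As in~\eqref{grouped-2} and~\eqref{grouped-d}, I would write~\eqref{eq:Pol3} out as $\frac{[2n]_q}{[n]_q[n+1]_q}\cdot\frac{[n+1]_q!}{\prod_i[n_i]_q!}$, group its $q$-integers into ratios $[\alpha]_q/[\beta]_q$ with $\alpha\equiv\beta\pmod d$, and apply~\eqref{eq:SM} factor by factor: the ratios with $\alpha\equiv\beta\equiv 0\pmod d$ supply the rational prefactor, while every other ratio tends to $1$. The only care required is to track which residue class each index $n_i$ (respectively $n_\ell-1$) occupies, so that the surviving product collapses to $2\binom{e/2}{n_1/d,\dots,(n_\ell-1)/d,\dots}$ in the third case, and (taking $\rho=-1$) to $\frac{n}{(n+1)/2}\binom{(n+1)/2}{n_1/2,n_2/2,\dots}$ in the second; no idea beyond~\eqref{eq:SM} is needed.
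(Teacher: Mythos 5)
Your proposal is correct and follows essentially the same route as the paper: it reuses the exponent $g_d$ from Lemma~\ref{lem:10} to locate the non-vanishing cases, intersects with~\eqref{tree-cond} to pin down the admissible degree distributions, and evaluates via~\eqref{eq:SM} by pairing $q$-integers congruent modulo $d$. The only (cosmetic) difference is that you split first on whether $d\mid n$, whereas the paper splits first on whether $\sum_i N_i=0$; the two case analyses are equivalent.
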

Note that, in the third case, $d$
divides~$n$ since $n=-1+\sum n_i$, which means that $e$ is even.
Moreover, as already observed, Condition~\eqref{tree-cond} implies that $d\mid \ell$ in this case.

\begin{proof}[Proof of Lemma \ref{lem:9}]
The case $e=0$ is obvious, so we assume that
$e$ is a positive divisor of~$2n$ with $e<2n$.
Let $\om$ be a primitive $2n$-th root of unity. Then $\om^e$ is a primitive $d$-th root of unity. According to~\eqref{eq:1+q^{n-1}4},
in order to obtain a
non-zero value of the specialisation on the left-hand side
of~\eqref{eq:13}, the number $i_d$ given by~\eqref{eq:i_d} or~\eqref{eq:i_d2} must be zero. Note that, by assumption, we have $d\mid 2n$, so that
\begin{equation} \label{eq:d3}
 i_d= 1-\chi\big(d\mid n  \big)
+\fl{\sum_{i\ge1}N_i-\frac {1} {d}},
\end{equation}
where, as in the proof of Lemma~\ref{lem:10}, we denote by $N_i$ the fractional part of $n_i/d$.

As discussed earlier in the proof of Lemma~\ref{lem:10}, the third summand in~\eqref{eq:d3} is negative (and then equals $-1$) if and only if  $\sum_{i\ge1}N_i=0$. Then $d=2$, all $N_i$'s are even and  $n$ is odd, so that $i_d=0$ indeed.
This is the second case in~\eqref{eq:13}.
It is then an easy task, using the simple fact~\eqref{eq:SM}, to
determine the value on the left-hand side of~\eqref{eq:13} in this case.

If, on the other hand, we have $\sum N_i>0$, then the last summand
on the right-hand side of~\eqref{eq:d3} is non-negative.
Then $i_d=0$ if and only if
$d$ divides~$n$ and 
$\sum N_i\le 1$ (recall that each $N_i$ is of the form $r/d$, for $r\in \llbracket 0, d-1\rrbracket$).
However, the fact that  $d$ divides~$n=\sum n_i -1$ means that $\sum N_i -\frac 1 d$ is an integer: combined with $\sum N_i \le 1$, this forces $\sum N_i=\frac 1 d$. In other words,
all $n_i$'s but one are divisible by~$d$, and one of them, say $n_\ell$,
equals~1 modulo~$d$.
We are thus in the third case of~\eqref{eq:13}.
It is then not too difficult to obtain
the evaluation on the left-hand side of~\eqref{eq:13} in this case, using the simple fact~\eqref{eq:SM}. 
The key is to write
\begin{multline*}
  \frac {1+q^{n}} {[n+1]_q}
\begin{bmatrix} n+1\\n_1,n_2,\dots
\end{bmatrix}_q
=
\frac{1-q^{2n}}{1-q^n}\cdot \frac {1-q} {1-q^{n_\ell}}\\
\times\frac{(1-q^n)(1-q^{n-1}) \cdots (1-q^2)(1-q)}
   {(1-q^{n_\ell-1})(1-q^{n_\ell-2}) \cdots (1-q)
 \prod_{i\neq \ell}\big( (1-q^{n_i})(1-q^{n_i-1}) \cdots (1-q)\big)}.
\end{multline*}
Since $d\mid n$, it follows from~\eqref{eq:SM} that the first ratio
on the right-hand side tends to $2$ as $q\rightarrow \om^e$ (a primitive $d$-th root of unity). Similarly, since
$d\mid (n_\ell-1)$, by~\eqref{eq:SM}
the second ratio tends to $1$. The numerator and the denominator of the third ratio are both products of $n$ terms of the form $1-q^a$, where the exponents of~$q$,
when reduced modulo~$d$, equal $0, d-1, d-2, \ldots, 1, 0, d-1, \ldots , 1 , \ldots , 0, d-1, d-2, \ldots, 1$, in this order, both in the numerator and denominator. We can then split the third ratio as a product of $n$ elementary ratios to which~\eqref{eq:SM} applies.
\end{proof}

\section{Trees with given vertex degrees under $\de$-rotation}
\label{sec:delta}

In this section we fix $\delta\ge 1$ and a sequence $\bn=(n_1, n_2, \ldots)$ satisfying~\eqref{tree-cond}, and we consider the set $\T_\delta(\bn)$ of  rooted plane trees  whose root corner has degree~$\delta$ (assuming $n_\de>0$). The cardinality of this set is given by~\eqref{eq:Tdelta(n,d)}. The $\delta$-rotation~$R_\delta$ defined in Section~\ref{sec:rotations} acts on these trees by moving the root corner to the next corner  of degree~$\delta$, in counterclockwise order.

\begin{theorem} \label{thm:delta}
Let  $\de\ge 1$ be an integer.  Let $\bn=(n_1,n_2,\dots)$
be a sequence of non-negative integers satisfying~\eqref{tree-cond}
and $n_\de>0$.
Write $n+1=\sum_i n_i$.  The cyclic group of order~$\delta n_\delta$ acts by the rotation~$R_\de$ on~$\T_\de(\bn)$, and 
the triple
\[
  \left(\T_\de (\bn),\langle R_\de \rangle,\frac {[\de n_\de]_q}
{[n_\de]_q\,[n]_q} 
\begin{bmatrix}
  n\\n_1,\dots,n_{\de-1},n_\de-1,n_{\de+1},\dots
\end{bmatrix}_q
\right)
\]
exhibits the cyclic sieving phenomenon.
\end{theorem}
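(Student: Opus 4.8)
The plan is to follow the three-part template used throughout Sections~\ref{sec:ordinary}--\ref{sec:ord_deg}: I defer the polynomiality and coefficient non-negativity of the cyclic sieving polynomial to a first lemma, its evaluation at roots of unity to a second lemma, and devote the body of the proof to enumerating the trees of $\T_\de(\bn)$ fixed by a power of $R_\de$. By Lemma~\ref{lem:divisors} it suffices to compute $\Fix_{R_\de^e}(\T_\de(\bn))$ for $e=0$ and for $e$ a positive divisor of $\de n_\de$, and to match it against the specialisation of the polynomial at $\om^e$ for a primitive $(\de n_\de)$-th root of unity $\om$. The case $e=0$ is immediate from~\eqref{eq:Tdelta(n,d)}, which is the value at $q=1$. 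For $0<e<\de n_\de$ with $e\mid\de n_\de$, I set $d=\de n_\de/e$ and $f=2n/d$ and invoke the third statement of Proposition~\ref{prop:RRR}, by which $\tau\in\T_\de(\bn)$ is fixed by $R_\de^e$ if and only if it is fixed by $R^f$; the same proposition restricts the surviving values of $e$ to the half-turn case $e=\de n_\de/2$ (then $d=2$, $f=n$, with $\de n_\de$ and $n+1$ even) and the case $d\mid n$ (then $f$ is even).

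In the half-turn case I would use the bijection $\Phi$ of Proposition~\ref{prop:structure}. An edge-centred tree fixed by $R^n$ is two mirror copies glued along the central edge, so all $n_i$ must be even (otherwise the count is $0$), and $\Phi$ identifies such trees of $\T_\de(\bn)$ with the trees of degree distribution $\bn^\times=(n_1/2+1,\,n_2/2,\,n_3/2,\dots)$ that are rooted at a node of degree $\de$ and carry a marked non-root leaf; the root degree $\de$ is preserved because $\Phi$ leaves the degree of the root node untouched. Counting these trees with~\eqref{eq:Tdelta(n,d)}, where the marked leaf ranges over all leaves when $\de\ge2$ and over all non-root leaves when $\de=1$ (the root leaf being subtracted via~\eqref{eq:Tl(n,d)}), yields the second alternative of the evaluation lemma. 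In the case $d\mid n$ I would use the bijection $\Psi_d$, which sends an $R^f$-fixed tree, built from $d$ copies arranged around a central node $c$ of degree $\ell$, to a tree $\tau^\times$ with $e/2$ edges, a marked node $c$ of degree $\ell/d$, and degree distribution obtained from $\bn$ by dividing each $n_i$ by $d$ for $i\ne\ell$, replacing $n_\ell$ by $(n_\ell-1)/d$, and adding one node of degree $\ell/d$. Here $\ell$ is the unique degree with $n_\ell\equiv1\pmod d$, all other $n_i$ being $\equiv0\pmod d$, exactly as in the proof of Theorem~\ref{thm:ord_deg}. As in Theorem~\ref{thm:int}, the images split according to the position of the root: if the root node of $\tau$ is the centre (possible only when $\de=\ell$) the image is rooted at the marked node, now of degree $\ell/d$; otherwise it is rooted at a non-central node of degree $\de$ with $c$ marked. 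Summing these contributions, obtained from the degree-distribution formulae~\eqref{eq:Tl(n,d)}--\eqref{eq:Tdelta(n,d)} by double rooting, produces the third alternative of the evaluation lemma.

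For the two deferred lemmas I would argue exactly as in Lemmas~\ref{lem:6P} and~\ref{lem:10}. The polynomiality lemma writes the polynomial as $\frac1{[n]_q}\cdot\frac{[\de n_\de]_q}{[n_\de]_q}\begin{bmatrix} n\\ n_1,\dots,n_\de-1,\dots\end{bmatrix}_q$ and then as $\prod_{d\ge2}\Phi_d(q)^{h_d}$ with
\[
h_d=\chi(d\mid\de n_\de)-\chi(d\mid n_\de)+\fl{\tfrac{n-1}{d}}-\fl{\tfrac{n_\de-1}{d}}-\sum_{i\ne\de}\fl{\tfrac{n_i}{d}};
\]
one checks $h_d\ge0$ by a fractional-part analysis of these exponents. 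Coefficient non-negativity then follows because $\frac{[\de n_\de]_q}{[n_\de]_q}\begin{bmatrix} n\\ n_1,\dots,n_\de-1,\dots\end{bmatrix}_q$ is a product of reciprocal and unimodal polynomials (the palindrome $1+q^{n_\de}+\dots+q^{(\de-1)n_\de}$ times a $q$-multinomial coefficient), hence reciprocal and unimodal, so that $(1-q)$ times it has non-negative coefficients up to its centre; multiplying by the non-negative series $\frac1{1-q^n}$ and using that the result is a reciprocal polynomial forces all of its coefficients to be non-negative. The evaluation lemma would, as in Lemmas~\ref{lem:3} and~\ref{lem:9}, first determine the divisors $d$ for which $h_d=0$, so that the specialisation is non-zero, and then compute the value by grouping the $q$-integers into ratios that differ by a multiple of $d$ and applying~\eqref{eq:SM}, in the spirit of~\eqref{grouped-2} and~\eqref{grouped-d}.

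The step I expect to be the main obstacle is the bookkeeping in the $\Psi_d$ case. One must track the transformed degree distribution, with its simultaneous $+1$ at degree $\ell/d$ and $-1$ at degree $\ell$, and organise the case split according to whether the root lies at the centre, whether $\de=\ell$, and whether $\ell/d=\de$; the last coincidence makes the marked centre itself a node of root-degree and must be handled to avoid double counting. Verifying that the resulting sum of multinomial contributions collapses to the single term in the third alternative of the evaluation lemma is the delicate computation, just as the three-term sum was the crux in the proof of Theorem~\ref{thm:int}.
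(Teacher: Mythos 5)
Your enumeration of the fixed trees follows the paper's proof essentially step by step: the reduction via Proposition~\ref{prop:RRR}, the separate treatment of $\de=1$ in the half-turn case, and the three-way case split $\de=\ell/d$, $\de=\ell$, $\de\notin\{\ell/d,\ell\}$ in the $\Psi_d$ case (including forbidding the marked node as root when $\de=\ell/d$ and adding the extra contribution when $\de=\ell$) are exactly what the paper does; your evaluation lemma and the cyclotomic-exponent analysis coincide with Lemmas~\ref{lem:12} and~\ref{lem:11}.

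The one step that fails as written is the coefficient non-negativity argument. You claim that $\frac{[\de n_\de]_q}{[n_\de]_q}=1+q^{n_\de}+\cdots+q^{(\de-1)n_\de}$ is unimodal, and hence that its product with the $q$-multinomial coefficient is reciprocal and unimodal. This palindrome is \emph{not} unimodal once $\de\ge2$ and $n_\de\ge2$ (for instance $1+q^3$ has coefficient sequence $1,0,0,1$), and the product of a reciprocal polynomial with non-negative coefficients and a reciprocal unimodal polynomial need not be unimodal: $(1+q^4)(1+q+q^2)$ has coefficients $1,1,1,0,1,1,1$, and multiplying it by $1-q$ produces a negative coefficient exactly at the centre. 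So your assertion that $(1-q)$ times the product has non-negative coefficients up to its centre is unjustified. The paper's Lemma~\ref{lem:11} avoids this by applying $(1-q)$ only to the $q$-multinomial coefficient, whose unimodality is genuine, and then multiplying by the non-negative \emph{series} $\frac{[\de n_\de]_q}{[n_\de]_q}\cdot\frac{1}{1-q^{n}}$, which preserves non-negativity of the coefficients up to degree $\cl{E/2}$, where $E$ is the degree of the multinomial coefficient. The price of this rearrangement is that one must check that the centre of the resulting reciprocal polynomial lies within that range, i.e.\ that its degree $E-n+1+(\de-1)n_\de$ is at most $E$; this is the inequality $(\de-1)n_\de\le n-1$, which the paper derives from~\eqref{tree-cond}. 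Your sketch omits this degree check, and it becomes genuinely necessary once the unimodality claim is repaired.
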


\begin{example}
For $\de=1$ and $\bn=(3,0, 1, 0, \ldots)$ or $\bn=(2, 2, 0, \ldots)$, corresponding to the leaf-rooted  trees of Figure~\ref{fig:example}, we obtain the same polynomials as those given by Theorem~\ref{thm:ext}, namely~$1$ in both cases. For $\de=3$ and $\bn=(3,0, 1, 0, \ldots)$, we obtain $1$ as well, as in Theorem~\ref{thm:int}. For $\de=2$ and $\bn=(2, 2, 0, \ldots)$, we obtain $1+q^2$, again as in Theorem~\ref{thm:int}.
\end{example}

\begin{proof}[Proof of Theorem \ref{thm:delta}]
Polynomiality and non-negativity of coefficients of the cyclic
sieving polynomial are proved in Lemma~\ref{lem:11} below.

\medskip
Let $\om$ be a primitive $\de n_\de$-th root of unity.
We have to prove 
\begin{equation} \label{eq:15}
\Fix_{R_\de ^e}(\T_\de (\bn))=
\frac {[\de n_\de]_q} {[n_\de]_q\,[n]_q}
\begin{bmatrix} n\\n_1,\dots,n_{\de-1},n_\de-1,n_{\de+1},\dots
\end{bmatrix}_q
\Bigg\vert_{q=\om^ e}
\end{equation}
for all integers $e$. It suffices to prove this for values of~$e$ in $\llbracket 0, \de n_\de -1\rrbracket$ that equal $0$ or divide $\de n_\de$. Lemma~\ref{lem:12} below gives the corresponding values of the right-hand side of~\eqref{eq:15}.

\medskip
We begin with $e=0$. Clearly, all trees in $\T_\de (\bn)$
are invariant under
$R_\de ^0=\text{id}$. By~\eqref{eq:Tdelta(n,d)} we know the cardinality
of~$\T_\de (\bn)$, which agrees with the value of the right-hand side of~\eqref{eq:15} at $q=1$.

\medskip
We now assume $0<e<\de n_\de$, with $e$ a divisor of $\de n_\de$.
We rely in what follows on Proposition~\ref{prop:RRR}, which relates the rotations~$R_\de$ and~$R$. Recall in particular that the only possible values of~$e$ are $e= \de n_\de/2$, with $n$ odd (corresponding to $f=n$ in Proposition~\ref{prop:RRR}) and those for which
$\de n_\de/e$ divides~$n$ (corresponding to even values of~$f$). We also use Proposition~\ref{prop:structure}, as usual, to characterise trees fixed by a given power of~$R$.

If $e=\de n_\de/2$, with $n+1$ and $\de n_\de$ even, Proposition~\ref{prop:RRR} tells us that we just need to count trees of $\T_\de(\bn)$ fixed by~$R^{n}$. By Proposition~\ref{prop:structure}, the map $\Phi$ sends them bijectively to trees rooted at a corner of degree~$\de$, having a total of $1+ \frac {n+1} 2$ nodes, among which
\begin{itemize}
  \item $\frac {n_i} 2$ nodes of degree~$i$, for $i\ge 2$,
\item $1+\frac {n_1} 2$ leaves, one of which is marked, and is not the root node.
\end{itemize}
The latter condition indicates that the case $\delta=1$ has to be studied separately. Note that each $n_i$ must be even.

Assume first that $\de \ge 2$. By~\eqref{eq:Tdelta(n,d)} with $n$ replaced by $\frac {n+1} {2}$,
$n_1$ replaced by $\frac {n_1} {2}+1$, and~$n_i$ replaced by~$\frac {n_i}
{2}$ for $i\ge2$, the number of such trees is
\[ 
 \Fix_{{R_\de ^e}(\T_\de (\bn))}=  \left(\tfrac {n_1} {2}+1\right)
\cdot\frac {\de} {\frac {n+1} {2}}\binom {\frac {n+1} {2}}
 {\frac {n_1}2+1,\frac {n_2} {2},\dots,\frac {n_{\de-1}} {2},
   \frac {n_\de} {2}-1,\frac {n_{\de+1}} {2},\dots
 }.
\]
 If $\de=1$ we may use~\eqref{eq:Tdelta(n,d)} again, with the same replacements as above, to express the relevant number of  trees as
 \[
   \frac {n_1} 2 \cdot \frac 1 {\frac {n+1} 2}
   \binom{\frac {n+1} 2}{\frac{n_1} 2, \frac{n_2} 2, \ldots}.
   \]
We happily observe that, in both cases ($\delta\ge 2$ or $\de=1$), the expression that we obtain  is equivalent to the expression in the second alternative
of~\eqref{eq:16}. This proves \eqref{eq:15} in
the present case. 

\medskip

Now, if $d:=\de n_\de/e$ divides $n$, let us write $f=2n/d$.
Proposition~\ref{prop:RRR} tells us that we have to count trees of $\T_\de(\bn)$ fixed by~$R^f$. As in the proof of Theorem~\ref{thm:ord_deg}, these trees are centred at a node of degree~$\ell$, where $n_\ell-1$ is divisible by~$d$, while for $i\neq \ell$ the number~$n_i$ is divisible by~$d$. This constrains the degree distribution~$\bn$ as in the third case of Lemma~\ref{lem:12} below. By Proposition~\ref{prop:structure}, the map $\Psi_d$ sends
trees of $\T_\de(\bn)$ fixed by~$R^f$ bijectively 
to trees with a total of $\frac f 2+1$ nodes, among which
\begin{itemize}
\item $\bar n_i:=\frac {n_i} d$ nodes of degree~$i$ for $i\not \in \{\ell/d, \ell\}$,
\item $\bar n_\ell:=\frac{n_\ell-1}d$ nodes of degree~$\ell$,
    \item $\bar n_{\ell/d}:= \frac {n_{\ell/d}}d+1$ nodes of degree~$\ell/d$, one of which is marked.
    \end{itemize}
    Moreover, these trees are rooted
    \begin{itemize}
    \item  either at a  corner of degree~$\delta$, not incident to the marked node,
              \item  or possibly at a corner incident to the marked node if $\ell=\de$. 
    \end{itemize}
We will consider
the cases $\de=\ell/d$, $\de =\ell$ and $\de \not  \in \{\ell/d, \ell\}$ separately,
but then  happily observe that they  reduce to the same final counting formula.

    Assuming first  that $\de \not  \in \{\ell/d, \ell\}$, we derive from the next-to-last expression in~\eqref{eq:Tdelta(n,d)} that
    \beq\label{eq:de_generic}
      \Fix_{R_\de ^e}(\T_\de (\bn))=\bar n_{\ell/d}     \cdot |\T_\de(\bar n_1, \bar n_2, \ldots)|=
 \bar n_{\ell/d}     \cdot
      {\de \bar n_\de} \cdot \frac {(\frac f 2-1)!} {\bar n_1!\, \bar n_2 ! \cdots}.
    \eeq
       If $\de =\ell/d$,  we forbid  the marked node to be  the root node, even though it has degree~$\delta$. Thus the above formula is modified into
    \[
      \Fix_{R_\de ^e}(\T_\de (\bn))=
     \left( \bar n_{\de} -1\right)     \cdot |\T_\de(\bar n_1, \bar n_2, \ldots)|=
 \left( \bar n_{\de} -1\right)    \cdot
      {\de \bar n_\de} \cdot \frac {(\frac f 2-1)!} {\bar n_1!\, \bar n_2 ! \cdots}.
    \]
    Finally, if  $\de =\ell$,  we allow the marked node to be the root node, even though its degree is $\ell/d=\de/d$ rather than $\de$.  That is, to the generic contribution~\eqref{eq:de_generic} corresponding to trees in which the root has degree~$\de$,  we have to add the number of trees of $\T(\bar n_1, \bar n_2, \ldots)$ rooted at a corner of degree~$\delta/d$. It then follows  from the next-to-last expression in~\eqref{eq:Tdelta(n,d)} that
    \[
      \Fix_{R_\de ^e}(\T_\de (\bn))=
 \bar n_{\ell/d}     \cdot
 {\de \bar n_\de} \cdot \frac {(\frac f 2-1)!} {\bar n_1!\, \bar n_2 ! \cdots}
 +\frac \de d\  \bar n_{\de/ d}  \cdot \frac {(\frac f 2-1)!} {\bar n_1!\, \bar n_2 ! \cdots}.
    \]

In all three cases, the formula that we obtain coincides with
the expression in the third alternative 
of~\eqref{eq:16}. This proves \eqref{eq:15} also in this case.

\medskip
For all other values of $\bn$ and $e$,  there are no
trees fixed by $R_\de^e$
in~$\T_\de (\bn)$.
This completes the proof of the theorem.
\end{proof}

\begin{lemma} \label{lem:11}
Let $\de\ge 1$, and   let $\bn=(n_1,n_2,\dots )$
  be a sequence of non-negative integers satisfying~\eqref{tree-cond}. Write $n+1=\sum_i n_i$. Then the expression
\[
\frac {[\de n_\de]_q} {[n_\de]_q\,[n]_q}
\begin{bmatrix}
  n\\n_1,\dots,n_{\de-1},n_\de-1,n_{\de+1},\dots
\end{bmatrix}_q
\]
is a polynomial in $q$ with non-negative coefficients.
\end{lemma}

\begin{proof}
We write
\begin{equation} \label{eq:1+q^{n-1}5} 
\frac {[\de n_\de]_q} {[n_\de]_q\,[n]_q}
\begin{bmatrix} n\\n_1,\dots,n_{\de-1},n_\de-1,n_{\de+1},\dots
\end{bmatrix}_q
=\prod _{d\ge2} \Phi_d(q)^{j_d},
\end{equation}
using again the notation $\Phi_d(q)$ for the $d$-th cyclotomic polynomial in
$q$,
where
\begin{equation} \label{eq:j_d} 
j_d=\chi\big(d\mid \de n_\de\big)-\chi\big(d\mid n_\de\big)
+  \fl{\frac {n-1} {d}}-\fl{\frac {n_\de-1} {d}}
-\underset {i\ne \de}{\sum_{i \ge 1}
}\fl{\frac {n_i} {d}}.
\end{equation}

Recall that $n+1=\sum_i n_i$. Fix
$d\ge2$.
We introduce the fractional parts
$N_i=\{n_i/d\}$ for $i\ne \de$ and
$N_\de=\{(n_\de-1)/d\}$.
Using this notation,
we rewrite Equation~\eqref{eq:j_d} as
\beq
j_d
=\chi\big(d\mid \de n_\de\big)-\chi\big(d\mid n_\de\big)
+\fl{
  {\, \sum_{i\ge1}}
  N_i-\frac {1} {d}}.
\label{eq:j_d2}
\eeq
This is non-negative except possibly if $\sum_{i\ge1}
N_i=0$.
By definition of the $N_i$'s, the last condition is
equivalent to all $N_i$'s being equal to zero, which
in its turn is equivalent to all $n_i$'s being
divisible by~$d$ except for $n_\de$, which
satisfies the congruence
$n_\de\equiv1$~(mod~$d$). It then follows from~\eqref{tree-cond} that
  $d$ divides~$\delta$.
Hence the right-hand side
of~\eqref{eq:j_d2} reduces to $1-0+(-1)=0$. Thus, in all cases the conclusion
is that $j_d$ is non-negative.
This proves the polynomiality part of the lemma.

\medskip
For proving non-negativity of the coefficients, 
we start again by recalling that
$q$-multinomial 
coefficients are reciprocal and unimodal polynomials, and hence
\[
  \begin{bmatrix} n\\n_1,\dots,n_{\de-1},n_\de-1,n_{\de+1},\dots
  \end{bmatrix}_q
\]
is a reciprocal and unimodal polynomial in~$q$. It is of degree~$E$,
say, where an explicit formula for~$E$ could be given, which however
is irrelevant. 
It follows that the coefficient of~$q^i$ in
\[
(1-q)
\begin{bmatrix} n\\n_1,\dots,n_{\de-1},n_\de-1,n_{\de+1},\dots
\end{bmatrix}_q
\]
is non-negative for $0\le i\le\cl{E/2}$.
Therefore the same must be true for the series
\beq\label{series_de}
\frac {[\de n_\de]_q} {[n_\de]_q}\frac {1} {1-q^{n}}
\times(1-q)
\begin{bmatrix} n\\n_1,\dots,n_{\de-1},n_\de-1,n_{\de+1},\dots
\end{bmatrix}_q,
\eeq
because $[\de n_\de]_q/ {[n_\de]_q}=1+q^{n_\de}+ \cdots+ q^{(\de-1)n_\de}$ is a polynomial with non-negative coefficients.
However, we know from above that the last expression is a polynomial.
The degree of this polynomial is $E-n+1+n_\de(\de-1)\le E$, because $(\de-1) n_\de \le n-1$. (Indeed, this is obvious if $\de=1$, and if $\de\ge 2$, it follows from~\eqref{tree-cond} that $(\de-2)n_\de \le -2+n_1 \le -2+(n+1-n_\de)$, which gives the desired result.)
We know that at least the first half 
of the coefficients of~\eqref{series_de} are non-negative. 
It is obviously a reciprocal polynomial.
Hence the
other coefficients of the polynomial must be non-negative as well.

\medskip
This completes the proof of the lemma.
\end{proof}

\begin{lemma} \label{lem:12}
Let $\de\ge 1$, and   let $\bn=(n_1,n_2,\dots )$
  be a sequence of non-negative integers satisfying~\eqref{tree-cond}. Write $n+1=\sum_i n_i$.
Furthermore,
let $\om$ be a primitive $\de n_\de$-th root of unity, and let $e$ be a
non-negative integer $<\de n_\de$ being equal to~$0$ or
dividing~$\de n_\de$.
In the latter case, write $d:=\de n_\de/e$, so that $d \ge 2$.
Then we have
\begin{multline} \label{eq:16} 
\frac {[\de n_\de]_q} {[n_\de]_q\,[n]_q}
\begin{bmatrix}
  n\\n_1,\dots,n_{\de-1},n_\de-1,n_{\de+1},\dots
\end{bmatrix}_q
\Bigg\vert_{q=\om^e}\\
=
\begin{cases}
\frac {\de} {n}
\left(\begin{smallmatrix}
    n\\n_1,\dots,n_{\de-1},n_\de-1,n_{\de+1},\dots
  \end{smallmatrix}\right),
&\kern-3.2pt
\text{if }e=0,\\[.5em]
\de\cdot
\left(\begin{smallmatrix}
  \frac {n+1} {2}-1\\
 \frac {n_1}2,\dots,\frac {n_{\de-1}} {2},
 \frac {n_\de} {2}-1,\frac {n_{\de+1}} {2},\dots
\end{smallmatrix}\right),
&\kern-3.2pt
\text{if }
d=2 \text{ and each $n_i$ is even},\\[.5em]
\frac {\de n_\de}{n} \cdot \left(\begin{smallmatrix}
{\frac {n} {d}}\\
{\frac {n_1} {d},\dots,
  \frac {n_{\ell-1}} {d},
    \frac {n_\ell-1} {d},\frac {n_{\ell+1}} {d},
    \dots
  }
    \end{smallmatrix}\right),
&\kern-3.2pt
\text{if } d\mid (n_\ell-1) 
\text{ for some }\ell
\\[-.5em]
&\kern-3.2pt
\text{while } d\mid n_i \text{ for $i\ne \ell$},\\[.5em]
0,&\kern-3.2pt
\text{otherwise.}
\end{cases}
\end{multline}
\end{lemma}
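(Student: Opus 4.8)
The plan is to follow the same three-step template used for the earlier evaluation lemmas (Lemmas~\ref{lem:1}, \ref{lem:8}, and especially~\ref{lem:9}): dispose of $e=0$ by specialising at $q=1$, determine for which divisors $d$ the left-hand side of~\eqref{eq:16} fails to vanish, and finally compute the surviving values by means of the elementary fact~\eqref{eq:SM}. For $e=0$ the left-hand side is just the cardinality~\eqref{eq:Tdelta(n,d)}, which gives the first case. So I assume $0<e<\de n_\de$ divides $\de n_\de$, set $d=\de n_\de/e\ge 2$, and recall from~\eqref{eq:1+q^{n-1}5} that $\om^e$, a primitive $d$-th root of unity, is a root of the cyclic sieving polynomial with multiplicity exactly $i_d$. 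Hence the specialisation is non-zero precisely when $i_d=0$, and the whole problem reduces to revisiting the proof of Lemma~\ref{lem:11} (where $i_d\ge 0$ was shown) in order to decide when equality holds.

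Since $d\mid\de n_\de$ by assumption, formula~\eqref{eq:i_d2} simplifies to $i_d=1-\chi(d\mid n_\de)+\fl{\sum_{i\ge1}N_i-\frac1d}$, with $N_i=\{n_i/d\}$ for $i\ne\de$ and $N_\de=\{(n_\de-1)/d\}$. I would split according to the sign of $\sum_i N_i$. If $\sum_i N_i=0$, then $n_\de\equiv1$ and $n_i\equiv0\pmod d$ for $i\ne\de$, so $\chi(d\mid n_\de)=0$ and the floor equals $-1$, whence $i_d=0$; reducing~\eqref{tree-cond} modulo~$d$ forces $d\mid\de$, and this is the third case of~\eqref{eq:16} with $\ell=\de$. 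If instead $\sum_i N_i>0$, the floor term is non-negative, so $i_d=0$ requires both $d\mid n_\de$ and $\fl{\sum_i N_i-\frac1d}=0$; since then $N_\de=1-\frac1d$, the sum $\sum_i N_i$ must equal either $1-\frac1d$ or $1$. The value $1-\frac1d$ makes every $n_i$ divisible by $d$, which by~\eqref{tree-cond} forces $d=2$ (the second case, ``each $n_i$ even''), while the value $1$ singles out a unique index $\ell\ne\de$ with $n_\ell\equiv1\pmod d$ and all other $n_i\equiv0$ (the third case with $\ell\ne\de$). In every surviving instance~\eqref{tree-cond} also yields $d\mid\ell$, and these are exactly the non-vanishing configurations; all remaining pairs $(\bn,e)$ give $i_d>0$, which proves the fourth case.

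It remains to evaluate the surviving specialisations. As in Lemmas~\ref{lem:1} and~\ref{lem:9}, I would write the left-hand side of~\eqref{eq:16} as a product of ratios $[\al]/[\be]$ of $q$-integers with $\al\equiv\be\pmod d$ and apply~\eqref{eq:SM} to each factor; the natural starting point is
\[
\frac{[\de n_\de]}{[n_\de]\,[n]}\cdot\frac{[n]!}{[n_1]!\cdots[n_\de-1]!\cdots},
\]
where $[m]$ abbreviates $[m]_q$. The grouping must be adapted to the case at hand: when $d=2$ (second case, where $n$ is odd and $d\nmid n$) one pairs $[\de n_\de]$ with $[n_\de]$ to extract the prefactor $\de$, whereas in the third case, where $d\mid n$, one instead pairs $[\de n_\de]$ with $[n]$ to extract $\de n_\de/n$. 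In each case the remaining elementary ratios reassemble, via~\eqref{eq:SM}, into the multinomial coefficients displayed on the right-hand side of~\eqref{eq:16}.

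The main obstacle is the bookkeeping of the third case rather than any single computation. The distinguished index $\ell$ with $d\mid n_\ell-1$ may or may not coincide with $\de$, and the two possibilities behave differently under the evaluation: when $\ell\ne\de$ one has $d\mid n_\de$, so $[n_\de]$ itself vanishes and must be cancelled against a matching factor in the numerator, whereas when $\ell=\de$ it is the factor $[\de n_\de]$ that naively vanishes while $[n_\de]$ survives. I expect the delicate point to be checking that both sub-cases, after the appropriate cancellation, collapse to the single formula $\frac{\de n_\de}{n}\binom{n/d}{\dots}$ of~\eqref{eq:16}---this is the computational counterpart of the ``happy observation'' in the proof of Theorem~\ref{thm:delta} that the configurations $\de\notin\{\ell/d,\ell\}$, $\de=\ell/d$ and $\de=\ell$ all reduce to the same counting formula. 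Confirming that~\eqref{tree-cond} indeed forces the stated divisibilities ($d\mid\de$ when $\ell=\de$, and $d\mid\ell$ in general) and that no further configurations survive then completes the proof.
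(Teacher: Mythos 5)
Your proposal is correct and follows essentially the same route as the paper's proof: reduce to the multiplicity $i_d$ from Lemma~\ref{lem:11} using $d\mid\de n_\de$, split on $\sum_i N_i=0$ versus $\sum_i N_i>0$ to isolate exactly the three non-vanishing configurations (third case with $\ell=\de$, second case, third case with $\ell\ne\de$), and then evaluate via~\eqref{eq:SM}. The only difference is that you spell out the grouping of $q$-integer ratios in the final evaluation, which the paper leaves to the reader.
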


Note that
in the third case we have $d \mid n$, so that if $d=2$ then $n$ is even; the second and third
case are thus disjoint because $n$ is odd in the second case.

\begin{proof}[Proof of Lemma \ref{lem:12}]
The case where $e=0$ is trivial. Therefore we assume from now on that $e$
is a positive divisor of~$\de n_\de$ with $e<\de n_\de$. Then $\om^e$ is a primitive $d$-th root of unity. According to~\eqref{eq:1+q^{n-1}5}, in order to obtain a non-zero value of the specialisation on the left-hand side of~\eqref{eq:16}, the number $j_d$ given by~\eqref{eq:j_d} or~\eqref{eq:j_d2} must be zero. Note that, by
assumption, we have $d\mid \de n_\de$, so that, with the notation $N_i$ used in the proof of Lemma~\ref{lem:11},
\begin{equation} \label{eq:deldel} 
j_d=1-\chi\big(d \mid  n_\de) 
+\fl{\sum_{i\ge1}N_i-\frac {1} {d}}.
\end{equation}

  As already discussed in the proof of Lemma~\ref{lem:11}, the third summand on the right-hand side is negative if and only if $\sum N_i=0$, and in this case it equals~$-1$. We have seen that $j_d=0$ in this case, that $d$ divides~$n_\de-1$ and all $n_i$'s with $i\neq d$.
  We are then in the third case of~\eqref{eq:16} with $\ell=\de$. The left-hand side of~\eqref{eq:16} is then  evaluated using the simple fact~\eqref{eq:SM}.

  If, on the other hand $\sum N_i >0$, then the third summand in~\eqref{eq:deldel} is non-negative. The equation $j_d=0$ holds if and only if $d\mid n_\de$ (which means that $N_\de=1-\frac 1d$) and $\sum N_i \le 1$. Given the value of $N_\de$, this means that $\sum_{i\neq \de} N_i \le 1/d$, or equivalently $\sum_{i\neq \de} N_i \in \{0, 1/d\}$.
  \begin{itemize}
  \item   If  $\sum_{i\neq \de} N_i=0$, then each $n_i$  is divisible by $d$ (including $n_\de$, as we have seen above). Then~\eqref{tree-cond}
    forces $d=2$, and we are in the second case of~\eqref{eq:16}. The left-hand side of~\eqref{eq:16} is then  evaluated using the simple fact~\eqref{eq:SM}.
  \item  If   $\sum_{i\neq \de} N_i=1/d$, then there exists $\ell\neq \de$ such that $N_\ell=1/d$ (that is,\break
$d \mid (n_\ell-1)$),
 while $d\mid N_i$ for $i\neq \ell$ (including $i=\de$). We are then in the third case of~\eqref{eq:16}, with $\ell\neq \de$. The left-hand side of~\eqref{eq:16} is then  evaluated using the simple fact~\eqref{eq:SM}.
    \end{itemize}
\end{proof}

\section{Trees with given vertex degrees under internal rotation}
\label{sec:int_deg}

In this section we fix a sequence $\bn=(n_1,n_2,\dots)$  of non-negative
integers satisfying~\eqref{tree-cond}, and consider the set $\T_i(\bn)$ of rooted plane trees with degree distribution~$\bn$ whose root corner is incident to a non-leaf. The cardinality of this set is given by~\eqref{eq:Ti(n,d)}, where $n+1=\sum_i n_i$ is the total number of nodes.
The internal rotation~$R_i$ acts on these trees by moving the root corner to the next corner not incident to a leaf, in counterclockwise order.

\begin{theorem} \label{thm:int_deg}
  Let $\bn=(n_1,n_2,\dots)$
  be a sequence of non-negative integers satisfying~\eqref{tree-cond}. Write $n+1=\sum_i n_i$. The cyclic group of order~$2n-n_1$ acts on~$\T_i(\bn)$ by the internal rotation~$R_i$, and 
the triple
\[
  \left(\T_i(\bn),\langle R_i\rangle,\frac {[2n-n_1]_q}
{[n+1]_q\,[n]_q}
\begin{bmatrix}
  n+1\\n_1,n_2,\dots
\end{bmatrix}_q
\right)
\]
exhibits the cyclic sieving phenomenon.
\end{theorem}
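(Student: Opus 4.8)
The plan is to follow the uniform three-part scheme of Sections~\ref{sec:ordinary}--\ref{sec:delta}. I would relegate polynomiality and non-negativity of the cyclic sieving polynomial to a first lemma (the analogue of Lemma~\ref{lem:3P}) and the evaluations at roots of unity to a second lemma (the analogue of Lemma~\ref{lem:3}), so that the proof of the theorem itself reduces to enumerating the trees of $\T_i(\bn)$ fixed by a power of $R_i$. For $e=0$ all trees are fixed, and their number $|\T_i(\bn)|$ is given by~\eqref{eq:Ti(n,d)}, which is precisely the value at $q=1$ of the proposed polynomial. By Lemma~\ref{lem:divisors} it then suffices to treat the integers $e\in\llbracket 1,2n-n_1-1\rrbracket$ that divide $2n-n_1$.

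For such an $e$ I would invoke the second statement of Proposition~\ref{prop:RRR}, applied with $k=n_1$ (legitimate, since every tree of $\T_i(\bn)$ has exactly $n_1$ leaves), to replace the condition $R_i^e(\tau)=\tau$ by $R^f(\tau)=\tau$ with $f=2n/d$ and $d=(2n-n_1)/e$. That proposition restricts the admissible $e$ to two regimes: the half-turn regime $d=2$, $f=n$ (with $n_1$ and $n+1$ even), and the regime $d\mid n$, $f$ even, in which moreover $d\mid n_1$ since $d\mid 2n-n_1$. In each regime Proposition~\ref{prop:structure} characterises the $R^f$-fixed trees; I would apply its bijections while tracking the full degree distribution, combining the internal-rotation analysis of Theorem~\ref{thm:int} with the degree bookkeeping of Theorems~\ref{thm:ord_deg} and~\ref{thm:delta}.

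In the half-turn regime the bijection $\Phi$ cuts the central edge, and tracking degrees sends the $R^n$-fixed trees of $\T_i(\bn)$ to the trees of $\T_i(\bar\bn)$ carrying a marked (necessarily non-root) leaf, where $\bar n_1=\tfrac{n_1}2+1$ and $\bar n_i=\tfrac{n_i}2$ for $i\ge2$, on $\tfrac{n+1}2$ edges; this forces every $n_i$ to be even. Multiplying $|\T_i(\bar\bn)|$ from~\eqref{eq:Ti(n,d)} by the number $\tfrac{n_1}2+1$ of marked-leaf choices, that factor cancels and the count collapses to
\[
\frac{2n-n_1}{n+1}\binom{\frac{n+1}2}{\frac{n_1}2,\frac{n_2}2,\dots},
\]
which I expect to constitute the second case of the evaluation lemma; matching it to the limit of the $q$-expression is routine via~\eqref{eq:SM}, grouping $q$-integers congruent modulo $2$ as in~\eqref{grouped-2}.

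The main obstacle is the regime $d\mid n$, handled by $\Psi_d$. Here an $R^f$-fixed tree has a central node $c$ of degree $\ell$ with $d\mid\ell$, and $\Psi_d$ retains one angular sector; tracking degrees gives an image on $\tfrac nd$ edges with $\bar n_i=\tfrac{n_i}d$ for $i\notin\{\ell/d,\ell\}$, $\bar n_\ell=\tfrac{n_\ell-1}d$, $\bar n_{\ell/d}=\tfrac{n_{\ell/d}}d+1$, a marked node $c$ of degree $\ell/d$, and a root constrained to sit either at $c$ or at a non-leaf distinct from $c$. This forces $d\mid n_i$ for $i\ne\ell$ and $d\mid n_\ell-1$, pinning down a unique $\ell$. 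Mirroring the proof of Theorem~\ref{thm:int}, I would split according to whether the marked centre survives as a non-leaf ($\ell>d$) or degenerates to a leaf ($\ell=d$), and whether the root sits at $c$, obtaining the three sub-types (marked-leaf root; non-leaf root with a marked leaf; non-leaf root with a marked internal node), whose counts follow from~\eqref{eq:Tl(n,d)} and~\eqref{eq:Ti(n,d)}. The delicate point --- exactly as the sub-cases $\de\in\{\ell/d,\ell\}$ versus $\de\notin\{\ell/d,\ell\}$ collapse in the proof of Theorem~\ref{thm:delta} --- is to verify that these contributions sum to the single value
\[
\frac{2n-n_1}{n}\binom{\frac nd}{\frac{n_1}d,\dots,\frac{n_{\ell-1}}d,\frac{n_\ell-1}d,\frac{n_{\ell+1}}d,\dots},
\]
consistently with $\sum_{\de\ge2}\de n_\de=2n-n_1$, which is what ties internal rotation to the $\de$-rotations of Theorem~\ref{thm:delta}; after this collapse the $q\to\om^e$ evaluation via~\eqref{eq:SM} (grouping modulo $d$ as in~\eqref{grouped-d}) is routine. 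Finally, for polynomiality I would use $\begin{bmatrix}n+1\\n_1,n_2,\dots\end{bmatrix}_q=\tfrac{[n+1]_q}{[n_1]_q}\begin{bmatrix}n\\n_1-1,n_2,\dots\end{bmatrix}_q$ together with the decomposition
\begin{align*}
\frac{[2n-n_1]_q}{[n]_q[n_1]_q}\begin{bmatrix}n\\n_1-1,n_2,\dots\end{bmatrix}_q
&=\frac{1}{[n]_q}\begin{bmatrix}n\\n_1-1,n_2,\dots\end{bmatrix}_q\\
&\quad+q^{n_1}\frac{(1+q^{n-n_1})\,[n-n_1]_q}{[n]_q[n_1]_q}\begin{bmatrix}n\\n_1-1,n_2,\dots\end{bmatrix}_q,
\end{align*}
whose first summand is the leaf-rooted polynomial (non-negative by Theorem~\ref{thm:delta} with $\de=1$), and whose second summand, carrying the same $1+q^{n-n_1}$ factor as~\eqref{eq:1+q^{n-1}}, is amenable to the cyclotomic-exponent count and the reciprocal--unimodal argument of Lemma~\ref{lem:3P}, now with $q$-multinomials in place of products of two $q$-binomials.
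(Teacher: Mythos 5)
Your proposal is correct and follows essentially the same route as the paper's proof: reduction via Lemma~\ref{lem:divisors} and Proposition~\ref{prop:RRR} to counting $R^f$-fixed trees, the $\Phi$/$\Psi_d$ bijections with degree bookkeeping split into the same sub-types (including the special treatment of $\ell=d$), and a two-term decomposition of the cyclic sieving polynomial whose first summand is the $\de=1$ case of Lemma~\ref{lem:11} and whose second summand is algebraically identical to the one in~\eqref{eq:decomp2}, handled by the same cyclotomic-exponent and reciprocal--unimodal arguments. No substantive differences to report.
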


\begin{example}
For $\bn=(3,0, 1, 0, \ldots)$ and $\bn=(2, 2, 0, \ldots)$, corresponding to the trees of Figure~\ref{fig:example}, we obtain the same polynomials as those given by Theorem~\ref{thm:int}, namely $1$ and $1+q^2$, respectively.
\end{example}

\begin{proof}[Proof of Theorem \ref{thm:int_deg}]
Polynomiality and non-negativity of coefficients of the cyclic
sieving polynomial are proved in Lemma~\ref{lem:6} below.

\medskip
Let $\om$ be a primitive $(2n-n_1)$-th root of unity.
We have to prove 
\begin{equation} \label{eq:9}
\Fix_{R_i^e}(\T_i(\bn))=
\frac {[2n-n_1]_q} {[n+1]_q\,[n]_q}
\begin{bmatrix} n+1\\n_1,n_2,\dots
\end{bmatrix}_q
\Bigg\vert_{q=\om^ e}
\end{equation}
for all integers  $e$.
It suffices to prove this for values $e\in \llbracket 0, 2n-1-n_1\rrbracket$ equal to $0$ or dividing $2n-n_1$.
Lemma~\ref{lem:7} below gives the corresponding values of the right-hand side of~\eqref{eq:9}.
 
\medskip
When $e=0$, all trees in $\T_i(\bn)$ are fixed by 
$R_i^0=\text{id}$, and their number is given by~\eqref{eq:Ti(n,d)}. This coincides with the value at $q=1$ of the right-hand side of~\eqref{eq:9}.

\medskip
We now assume that $0<e<2n-n_1$, with $e$ a divisor of $2n-n_1$. We revisit the proof of Theorem~\ref{thm:int}
while keeping track of all vertex degrees. In particular, we use Proposition~\ref{prop:RRR} to relate the rotations~$R_i$ and~$R$. Recall that the only possible values of~$e$
are $e=n-\frac {n_1}2$, with $n$ odd (corresponding to $f=n$ in the notation of Proposition~\ref{prop:RRR}), and those for which
$(2n-n_1)/e$ divides~$n$ (corresponding to even values of~$f$).

If $e=n-\frac {n_1}2$, with $n+1$ and $n_1$ even, Proposition~\ref{prop:RRR} tells us that we simply have to
count trees of $\T_i(\bn)$ fixed by~$R^{n}$. By Proposition~\ref{prop:structure}, the map $\Phi$ sends them bijectively to trees of $\T_i(1+ \frac{ n_1} 2, \frac {n_2} 2, \frac{n_3} 2, \ldots)$ having a marked leaf. Such trees have $(n+1)/2$ edges. Hence, using~\eqref{eq:Ti(n,d)}, we conclude that
\[
  \Fix_{R_i^e}(\T_i(\bn))=\left (1+ \tfrac{ n_1} 2\right) 
  \cdot\frac {2 \frac{n+1}2 -1-\frac{n_1}2}
  {\frac {n+1} 2 (1+\frac {n+1} 2)}
  \binom{1+\frac {n+1} 2}{1+ \frac{ n_1} 2, \frac {n_2} 2, \ldots}.
\]
This expression is equivalent to the expression in the second alternative 
of~\eqref{eq:10}. This proves \eqref{eq:9} in this case.

Now, if $d:=(2n-n_1)/e$ divides~$n$, let us
write $f:=2n/d$. By Proposition~\ref{prop:RRR}, we need to count trees of $\T_i(\bn)$ fixed by~$R^f$. As in the proof of Theorem~\ref{thm:ord_deg}, these trees are centred at a node of degree~$\ell$ (divisible by~$d$), where $n_\ell-1$ is divisible by~$d$, while for $i\neq \ell$ the number~$n_i$ is divisible by $d$. This constrains the degree distribution~$\bn$ as in the third case of Lemma~\ref{lem:7} below. By Proposition~\ref{prop:RRR}, the map $\Psi_d$ sends
trees of $\T_i(\bn)$ fixed by~$R^f$ bijectively 
to trees with a total of $\frac f 2 +1$ nodes, among which
\begin{itemize}
\item  $\bar n_i:=\frac {n_i} d$ nodes of degree~$i$ for $i\not \in \{\ell/d, \ell\}$,
\item $\bar n_\ell:=\frac{n_\ell-1}d$ nodes of degree~$\ell$,
    \item $\bar n_{\ell/d}:= \frac {n_{\ell/d}}d+1$ nodes of degree~$\ell/d$, one of which is marked.
    \end{itemize}
    Moreover, these trees are rooted
    \begin{itemize}
    \item either at a corner incident to a non-leaf,
      \item or possibly at the marked node when it is a leaf, that is, when $\ell=d$.
      \end{itemize}

      Assuming first that $\ell\neq d$, we conclude from~\eqref{eq:Ti(n,d)} that
      \[
        \Fix_{R_i^e}(\T_i(\bn))=\bar n_{\ell/d} \cdot
        \frac{f-\bar n_1}{\frac f 2 (1+\frac f 2)} \binom{1+\frac f 2}{\bar n_1, \bar n_2, \ldots}.
      \]
      If $\ell=d$, we must add to the above term the number of leaf-rooted trees having degree distribution $(\bar n_1, \bar n_2, \ldots)$. Using~\eqref{eq:Tl(n,d)}, we  conclude that  
      \begin{align*}
          \Fix_{R_i^e}(\T_i(\bn))&=
         \bar n_{1} \cdot
         \frac{f-\bar n_1}{\frac f 2 (1+\frac f 2)} \binom{1+\frac f 2}{\bar n_1, \bar n_2, \ldots}
         +\frac 1 {\frac f 2} \binom{\frac f 2}
         {\bar n_1 -1, \bar n_2, \ldots}
 \\       & = \frac {f-\bar n_1+1} {\frac f 2} \binom{\frac f 2}
         {\bar n_1 -1, \bar n_2, \ldots}.
      \end{align*}
We happily observe that, in both cases ($\ell\neq d$ or $\ell=d$), the expression that we obtain is equivalent to the expression in the third alternative of~\eqref{eq:10}. This proves \eqref{eq:9} also in this case.

\medskip
For all other values of $e$ and $\bn$, there are no trees fixed by $R_i^e$
in~$\T_i(\bn)$.
This completes the proof of the theorem.
\end{proof}

\begin{lemma} \label{lem:6}
Let  $\bn=(n_1,n_2,\dots)$ be a sequence of non-negative integers
satisfying~\eqref{tree-cond}. Write $n+1=\sum_i n_i$. Then the expression
\begin{equation} \label{eq:Pol2} 
\frac {[2n-n_1]_q} {[n+1]_q\,[n]_q}
\begin{bmatrix}
  n+1\\n_1,n_2,\dots
\end{bmatrix}_q
\end{equation}
is a polynomial in $q$ with non-negative coefficients.
\end{lemma}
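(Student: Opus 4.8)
The plan is to follow the three-part template that governs every polynomiality lemma in the paper (Lemmas~\ref{lem:6P}, \ref{lem:3P}, \ref{lem:10}, \ref{lem:11}): first express the rational function as a product of cyclotomic polynomials and check that every exponent is non-negative (this gives polynomiality), then observe that this same factorization forces reciprocity, and finally upgrade polynomiality to coefficient non-negativity via the reciprocal-and-unimodal machinery. The starting observation is the simplification
\[
\frac {[2n-n_1]_q} {[n+1]_q\,[n]_q}\begin{bmatrix} n+1\\n_1,n_2,\dots\end{bmatrix}_q=\frac {[2n-n_1]_q\,[n-1]_q!} {\prod_{i\ge1}[n_i]_q!},
\]
obtained by cancelling $[n+1]_q$ and $[n]_q$ against $[n+1]_q!$.

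For polynomiality I would write the right-hand side as $\prod_{d\ge2}\Phi_d(q)^{j_d}$ and, setting $N_i:=\{n_i/d\}$ as in the proofs of Lemmas~\ref{lem:10} and~\ref{lem:11}, reduce the exponent to the clean form
\[
j_d=\chi\big(d\mid(2n-n_1)\big)+\fl{\,\textstyle\sum_{i\ge1}N_i-\frac2d\,},
\]
using $\sum_i\fl{n_i/d}=\frac{n+1}{d}-\sum_iN_i$ and $\fl{(n-1)/d}=\sum_i\fl{n_i/d}+\fl{\sum_iN_i-\frac2d}$. Since each $N_i$ is a non-negative multiple of $1/d$, the floor term is $\ge-1$, and it equals $-1$ only when $\sum_iN_i\in\{0,1/d\}$. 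The key point, exactly as in Lemma~\ref{lem:10}, is that~\eqref{tree-cond} rescues the sign in precisely these cases: if $\sum_iN_i=0$ then $d\mid n_i$ for all $i$, whence $d\mid\sum_i(i-2)n_i=-2$ forces $d=2$, and then $2n-n_1$ is even; if $\sum_iN_i=1/d$ then some $n_\ell\equiv1\pmod d$ while $d\mid n_i$ otherwise, and reducing~\eqref{tree-cond} modulo $d$ gives $\ell\equiv0$, so that $2n=\sum_i i\,n_i\equiv\ell\equiv0$ and $n_1\equiv0$, again yielding $d\mid(2n-n_1)$. In both cases $\chi(d\mid(2n-n_1))=1$, so $j_d\ge0$; hence the expression is a polynomial, and being a product of the reciprocal polynomials $\Phi_d$ ($d\ge2$) it is automatically reciprocal.

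For the coefficients I would imitate Lemma~\ref{lem:3P}, decomposing the target into a leaf-rooted piece plus a correction. Using $[2n-n_1]_q=[n_1]_q+q^{n_1}[2n-2n_1]_q$ together with $\frac{[n_1]_q}{[n+1]_q}\begin{bmatrix} n+1\\n_1,n_2,\dots\end{bmatrix}_q=\begin{bmatrix} n\\n_1-1,n_2,\dots\end{bmatrix}_q$, one gets
\[
\frac {[2n-n_1]_q} {[n+1]_q\,[n]_q}\begin{bmatrix} n+1\\n_1,n_2,\dots\end{bmatrix}_q=\frac1{[n]_q}\begin{bmatrix} n\\n_1-1,n_2,\dots\end{bmatrix}_q+q^{n_1}\,\frac{[2n-2n_1]_q}{[n+1]_q\,[n]_q}\begin{bmatrix} n+1\\n_1,n_2,\dots\end{bmatrix}_q.
\]
The first summand is the leaf-rooted ($\de=1$) cyclic sieving polynomial, already shown to have non-negative coefficients in Lemma~\ref{lem:11}. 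For the second summand one would factor $[2n-2n_1]_q=(1+q^{\,n-n_1})[n-n_1]_q$, exposing the same kind of $1+q^{\,n-n_1}$ numerator that made Lemma~\ref{lem:3P} work, and then attempt the reciprocal-and-unimodal argument: multiply a reciprocal unimodal $q$-multinomial by $(1-q)$ (first half non-negative), multiply by a non-negative power series, and close by reciprocity and a degree bound.

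The hard part will be exactly this correction term. Unlike all previous cases, the prefactor $\frac{[2n-2n_1]_q}{[n+1]_q[n]_q}$ carries \emph{two} genuine $q$-integer denominators $[n+1]_q[n]_q$, whereas the reciprocal-and-unimodal step needs a single one so that precisely one factor of $(1-q)$ pairs with the multinomial and the remaining rational factor is a non-negative power series. Pairing only one $(1-q)$ here leaves $\frac{1-q^{2n-2n_1}}{(1-q^n)(1-q^{n+1})}$, which one checks is \emph{not} a non-negative power series (e.g.\ for $\bn=(3,1,1,0,\dots)$, $n=4$, the coefficient of $q^{2}$ is $-1$), and pairing both $(1-q)$'s fails because $(1-q)^2$ times a reciprocal unimodal product need not be first-half non-negative. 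Resolving this is the crux: I would look for a finer decomposition into single-denominator pieces — most plausibly a three-term splitting mirroring the three tree-types (marked node of degree $\ell/d$, leaf-root, non-leaf-root) that already appear in the fixed-point enumeration in the proof of Theorem~\ref{thm:int_deg} — so that each piece again has the form $\frac{1+q^{\ast}}{[\,\cdot\,]_q}\times(\text{reciprocal unimodal})$ handled by Lemma~\ref{lem:6P}'s argument; alternatively, one could seek a direct combinatorial (major-index) model for~\eqref{eq:Pol2}, as was done for the external case in Lemma~\ref{lem:2P}.
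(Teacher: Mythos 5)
Your overall architecture matches the paper's actual proof of Lemma~\ref{lem:6}: the same splitting $[2n-n_1]_q=[n_1]_q+q^{n_1}[2n-2n_1]_q$, the same identification of the first summand with the $\de=1$ case of Lemma~\ref{lem:11}, and essentially the same cyclotomic-exponent computation (your derivation of $j_d$ and the use of~\eqref{tree-cond} to force $d\mid(2n-n_1)$ in the two critical cases is correct, and in fact runs slightly more directly than the paper's, which applies the cyclotomic analysis only to the correction factor). The problem is that you leave the coefficient non-negativity of the correction term unresolved, and this is a genuine gap, not just a presentational one: without it the lemma is not proved.

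The obstruction you describe --- ``two genuine $q$-integer denominators $[n+1]_q[n]_q$'' --- is illusory. Those two factors are the top two factors of $[n+1]_q!$ and cancel completely into the multinomial, so the correction term is $q^{n_1}[2n-2n_1]_q\,[n-1]_q!/\prod_{i\ge1}[n_i]_q!$. The missing idea is then a regrouping of the factorials: since $[2n-2n_1]_q=(1+q^{n-n_1})[n-n_1]_q$ and $\sum_{i\ge2}n_i=n+1-n_1$, one has
\begin{equation*}
q^{n_1}\,\frac{[2n-2n_1]_q\,[n-1]_q!}{\prod_{i\ge1}[n_i]_q!}
=q^{n_1}\,\frac{1+q^{n-n_1}}{[n+1-n_1]_q}
\begin{bmatrix} n-1\\n_1\end{bmatrix}_q
\begin{bmatrix} n+1-n_1\\n_2,n_3,\dots\end{bmatrix}_q,
\end{equation*}
which is exactly the single-denominator shape $\frac{1+q^{\ast}}{[\,\cdot\,]_q}\times(\text{reciprocal unimodal product})$ you were searching for. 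From here the paper runs the argument of Lemmas~\ref{lem:6P} and~\ref{lem:3P} verbatim: $(1-q)$ times the reciprocal unimodal product of the two coefficients above has non-negative coefficients in its first half, multiplying by the non-negative series $(1+q^{n-n_1})/(1-q^{n+1-n_1})$ preserves this, and polynomiality together with reciprocity and a degree bound finishes. Your computation showing that $(1-q^{2n-2n_1})/((1-q^{n})(1-q^{n+1}))$ is not a non-negative power series is accurate, but it becomes irrelevant once the cancellation is performed; neither of your proposed escape routes (a three-term splitting mirroring the fixed-point count, or a major-index model) is needed.
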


\begin{proof}
Taking inspiration from the decomposition~\eqref{eq:decomp1}, we
decompose~\eqref{eq:Pol2} as
\[
\frac {1} {[n]_q}
\begin{bmatrix}
  n\\n_1-1,n_2,\dots
\end{bmatrix}_q
+q^{n_1}
\frac {1+q^{n-n_1}} {[n+1-n_1]_q}\begin{bmatrix} n-1\\n_1\end{bmatrix}_q
\begin{bmatrix}
  n+1-n_1\\n_2,n_3,\dots
\end{bmatrix}_q.
\]
By Lemma~\ref{lem:11} with $\de=1$,
we know that the first term in this
expression is a polynomial in~$q$ with non-negative coefficients.

To see that the second term is a polynomial, we write 
\begin{equation} \label{eq:1+q^{n-1}2} 
\frac {1+q^{n-n_1}} {[n+1-n_1]_q}\begin{bmatrix} n-1\\n_1\end{bmatrix}_q
\begin{bmatrix}
  n+1-n_1\\n_2,n_3,\dots
\end{bmatrix}_q
= [2n-2n_1]_q\frac {[n-1]_q!}{\prod _{i\ge 1} [n_i]_q!}
=\prod _{d\ge 2} \Phi_d(q)^{k_d},
\end{equation}
using again the notation $\Phi_d(q)$ for the $d$-th cyclotomic polynomial in
$q$,
where
\begin{equation} \label{eq:k_d} 
k_d=\chi\big(d\mid (2n-2n_1)\big)
+  \fl{\frac {n-1} {d}}
-\sum_{i\ge 1}
\fl{\frac {n_i} {d}}.
\end{equation}

From now on, let $d\ge2$. We introduce the fractional parts
$N_i=\{n_i/d\}$ for $i\ge1$.
Using this notation,
we may rewrite Equation~\eqref{eq:k_d} as
\beq
k_d
=\chi\big(d\mid (2n-2n_1)\big)
+\fl{\sum_{i\ge1}N_i-\frac {2} {d}}.
\label{eq:k_d2}
\eeq
This is non-negative except possibly if $\sum_{i\ge1}N_i=0$ or
$\sum_{i\ge1}N_i=\frac {1} {d}$.
In that case the second term on the right-hand side is $-1$.

If $\sum_{i\ge1}N_i=0$, this is
equivalent to all $N_i$'s being equal to zero, which
in its turn is equivalent to all $n_i$'s being
divisible by~$d$. It then follows from~\eqref{tree-cond} that
$d=2$, so that $k_d=0$.

If $\sum_{i\ge1}N_i=\frac {1} {d}$, this is
equivalent to all $N_i$'s being equal to zero except for one that
equals~$\frac {1} {d}$, say $N_\ell=\frac {1} {d}$ for a specific~$\ell$.
In its turn, this is equivalent to all $n_i$'s being
divisible by~$d$ except for~$n_\ell$ which satisfies
$n_\ell\equiv1$~(mod~$d$). In particular, $d$~divides $n=-1+\sum n_i$.
Hence, if $\ell\ge2$, then $d$~divides $n-n_1$, so that the first term
on the right-hand side of~\eqref{eq:k_d2} equals~$+1$ which
guarantees non-negativity of the 
right-hand side.
However, we cannot have  $\ell=1$, that is, $n_1\equiv 1$~(mod~$d$), because~\eqref{tree-cond} would then give $-1\equiv -2$~(mod~$d$), a contradiction since $d\ge 2$.

This proves the polynomiality part of the lemma. 

\medskip
We will now show that the expression in \eqref{eq:1+q^{n-1}2} is in fact a
polynomial with non-negative coefficients. 
In order to do this,
we proceed in complete analogy to the corresponding part in the proof
of Lemma~\ref{lem:6P}. Namely, since it is well-known that
$q$-binomial and $q$-multinomial 
coefficients are reciprocal and unimodal polynomials, and since
the product of reciprocal and
unimodal polynomials is also reciprocal and unimodal, we conclude
that the product
\[
\begin{bmatrix} n-1\\n_1\end{bmatrix}_q
\begin{bmatrix}
  n+1-n_1\\n_2,n_3,\dots
\end{bmatrix}_q
\]
is a reciprocal and unimodal polynomial in~$q$. It is of degree~$F$,
say, where an explicit formula for~$F$ could be given, which however
is irrelevant.
Hence, the
coefficient of~$q^i$ in
\[
(1-q)\begin{bmatrix} n-1\\n_1\end{bmatrix}_q
\begin{bmatrix}
  n+1-n_1\\n_2,n_3,\dots
\end{bmatrix}_q
\]
is non-negative for $0\le i\le\cl{F/2}$.
Therefore the same must be true for the series
\[
\frac {1+q^{n-n_1}} {1-q^{n+1-n_1}}\times(1-q)\begin{bmatrix}
  n-1\\n_1\end{bmatrix}_q 
\begin{bmatrix}
  n+1-n_1\\n_2,n_3,\dots
\end{bmatrix}_q
=
\frac {1+q^{n-n_1}} {[n+1-n_1]_q}\begin{bmatrix} n-1\\n_1\end{bmatrix}_q
\begin{bmatrix}
  n+1-n_1\\n_2,n_3,\dots
\end{bmatrix}_q.
\]
However, we know from above that the last expression is a polynomial.
Since it is reciprocal and has degree~$F$, we infer that also the
other coefficients of this polynomial must be non-negative.

\medskip
This completes the proof of the lemma.
\end{proof}

\begin{lemma} \label{lem:7}
Let  $\bn=(n_1,n_2,\dots)$ be a sequence of non-negative integers
satisfying~\eqref{tree-cond}. Write $n+1=\sum_i n_i$. Furthermore,
let $\om$ be a primitive $(2n-n_1)$-th root of unity, and let $e$ be a
non-negative integer $<2n-n_1$ being equal to~$0$ or dividing~$2n-n_1$. 
In the latter case,
write $d= (2n-n_1)/e$, so that $d \ge 2$.
Then we have
\begin{multline} \label{eq:10} 
\frac {[2n-n_1]_q} {[n+1]_q\,[n]_q}
\begin{bmatrix}
  n+1\\n_1,n_2,\dots
\end{bmatrix}_q
\Bigg\vert_{q=\om^e}\\
=
\begin{cases}
\frac {2n-n_1} {n(n+1)}
\left(\begin{smallmatrix}
    n+1\\n_1,n_2,\dots
  \end{smallmatrix}\right),
&\kern-3.2pt
\text{if }e=0,\\[.5em]
\frac {2n-n_1} {n+1}
\left(\begin{smallmatrix}
    \frac {n+1} 2\\\frac{n_1}2,\frac{n_2}2,\dots
  \end{smallmatrix}\right),
&\kern-3.2pt
\text{if $d=2$ and each $n_i$ is even},
\\[.5em]
\frac {2n-n_1} {n}
\left(\begin{smallmatrix}
\frac {n} {d}\\{\frac {n_1}{d}},
\dots,\frac {n_{\ell-1}}{d},\frac {n_\ell-1}{d},\frac {n_{\ell+1}}{d},
  \dots
\end{smallmatrix}\right),
&\kern-3.2pt
\text{if }
d\mid (n_\ell-1)
\text{ for some }\ell,
\\[-.5em]
&\kern-3.2pt
\text{while  } d\mid n_i
\text{ for $i\ne \ell$},\\[.5em]
0,&\kern-3.2pt
\text{otherwise.}
\end{cases}
\end{multline}
\end{lemma}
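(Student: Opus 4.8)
The plan is to follow the same three-step template used in the evaluation Lemmas~\ref{lem:1}, \ref{lem:9}, and~\ref{lem:12}: dispose of $e=0$, locate the roots of unity at which the polynomial does not vanish (settling the fourth, vanishing case), and then evaluate it there by means of~\eqref{eq:SM}. The case $e=0$ is immediate, since at $q=1$ the expression reduces to $|\T_i(\bn)|$ as recorded in~\eqref{eq:Ti(n,d)}. So I assume $0<e<2n-n_1$ divides $2n-n_1$, and set $\rh:=\om^e$, a primitive $d$-th root of unity with $d=(2n-n_1)/e\ge2$.

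First I would determine the multiplicity $m_d$ of $\rh$ as a root of~\eqref{eq:Pol2}. Cancelling $[n+1]_q[n]_q$ against the $q$-multinomial (using $[n+1]_q!=[n+1]_q[n]_q[n-1]_q!$) rewrites~\eqref{eq:Pol2} as $[2n-n_1]_q\,[n-1]_q!\big/\prod_{i\ge1}[n_i]_q!$, so by~\eqref{qint0},
\[
m_d=\chi(d\mid 2n-n_1)+\fl{\frac{n-1}{d}}-\sum_{i\ge1}\fl{\frac{n_i}{d}}.
\]
Since $d\mid 2n-n_1$ the first term equals $1$. Introducing $N_i:=\{n_i/d\}$ and using $\sum_i n_i=n+1$ exactly as in the proof of Lemma~\ref{lem:6}, this collapses to $m_d=1+\fl{\sum_{i\ge1}N_i-\tfrac2d}$. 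The specialisation is non-zero iff $m_d=0$, i.e.\ iff $\sum_i N_i<2/d$; since each $N_i\in\{0,1/d,\dots,(d-1)/d\}$, this forces $\sum_i N_i\in\{0,1/d\}$. The value $\sum_i N_i=0$ means every $n_i$ is divisible by $d$, whence~\eqref{tree-cond} forces $d=2$ with all $n_i$ even (the second case). The value $\sum_i N_i=1/d$ means a single $N_\ell=1/d$, i.e.\ $d\mid n_\ell-1$ while $d\mid n_i$ for $i\ne\ell$ (the third case); here $\ell\ne1$, since $\ell=1$ combined with~\eqref{tree-cond} would give $-1\equiv-2\pmod d$, and moreover $d\mid n$. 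Every other $d$ yields $m_d\ge1$, establishing the fourth (vanishing) case.

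It remains to evaluate~\eqref{eq:Pol2} at $\rh$ in the second and third cases, using~\eqref{eq:SM} as in Lemma~\ref{lem:9}. For the third case I would imitate its third case verbatim: writing $[n+1]_q!=[n+1]_q[n]_q!$ and peeling off the factor $[n_\ell]_q$ from the denominator gives
\[
\frac{[2n-n_1]_q}{[n]_q}\cdot\frac{1}{[n_\ell]_q}\cdot\frac{[n]_q[n-1]_q\cdots[1]_q}{([n_\ell-1]_q\cdots[1]_q)\prod_{i\ne\ell}([n_i]_q\cdots[1]_q)},
\]
in which the first factor tends to $(2n-n_1)/n$ (both indices being divisible by $d$), the middle factor tends to $1$ (as $n_\ell\equiv1\pmod d$), and the last, now balanced, factor yields $\binom{n/d}{\dots,(n_\ell-1)/d,\dots}$ by~\eqref{eq:SM} applied termwise. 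For the second case ($d=2$, $\rh=-1$) the factor $[2n-n_1]_q$ is instead congruent mod $2$ to the even factors of the denominator, and I would group as in~\eqref{grouped-2}, which produces $\tfrac{2n-n_1}{n+1}\binom{(n+1)/2}{n_1/2,n_2/2,\dots}$. I expect the only genuinely delicate point throughout to be this bookkeeping—choosing which factor to peel off so that the surviving ratio is balanced and its indices run through matching residues mod $d$—but it is routine and follows the pattern already set in Lemmas~\ref{lem:1}, \ref{lem:9}, and~\ref{lem:12}.
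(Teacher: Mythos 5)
Your proposal is correct and follows essentially the same route as the paper's proof: the multiplicity computation reduces to $1+\lfloor\sum_i N_i-\tfrac2d\rfloor=0$, forcing $\sum_i N_i\in\{0,1/d\}$, the two non-vanishing cases are identified via~\eqref{tree-cond} exactly as in the paper (which notes $d\mid\ell$, equivalent to your direct $-1\equiv-2$ contradiction for $\ell=1$), and the evaluations via~\eqref{eq:SM} use the same peeling/grouping as in Lemma~\ref{lem:9}. Your write-up is in fact somewhat more detailed than the paper's, which leaves the final evaluations to the reader.
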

Note that
in the third case we have $d \mid n$, so that if $d=2$ then $n$ is even; the second and third
case are thus disjoint because $n$ is odd in the second case.

\begin{proof}[Proof of Lemma \ref{lem:7}]
The case where $e=0$ is trivial. Therefore we assume from now on that $e$
is a positive divisor of~$2n-n_1$ with $e<2n-n_1$. Then $\om^e$ is a primitive $d$-th root of unity.
According to~\eqref{qint0}, in order to obtain something non-zero upon the specialisation
$q=\om^e$ on the left-hand side of~\eqref{eq:10},
there must be an equal number of factors of the form
$1-q^\al$ with $d\mid \al$
in the numerator and in the
denominator. In other words, we must have
\begin{equation} \label{eq:2n-2-d1/e} 
1+\fl{\frac {n-1} {d}}
-\sum_{{i\ge 1}}
\fl{\frac {n_i} {d}}=0.
\end{equation}
As in the previous proof,
we introduce the fractional parts $N_i=\{\frac {n_i}{d}\}$ for $i\ge1$.
With this notation,
Equation~\eqref{eq:2n-2-d1/e} turns into
\[
1+\fl{\sum_{i\ge1}N_i-\frac {2} {d}}=0.
\]
This equation holds if and only if  $\sum N_i \in \{0, 1/d\}$.

If $\sum N_i=0$, then each $N_i$ is $0$, so that $d$ divides each~$n_i$. Condition~\eqref{tree-cond} then forces $d=2$, and we are in the second case of~\eqref{eq:10}. One then evaluates the left-hand side of~\eqref{eq:10} in this case using the simple fact~\eqref{eq:SM}.

If $\sum N_i=1/d$, then each $N_i$ is $0$ except one, say $N_\ell$, which equals $1/d$. This means  $d$ divides $n_\ell-1$ and each $n_i$
with $i\ne\ell$.
We are thus in the third  case of~\eqref{eq:10}. One then evaluates the left-hand side of~\eqref{eq:10} in this case using the simple fact~\eqref{eq:SM}. It is useful to recall that, due to~\eqref{tree-cond}, $d$~divides~$\ell$ in this case, so that in particular $\ell\neq 1$.
\end{proof}

\section{Connections with earlier work}
\label{sec:connections}
 
So far, we have stated all our cyclic sieving results  in the uniform language  of rooted plane trees.
However, it is well known that these trees are in bijection with many other families of Catalan objects. When we consider a class of trees ($\T(n,k)$, $\T(\bn)$, \dots), a bijection to other objects,   and a rotation on trees ($R$, $R_e$, $R_i$, or $R_\de$) that translates nicely through the bijection,  we obtain a cyclic sieving phenomenon for the image of the class of trees under the bijection. Some of these results are  new, and some are not.
We briefly discuss some examples in this section. We describe the bijections we use rather informally, with the help of
figures. We hope that this will be sufficient for the readers. 

\subsection{Non-crossing perfect matchings}
\label{sec:matchings}
We start with the classical bijection between non-crossing (perfect) matchings of~$2n$ points placed on a circle and rooted plane trees with $n$ edges, illustrated in Figure~\ref{fig:NCM-tree}. Then the (ordinary) rotation~$R$ on trees, of order~$2n$, rotates the matching by $(2\pi)/(2n)$ in clockwise order, that is, replaces the point $i$ by $i+1$, modulo~$2n$.

 \begin{figure}[htb]
    \centering
    \input{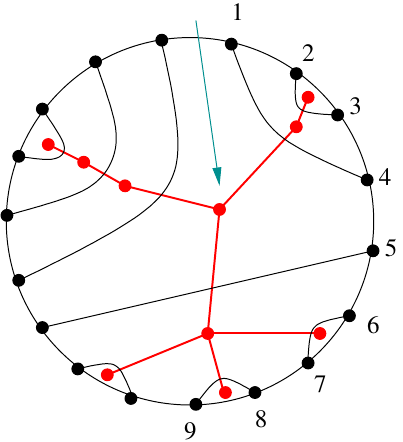_t}  
    \caption{A non-crossing matching on $2\times 9$ elements  and the corresponding rooted plane tree with $n=9$ edges. The number of short edges in the matching is $k=5$.} 
    \label{fig:NCM-tree}
  \end{figure}

  Our first theorem, Theorem~\ref{thm:ord}, thus translates into a cyclic sieving phenomenon for non-crossing perfect matchings, for this natural rotation. This result is not new: it is stated in~\cite[Thm.~8.1]{SagaCS}, and follows for instance by combining a bijection between non-crossing matchings and two-row rectangular Young tableaux with a cyclic sieving phenomenon for these tableaux under \emm promotion,; see Theorems 1.4 and 1.5 in~\cite{Petersen-web} as well as~\cite[Thm.~1.3]{rhoades}.
   Let us rephrase this perfect matching version of Theorem~\ref{thm:ord}  in a way that will be convenient in Section~\ref{sec:maps}.
  We denote by $\NCM(n)$ the set of non-crossing  matchings of $2n$~points.

  \begin{corollary}\label{cor:matchings}
    Let $\om$ be a primitive $2n$-th root of unity, and let $e$ be a non-negative integer less than $2n$, equal to $0$ or dividing  $2n$. Let $d:=2n/e$, with $d=\infty$ if $e=0$. The number of non-crossing matchings on $2n$~points left invariant by a rotation of $(2\pi e)/(2n)=2\pi/d$ is given by
    \begin{align*}
        \Fix_{2\pi/d} (\NCM(n))&=   \frac 1{[n+1]_q}  \qbinom{2n}{n} \Bigg\vert_ {q=\om^e}=   \frac 1{[n+1]_q}  \qbinom{2n}{n} \Bigg\vert_ {q=\e^{2\pi i/d}}
      \\
      &=
        \begin{cases}\frac {1} {n+1}\binom {2n}{n},&\text{if }e=0,\\\binom {n}{\lceil n/2 \rceil},&\text{if } d=2 \text{ with $n$ odd},\\
           \binom {2n/d}{n/d},&\text{if $d \mid n$},\\0,&\text{otherwise.}
      \end{cases}
    \end{align*}
  \end{corollary}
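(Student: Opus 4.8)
The plan is to transport the cyclic sieving phenomenon of Theorem~\ref{thm:ord} through the bijection between $\NCM(n)$ and $\T(n)$ recalled at the beginning of this section and illustrated in Figure~\ref{fig:NCM-tree}. First I would use the fact, stated there, that this bijection conjugates the ordinary rotation $R$ on trees (of order $2n$) with the rotation of the matching by $2\pi/(2n)$ sending point $i$ to $i+1$ modulo $2n$. Consequently, a matching is fixed by the rotation of angle $2\pi e/(2n)=2\pi/d$ if and only if its image tree is fixed by $R^e$, so that
\[
\Fix_{2\pi/d}(\NCM(n)) = \Fix_{R^e}(\T(n)).
\]

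Next I would apply Theorem~\ref{thm:ord}, whose defining property (Definition~\ref{def:siev}) gives, for any primitive $2n$-th root of unity $\om$,
\[
\Fix_{R^e}(\T(n)) = \frac{1}{[n+1]_q}\begin{bmatrix} 2n\\n\end{bmatrix}_q \Bigg\vert_{q=\om^e}.
\]
This establishes the first displayed equality of the corollary. The second equality, replacing $\om^e$ by $e^{2\pi i/d}$, follows from the Galois argument already used in the proof of Lemma~\ref{lem:divisors}: since $e$ divides $2n$, the power $\om^e$ is a primitive $d$-th root of unity (with $d=2n/e$), exactly as is $e^{2\pi i/d}$; as the $q$-Catalan polynomial has rational coefficients, its values at any two primitive $d$-th roots of unity coincide.

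Finally, the explicit four-case evaluation is a restatement of Lemma~\ref{lem:1}, read through the dictionary $d=2n/e$ supplied by the remark following that lemma. The case $e=0$ yields the Catalan number; the case $d=2$ with $n$ odd (that is, $e=n$ odd) yields $\binom{n}{(n+1)/2}=\binom{n}{\lceil n/2\rceil}$; the case $d\mid n$ (that is, $e$ even and positive) yields $\binom{e}{e/2}=\binom{2n/d}{n/d}$; and all remaining divisors give $0$. I do not expect any genuine obstacle here, since everything reduces to results proved earlier; the only point demanding a little care is the bookkeeping between the variables $e$ and $d$ and checking that the case distinctions of Lemma~\ref{lem:1} match those stated for matchings, in particular that the value $d=2$ with $n$ even already falls under the divisibility case $d\mid n$ rather than the half-turn case.
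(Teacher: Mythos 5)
Your proposal is correct and matches the paper's (implicit) justification exactly: the paper states Corollary~\ref{cor:matchings} as a direct translation of Theorem~\ref{thm:ord} through the bijection of Figure~\ref{fig:NCM-tree}, with the explicit values read off from Lemma~\ref{lem:1} via the dictionary $d=2n/e$. Your careful check that the case $d=2$ with $n$ even is absorbed into the case $d\mid n$ is the right bookkeeping point, and nothing is missing.
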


Through the bijection of Figure~\ref{fig:NCM-tree}, the
number of leaves of the tree, $k$ say, corresponds to the number of \emm short edges, of the matching, that is, edges of the form $(i,i+1)$, modulo~$2n$. This means that our second theorem, Theorem~\ref{thm:ord_leaves}, is equivalent to the refinement of the cyclic sieving phenomenon for non-crossing matchings obtained in~\cite[Thm.~4.8]{Alexandersson}. The equivalence is not immediately obvious because of an additional factor $q^{k(k-2)}$ in the cyclic sieving polynomial of the latter theorem: however, we note that this factor takes the value $1$ at $\om^e$ for each value of~$e$ for which at least one tree of  $\T(n,k)$ is fixed by~$R^e$ (cf.\ Lemma~\ref{lem:8}). One advantage of this additional factor is that now the sum of the refined cyclic sieving polynomials coincides with the $q$-Catalan polynomial~\cite[Prop.~4.5]{Alexandersson}:
    \[
      \sum_k  q^{k(k-2)} \cdot \frac {1+q^{n}}{[n-1]_q}
\begin{bmatrix} n-1\\k-2\end{bmatrix}_q
\begin{bmatrix} n\\k\end{bmatrix}_q=
  \frac {1}{[n+1]_q}\begin{bmatrix}
    2n\\n\end{bmatrix}_q.
\]
Using this summation
(which is a special case of Heine's $q$-analogue of Gau{\ss}' hypergeometric
$_2F_1[1]$-summation; see~\cite[Eq.~(1.5.2)]{GaRaAF}),
we could thus have derived Theorem~\ref{thm:ord} from Theorem~\ref{thm:ord_leaves}.

\medskip
Conversely, we can  translate some of  our results in terms of non-crossing matchings. One example is Theorem~\ref{thm:ext}, which translates into a nice cyclic sieving phenomenon  for non-crossing matchings containing the pair $(2n,1)$.

\subsection{Non-crossing partitions}
\label{sec:noncr}

We now consider another classical bijection, this time between non-crossing partitions of $n$~points on a circle~\cite{KrewAC} and rooted plane trees with $n$ edges (Figure~\ref{fig:NCP-tree}, left). It is not immediately obvious to understand how the ordinary rotation~$R$ on trees, of order~$2n$, translates in terms of partitions. However, the double rotation~$R^2$, of order~$n$, simply becomes the rotation by $2\pi/n$ on non-crossing partitions; that is, $i$~is replaced by~$i+1$ modulo~$n$. 

  \begin{figure}[htb]
    \centering
    \includegraphics[width=140mm]{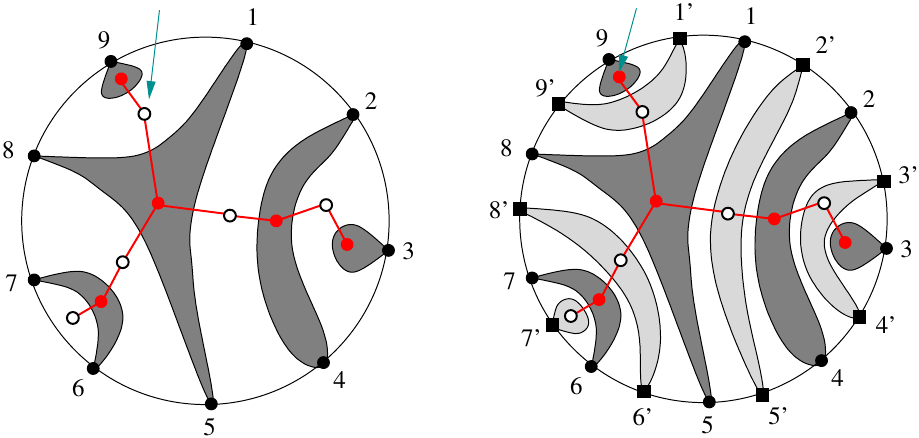}
    \caption{\emm Left,: A non-crossing partition on $n=9$ elements  and the corresponding rooted plane tree with $n$ edges.
    \emm Right,: Taking the Kreweras complement (shown in light grey) of the partition amounts to the ordinary rotation~$R$ on the tree.}
    \label{fig:NCP-tree}
  \end{figure}

  Hence our first theorem, Theorem~\ref{thm:ord}, gives by restriction to even values of~$e$ a (known) cyclic sieving phenomenon for rotation on non-crossing partitions of $n$~points, with cyclic sieving polynomial $\qbinom{2n}{n}_q/[n+1]_q$; see~\cite[Thm.~10.1]{SagaCS}, where this result is derived from a refined cyclic sieving phenomenon where the number of
  blocks of the partition is also fixed~\cite[Thm.~7.2]{ReSWAA}. See~\cite{bessis-reiner}
and Subsection~\ref{sec:part} for generalisations to non-crossing partitions associated with well-generated complex reflection groups (the above classical case corresponding to the symmetric group). 

However, it is also possible to describe
the action of the rotation~$R$ in terms of non-crossing partitions.
Indeed, it corresponds to taking the so-called \emm Kreweras complement, of the partition. This complement is obtained by inserting $n$ new vertices $1', 2', \ldots, n'$ between the original ones and organising them in blocks that correspond to the empty areas of the original partition (see Figure~\ref{fig:NCP-tree}, right).
Thus, our first theorem, Theorem~\ref{thm:ord}, also translates into a
(known) cyclic sieving phenomenon for complementation of non-crossing partitions, proved in~\cite[Thm.~1.5]{armstrong-uniform} for arbitrary finite Weyl groups, and in~\cite{KratCG,KrMuAD} for well-generated complex reflection groups;
see Subsection~\ref{sec:part}. According to Bessis and Reiner, who had conjectured this result~\cite[Conj.~6.5]{bessis-reiner} in the more general setting proved in~\cite{KratCG,KrMuAD}, the classical case was proved earlier by D.~White in a personal communication. Later
it was also established independently by C.~Heitsch~\cite{heitsch}. 
  
  \subsection{Triangulations and dissections of the disk}
  
     Another classical family in the Catalan zoo for which cyclic sieving results have been established is the set of triangulations of a polygon. There exists a simple bijection between leaf-rooted \emm ternary trees, with $k$~leaves and $k-2$ inner nodes of degree~$3$ --- that is, in our notation, trees of $\T_l(\bn)$, with $\bn=(k, 0, k-2, 0, \ldots)$ --- and triangulations of a $k$-gon; see Figure~\ref{fig:triang-tree}. 
     
 \begin{figure}[htb]
    \centering
    \includegraphics[width=60mm]{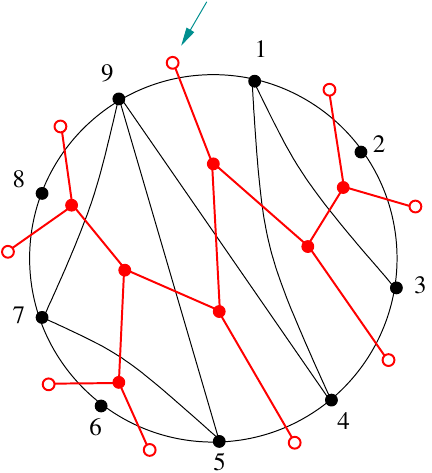}
    \caption{A triangulation of the $k$-gon, with $k=9$,  and the corresponding leaf-rooted ternary tree with $9$~leaves and $7$~inner nodes.}
    \label{fig:triang-tree}
  \end{figure}
 
 Through this bijection, the external rotation~$R_l$, of order~$k$ when considered on trees with $k$~leaves, becomes the rotation of triangulations by $2\pi/k$, which sends each point~$i$
to $i+1$. Hence, a special case of Theorem~\ref{thm:delta} (for $\delta=1$, and $\bn$ as above) gives a cyclic sieving phenomenon for triangulations of the disk, and this phenomenon is not new~\cite[Thm.~12.2]{SagaCS}.

More generally, let us call
any collection of non-intersecting diagonals $(i,j)$ with $|i-j|>1$
a \emm dissection, of the $k$-gon.
Equivalently, a dissection can be seen as an \emm outerplanar map, with faces of degree at least $3$ (see Figure~\ref{fig:dissection}). A simple bijection sends these dissections to leaf-rooted trees having no vertex of degree~$2$. The degree of a face is the degree of the node that lies in it.  Again, the external rotation on such trees corresponds to the clockwise rotation by~$2\pi/k$ of the dissection.  Hence the case $\de=1$, $n_2=0$  of Theorem~\ref{thm:delta} gives a cyclic sieving phenomenon for dissections of the $k$-gon with prescribed face degrees,
which we believed to be new\dots\
until we found it in a very recent independent preprint~\cite[Thm.~1]{adams-dissections}.
In particular, we obtain a cyclic sieving phenomenon for $m$-angulations of the $k$-gon, for $m$ fixed.

  Conversely, the known cyclic sieving phenomenon for dissections of the $k$-gon with $d$~edges~\cite[Thm.~7.1]{ReSWAA} gives a cyclic sieving phenomenon for the external rotation on trees with $d+1$ inner nodes and $k$~leaves, in which vertices of degree~$2$ are forbidden.

 \begin{figure}[htb]
    \centering
    \includegraphics[width=60mm]{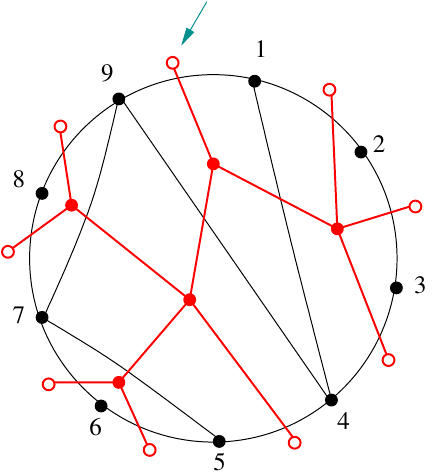}
    \caption{A dissection of the $k$-gon, with $k=9$, having $d=4$ diagonals, and the corresponding leaf-rooted tree with $k=9$ leaves and $d+1=5$ inner nodes. The degree of the faces of the dissection, or equivalently the degrees of the inner nodes of the tree, are $3,3, 3, 4$ and $4$.}
    \label{fig:dissection}
  \end{figure}

\subsection{Non-crossing partitions associated with reflection groups}
\label{sec:part}

The purpose of this subsection is to describe the setting of generalised
non-crossing partitions associated with well-generated complex
reflection groups as introduced by Armstrong~\cite{ArmsAA}, and
to explain how the general cyclic sieving results in~\cite{KratCG,KrMuAD}
cover the cyclic sieving results worked out in Subsections~\ref{sec:matchings}
and~\ref{sec:noncr}
for non-crossing matchings and non-crossing partitions.
Phrased differently, here we make the indications on results for
reflection groups made in these subsections explicit.

In order to do this, we need to recall the definition of {\it
  $m$-divisible non-crossing partitions associated with reflection groups.}
Let $W$ be a {\it well-generated complex reflection group} and $c$ a
{\it Coxeter element} in~$W$.
While, for our purposes, it is
entirely sufficient to assume that $W=S_n$, the
group of permutations of $\{1,2,\dots,n\}$, and $c=(1,2,\dots,n)$
(that is, the long cycle), we refer the interested
reader to~\cite{LeTaAA} for an excellent introduction into the
theory of complex reflection groups.
The {\it reflection length} $\ell_T(w)$
of an element~$w\in W$ is defined as the
smallest~$\ell$ such that $w=t_1t_2\cdots t_\ell$ with all~$t_i$'s
being reflections in~$W$.
For example, for the case of $W=S_n$ and $c=(1,2,\dots,n)
=(1,2)(2,3)\cdots(n-1,n)$, we have
$\ell_T(c)=n-1$ since the reflections in~$S_n$ are the
transpositions. Given these ingredients, 
Armstrong's~\cite[Sec.~3.3]{ArmsAA} definition of $m$-divisible non-crossing
partitions reads as follows.

\begin{definition}
Let $W$ be a well-generated complex reflection group and $c$ a fixed
Coxeter element in~$W$. For a positive integer~$m$,
the set of $m$-divisible non-crossing partitions for~$W$, denoted by
$NC^{(m)}(W)$, is defined by
\begin{multline*}
NC^{(m)}(W)=\big\{(w_0;w_1,\dots,w_m):w_0w_1\cdots w_m=c\text{ and }\\
\ell_T(w_0)+\ell_T(w_1)+\dots+\ell_T(w_m)=\ell_T(c)\big\}.
\end{multline*}
\end{definition}

\begin{remarks}
(1)
In the case where $m=1$, we write $NC(W)$ instead of $NC^{(1)}(W)$
for short and call this the set of (ordinary) non-crossing partitions
associated with the reflection group~$W$. Moreover, an element
$(w_0;w_1)$ in $NC(W)=NC^{(1)}(W)$ (whose product equals the Coxeter
element~$c$) is usually identified with~$w_0$. 

\medskip
(2)
We have suppressed the dependence on the Coxeter element~$c$
in the notation, since here
we are only interested in enumeration. Indeed,
it can be shown that any two Coxeter elements are related to each other
by conjugation and (possibly) an automorphism on the field of complex
numbers
(see \cite[Thm.~4.2]{SpriAA} or \cite[Cor.~11.25]{LeTaAA}).
It follows that any two sets $NC^{(m)}(W)$ corresponding to
different Coxeter elements
can be bijectively mapped to each other by these transformations. 
\end{remarks}

In the case of $W=S_n$, Armstrong \cite[Thm.~4.3.8]{ArmsAA} described
a translation of the above abstract definition of $m$-divisible
non-crossing partitions into the combinatorial language of
non-crossing partitions according to Kreweras
(those of Subsection~\ref{sec:noncr}).
The resulting
partitions are the $m$-divisible non-crossing partitions of
$\{1,2,\dots,mn\}$ of Edelman~\cite{EdelAA}, which are defined by the
property that all block sizes are divisible by~$m$.
Given an element
 $(w_0;w_1,\dots,w_m) \in NC^{(m)}(S_n)$,
the bijection, $\Na_{S_n}$ say, 
from \cite[Thm.~4.3.8]{ArmsAA} 
works as follows: for $i=1,2,\dots,m$, let $\ta_{m,i}$ be
the transformation that maps a permutation $w\in S_{n}$ to a
permutation $\ta_{m,i}(w)\in S_{mn}$ by letting
\[(\ta_{m,i}(w))(mk+i-m)=mw(k)+i-m,\quad k=1,2,\dots,n,\] 
and 
$(\ta_{m,i}(w))(l)=l$ for all $l\nequiv i$~(mod~$m$).
With the choice of Coxeter element $c=(1,2,\dots,n)$, 
the announced bijection maps 
$(w_0;w_1,\dots,w_m)\in NC^{(m)}(S_{n})$ to
\[ 
\Na_{S_n}(w_0;w_1,\dots,w_m)=
(1,2,\dots,mn)\,(\ta_{m,1}(w_1))^{-1}\,(\ta_{m,2}(w_2))^{-1}\,\cdots\,
(\ta_{m,m}(w_m))^{-1},
\]
where the cycles in the disjoint cycle decomposition
of the above permutation correspond to
the blocks in the non-crossing partition of
$\{1,2,\dots,mn\}$.
We refer the reader to \cite[Sec.~4.3.2]{ArmsAA} for the details.

\begin{example} \label{ex:9}
Let $n=7$, $m=3$, $w_0=(4,5,6)$,
$w_1=(3,6)$,
$w_2=(1,7)$, and
$w_3=(1,2,6)$. Then $(w_0;w_1,w_2,w_3)$ is mapped to
\begin{align*}
\Na_{S_{7}}(w_0;w_1,w_2,w_3)&=(1,2,\dots,21)\,(7,16)\,
(2,20)\,(18,6,3)\\
&=(1,2,21)\,(3,19,20)\,(4,5,6)\,(7,17,18)\,(8,9,\dots,16),
\end{align*}
which gives a partition of $\{1, \ldots, 21\}$ with one block of size~$9$ and four blocks of size~$3$.
\end{example}

If $m=1$,
then $\Na_{S_n}$ takes a pair
$(w_0;w_1)\in NC^{(1)}(S_n)=NC(S_n)$, ignores~$w_1$, and interprets~$w_0$
as a partition of $\{1,2,\dots,n\}$ by converting each cycle
in the disjoint cycle decomposition of~$w_0$ into a block containing
the numbers involved in that cycle. It was already shown by
Biane~\cite{BianAA} that this produces indeed a non-crossing partition.
Conversely, one recovers $w_0$ from the partition by associating to each  block 
 a cycle in which the elements of the block are ordered clockwise; in other words, each cycle of~$w_0$ is \emm increasing,, meaning that it can be written $(i_1, i_2, \ldots, i_k)$ for an increasing sequence $i_1<i_2 <\cdots < i_k$.

\begin{example}
Let $m=1$, $W=S_9$,
\[ 
w_0=(1,5,8)(2,4)(6,7)
\quad \text{and}\quad
w_1=(1,4)(2,3)(5,7)(8,9).
\]
Then we have $w_0w_1=(1,2,\dots,9)=c$,
and thus the above described identification produces the non-crossing
partition $\{\{1,5,8\},\{2,4\},\{3\},\{6,7\},\{9\}\}$, that is,
the non-crossing partition shown on the left of Figure~\ref{fig:NCP-tree}.
\end{example}

Armstrong \cite[Thm.~4.3.13]{ArmsAA} has shown that the cycle
structure of the first
component of an $m$-divisible non-crossing partition
determines the block structure of its image under his bijection.

\begin{proposition} \label{prop:block}
Let $(w_0;w_1,\dots,w_m)\in NC^{(m)}(S_{n})$. The non-crossing partition\break
$\Na_{S_{n}}(w_0;w_1,\dots,w_m)$ of $\{1,2,\dots,mn\}$ has $b_i$ blocks of
size~$mi$ if and only if $w_0$ has $b_i$~cycles of length~$i$
in its disjoint cycle decomposition. 
\end{proposition}

In Example~\ref{ex:9}, we have $w_0=(4,5,6)=(1)(2)(3)(7)(4,5,6)$ which has one cycle
of length~3 and four cycles of length~1. Indeed, the image of
$(w_0;w_1,w_2,w_3)$ in the example has one block of size $3\cdot 3=9$
and four blocks of size $3\cdot 1=3$.

\medskip
We introduced the concept of Kreweras complement in
Subsection~\ref{sec:noncr} (by way of an example; see Figure~\ref{fig:NCP-tree}). 
As shown by Kreweras \cite[p.~339]{KrewAC}, this may be equivalently
realised
in the language of permutations.
Namely, given a non-crossing partition~$\pi$,
and the associated permutation $w_0$ constructed as above,
the Kreweras complement of~$\pi$ is the partition associated with
the permutation $c w_0^{-1}$,
where
$c=(1,2,\dots,n)$ is again the long cycle.
It is useful to note that the cycles of $c w_0^{-1}$  are also increasing;
see Example~\ref{ex:complement} below.

More generally,
the \emm Kreweras complement for well-generated
complex reflection\break groups~$W$, is defined as the map
\beq\label{def:complement}
w\mapsto cw^{-1}, \quad \text{for }w\in NC(W),
\eeq
under the identification of $NC(W)$ with the first component
of the tuples $(w_0;w_1)$ lying in $NC(W)=NC^{(1)}(W)$.

\begin{example}\label{ex:complement}
Consider the non-crossing partition
\[
\{\{1,5,8\},\{2,4\},\{3\},\{6,7\},\{9\}\}
\]
on the left of Figure~\ref{fig:NCP-tree}.
We have
\begin{align*}
(1,2,\dots,9) \big(  (1,5,8)(2,4)(6,7)\big)^{-1}
&=(1,2,\dots,9)   (1,8,5)(2,4)(6,7)\\
&= (1,9)(2,5)(3,4)(6,8).
\end{align*}
We see that this is indeed the Kreweras complement indicated by
the light-grey blocks on the right of Figure~\ref{fig:NCP-tree}.
\end{example}

Algebraically, the two-fold application of the Kreweras complement is
\[
w\mapsto c\left(cw^{-1}\right)^{-1}=cwc^{-1},
\]
that is, conjugation by the Coxeter element~$c$.
Hence, in the special case where $W=S_n$, the two-fold application of
the Kreweras complement acts by mapping each number~$i$ to~$i+1$
modulo~$n$, as we already observed in Subsection~\ref{sec:noncr}.

\medskip
We are now able to discuss the known cyclic sieving phenomena for
Armstrong's\break $m$-divisible non-crossing partitions for reflection
groups. Namely,
Armstrong defined an action on
$m$-divisible non-crossing partitions in~\cite[Def.~3.4.15]{ArmsAA} by 
\[
A:    (w_0;w_1,\dots,w_m) \mapsto (v;c  w_m  c^{-1}
,w_1,\dots,w_{m-1}),
\]
where $v =
(c  w_m  c^{-1})  w_0 (c  w_m  c^{-1})^{-1}$.
Armstrong shows that this action generates a cyclic group of
order~$mh$, where $h$ is the {\it Coxeter number} of~$W$, that is,
the largest of the {\it invariant degrees} of~$W$.
For $W=S_n$ the Coxeter
number is~$n$. Therefore, in the case of the symmetric group~$S_n$,
the above action has order~$mn$. Using the bijection~$\Na_{S_n}$, this
can also be seen from the fact that the action~$A$ in terms of
Edelman's $m$-divisible non-crossing
partitions is given by {\it rotation}, i.e.,
every element~$i$ in a partition is replaced by~$i+1$, taken
modulo~$mn$.
In particular, when $m=1$ we recover the rotation on non-crossing partitions that was identified with $R^2$ in Subsection~\ref{sec:noncr}.  

Putting results from~\cite{KratCG} and~\cite{KrMuAD} together, we
have the following cyclic sieving phenomenon.

\begin{theorem} \label{thm:TW1}
The triple $(NC^{(m)}(W),\langle A\rangle,\Cat^{(m)}(W;q))$
exhibits the cyclic sieving phenomenon, where
\[
\Cat^{(m)}(W;q):=\prod_ {i=1}^n \frac {[mh+d_i]_q} {[d_i]_q},
\]
$n$ is the rank of $W$,  $(d_1,d_2,\dots,d_n)$  the list of invariant degrees of~$W$, and $h=d_n$  the Coxeter number of~$W$.
\end{theorem}

\begin{remark}
We refer the reader again to~\cite{LeTaAA} for the definitions of
the rank and the
invariant degrees. For our discussion, the only fact that is relevant
is that 
\begin{equation} \label{eq:CatW} 
\Cat^{(m)}(S_n;q)=\frac {1}
{[n]_q}\begin{bmatrix}(m+1)n\\n-1 \end{bmatrix}_q.
\end{equation}
For $m=1$, this is 
MacMahon's
$q$-Catalan polynomial, and one recovers the cyclic sieving phenomenon for non-crossing partitions under rotation mentioned in Subsection~\ref{sec:noncr}.
\end{remark}

In the case where $w_0 = \text{id}$, the action~$A$ reduces to
\begin{equation} \label{eq:action0}
A:    (\text{id};w_1,\dots,w_m) \mapsto (\text {id};c  w_m  c^{-1}
,w_1,\dots,w_{m-1}). 
\end{equation}

Combining again results from~\cite{KratCG} and~\cite{KrMuAD}, we
obtain a second cyclic sieving phenomenon.

\begin{theorem} \label{thm:TW2}
Let $NC^{(m;0)}(W)$ denote the subset of $NC^{(m)}(W)$ consisting
of those elements for which $w_0=\text{\em id}$.
Then the triple $(NC^{(m;0)}(W),\langle A\rangle,\Cat^{(m-1)}(W;q))$
exhibits the cyclic sieving phenomenon.
\end{theorem}

Let us consider the case where $m=2$ and $W=S_n$, that is, the
$2$-divisible non-crossing partitions $(\text{id};w_1,w_2)$ for~$S_n$.
By Proposition~\ref{prop:block}, the map $\Na_{S_{n}}$ sends them to partitions of $\{1, 2, \ldots, 2n\}$ in which all blocks have cardinality $2$, hence to non-crossing \emm matchings,. We will come back to this at the end of this section. For the moment, observe that
 there is an alternative point of view on these objects:
forgetting $w_0=\id$,
these are the pairs $(w_1,w_2)$ with
$w_1   w_2 = c$ and $\ell_T(w_1) + \ell_T(w_2) = \ell_T(c) =
n-1$.
So we simply recover non-crossing partitions of $NC(S_n)$.

Obviously, $w_1$ determines $w_2$. The action~\eqref{eq:action0}
reduces to
\[
    (w_1,w_2) \mapsto (c  w_2  c^{-1}  ,w_1),
\]
or, if we focus on the first component,
\[
 w_1 \mapsto c  w_2  c^{-1}
 = c (w_1^{-1}    c)  c^{-1}   = c  w_1^{-1}   .
\]
As we can see, this is exactly the Kreweras complement
defined in~\eqref{def:complement}.
Hence, the special case of Theorem~\ref{thm:TW2} where $m=2$ and
$W=S_n$ is equivalent to the cyclic sieving result for
non-crossing partitions under the action of the Kreweras
complement mentioned in Subsection~\ref{sec:noncr}
(and, thus, also to Theorem~\ref{thm:ord}).

\medskip
Finally,
in order to understand the connection with the non-crossing (perfect)
matchings of Subsection~\ref{sec:matchings},
we still consider the $2$-divisible non-crossing partitions\break $(\id; w_1, w_2)$ for~$S_n$, but now we map them using $\Na_{S_n}$ to non-crossing partitions in which all blocks have size $2$; in other words, to non-crossing
matchings of $2n$ points.
More precisely, the blocks of the matching associated with $(\id; w_1, w_2)$ can be seen to be the pairs $\{2k, 2 w_1(k)-1\}$, for $1\le k \le n$, or equivalently the pairs $\{2 w_2(k), 2k+1\}$.
Now apply Armstrong's action:
\[
     (\text{id};w_1,w_2) \mapsto (\text{id};cw_2c^{-1}  ,w_1).
\]
The  matching corresponding to the new $2$-divisible partition  has blocks $\{2w_1(k), 2k+1\}$,
which means that the action $A$ translates into standard rotation on matchings (that is, shifting everything by one unit). 
Hence, we see that the special case of 
Theorem~\ref{thm:TW2} where $m=2$ and $W=S_n$ is also equivalent to
Corollary~\ref{cor:matchings}
(and, thus again, to Theorem~\ref{thm:ord}).

  \begin{example}
  Let $n=9$,
$w_1=(1,5,8)(2,4)(6,7)$ and 
$w_2=(1,4)(2,3)(5,7)(8,9)$.
Then the matching associated with $(\id;w_1, w_2)$ by $\Na_{S_9}$ has blocks
\[
  \{ 1,16 \},   \{ 2,9 \},   \{ 3,8 \},   \{4 ,7 \},   \{ 5,6 \},   \{10 ,15 \},   \{11 ,14 \},   \{12 ,13 \},   \{17 ,18 \}.
\]
One can check that the blocks of the matching associated with $ (\text{id};cw_2c^{-1}  ,w_1)$ are
\[
  \{ 2,17 \},   \{ 3,10 \},   \{ 4,9 \},   \{5 ,8 \},   \{ 6,7 \},   \{11 ,16 \},   \{12 ,15 \},   \{13 ,14 \},   \{18 ,1 \}.
\]
\end{example}

\section{Tree-rooted planar maps and their cyclic sieving}
\label{sec:maps}

The aim of this section is to establish cyclic sieving phenomena for \emm tree-rooted maps,, that is, planar maps equipped with a distinguished spanning tree. A rotation that generalises the ordinary rotation~$R$ acts on these objects.

We begin with some definitions, and then prove three cyclic sieving results;
see Theorems~\ref{thm:TMij}, \ref{thm:TMn}, and~\ref{thm:TMd} below.
The existence of such phenomena was actually the original motivation of this paper. To finish, we translate these results  in terms of cubic maps equipped with a Hamiltonian cycle, and in terms of lattice walks confined to a quadrant.

\subsection{Definitions}
\label{sec:defs-maps}

A (\emph{planar}) \emph{map} is a proper embedding of a connected planar graph in the
oriented sphere, considered up to orientation preserving
homeomorphism. Loops and multiple edges are allowed
(see Figure~\ref{fig:example-map}, left). The \emph{faces} of a map are the
connected components of its complement. The numbers of
vertices, edges, and faces of a planar map~$\m$, denoted by~$\vv(\m)$,
$\ee(\m)$, and~$\ff(\m)$, are related by Euler's relation
\[
  v(\m)+\ff(\m)=\ee(\m)+2.
\]
The \emph{degree} of a vertex
is the number
of half-edges incident to it (a loop thus 
contributes~$2$ to the degree of the vertex where it is attached).
A \emph{corner} is
a sector delimited by two consecutive half-edges around a vertex;
hence  a vertex
of degree~$k$ is incident to~$k$~corners.
A \emm rooted, map is a map with a distinguished corner.
It is equivalent to distinguish and orient one edge (the one that follows the root corner in counterclockwise order, oriented away from the root vertex).
All our maps are planar and rooted. We often draw them in the plane by putting the root corner in the infinite face.

\begin{figure}[h]  \centering 
  \includegraphics{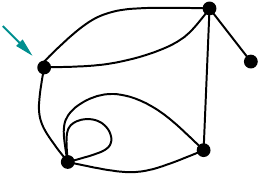}
\hskip 20mm
  \includegraphics{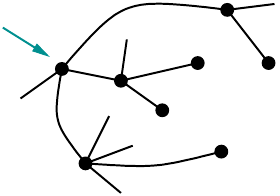}
  \caption{\emm Left,: a rooted planar map.
    \emm Right,: a rooted b-tree with $8$ vertices and $6$ buds.}
  \label{fig:example-map}
\end{figure}

Observe that a rooted tree, as defined in Section~\ref{sec:enum},  is just a rooted one-face map. A \emm b-tree,  is a tree with additional half-edges, called \emm buds,,  attached to the vertices; see 
the right of Figure~\ref{fig:example-map} for an example.
A bud contributes $1$ to the degree of the vertex to which it is attached.
Adding a bud to a vertex also increases
the number of corners by~$1$.
Note that there is no corner at the open (vertex-less) endpoint of a bud. A \emm rooted, b-tree is a b-tree with a distinguished corner.

A \emm tree-rooted map,, in Mullin's terminology~\cite{mullin-boisees}, is a rooted map with a distinguished spanning tree. A tree-rooted map with $i+1$ vertices and $j+1$ faces (and, thus, $n=i+j$ edges) can be decomposed into a rooted b-tree having $i+1$ vertices and $2j$ buds, coupled with a non-crossing matching of the $2j$ buds (see Figure~\ref{fig:tree-rooted}). This correspondence is bijective. Note that all corners of the map are incident to the spanning tree.

  \begin{figure}[htb]
    \centering
    \includegraphics[width=130mm]{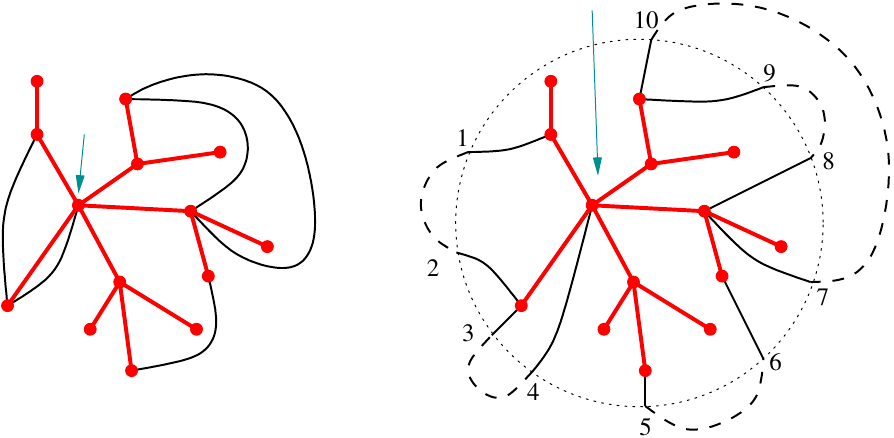}
    \caption{A tree-rooted map with $i+1=14$ vertices and $j+1=6$ faces (left). It can be seen as a rooted b-tree with $14$ vertices and $2j=10$ buds, coupled with a non-crossing matching of the $10$ buds (shown in dashed lines on the right).}
    \label{fig:tree-rooted}
  \end{figure}

\medskip
On tree-rooted maps, we introduce
an extension of the ordinary rotation~$R$ that we defined on rooted trees in Section~\ref{sec:rotations}.
By definition, it acts by moving the root corner to the next corner
of the map
around the spanning tree, in counterclockwise order; see an example in Figure~\ref{fig:rotation-tree-rooted}.
(In order to understand why the third orbit has cardinality $2$ only,
we have to remember
that maps are drawn \emm on the sphere,.) On tree-rooted maps with $n$ edges,
this extended rotation, which we still denote by~$R$,
realises an action of the cyclic group of order~$2n$.
On the other hand, the rotation~$R$ acts similarly on b-trees. Given that b-trees  with $n+1$ nodes and $b$ buds have $2n+b$~corners, this rotation gives an action of the cyclic group of order~$2n+b$ on such trees.
We shall make use of this separate consideration as a step towards
the proofs of Theorems~\ref{thm:TMij} and~\ref{thm:TMd}, see
Propositions~\ref{prop:BTij} and~\ref{thm:BTd}.
  
  \begin{figure}[htb]
    \centering
    \input{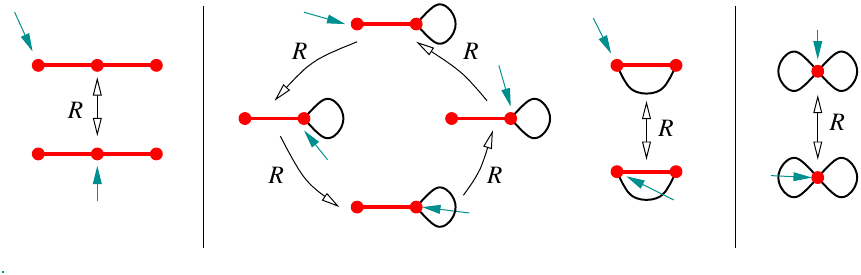_t}
    \caption{The $10$ tree-rooted maps with $n=2$~edges ($i+1$~vertices, $j+1$~faces), and the action of the rotation~$R$ on these maps. The spanning trees are shown in thick red lines.}
    \label{fig:rotation-tree-rooted}
  \end{figure}

\subsection{Tree-rooted maps with prescribed number of vertices and faces}

For non-negative integers $i$ and $j$,
we denote by $\TM(i,j)$ the set of tree-rooted maps having $i+1$ vertices and $j+1$ faces. (Note the shift in the variables $i$ and $j$; this is analogous to $\T(n)$ being the set of rooted trees with $n$ edges, hence $n+1$ vertices.)

\begin{theorem} \label{thm:TMij}
Let $i$ and $j$ be non-negative integers. 
Then the cyclic group of order $2i+2j$ acts on $\TM(i,j)$ by the rotation~$R$, and the triple
\[
  \left(\TM(i,j),\langle R\rangle,
   \frac 1 {[i+1]_q[j+1]_q} \qbinom{2i+2j}{i,i,j,j}_q
  \right)
\]
exhibits the cyclic sieving phenomenon.
\end{theorem}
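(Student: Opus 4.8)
The plan is to exploit the bijective decomposition (described just above the theorem) of a tree-rooted map into a rooted b-tree carrying $2j$ buds together with a non-crossing matching of those buds, and to show that the rotation $R$ respects this decomposition. The first observation is that the proposed cyclic sieving polynomial factors as a product of three manifestly non-negative polynomials,
\[
\frac {1}{[i+1]_q[j+1]_q}\qbinom{2i+2j}{i,i,j,j}_q
=\left(\frac{1}{[i+1]_q}\qbinom{2i}{i}_q\right)\qbinom{2i+2j}{2j}_q\left(\frac{1}{[j+1]_q}\qbinom{2j}{j}_q\right),
\]
the outer factors being $q$-Catalan numbers and the middle one a $q$-binomial coefficient. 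This immediately settles polynomiality and non-negativity of the coefficients (in the paper's scheme this is the content of the first lemma), and at $q=1$ it reproduces Mullin's count $|\TM(i,j)|=\frac{(2i+2j)!}{(i+1)!\,i!\,(j+1)!\,j!}$, which disposes of the case $e=0$.

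By Lemma~\ref{lem:divisors} it suffices to compute $\Fix_{R^e}(\TM(i,j))$ for $e=0$ and for $e$ a divisor of $2n$, where $n=i+j$; write $d=2n/e$, so that $\om^e$ is a primitive $d$-th root of unity. The heart of the argument is a factorization of the fixed-point count. Since $R$ only moves the root corner, under the bijection it sends a pair (b-tree $B$, matching $M$) to $(R^eB,\sigma_{e,B}M)$, where $\sigma_{e,B}$ is the permutation of the $2j$ buds induced by shifting the tour of $B$ by $e$ corners; thus $(B,M)$ is fixed iff $B$ is fixed by $R^e$ and $M$ is $\sigma_{e,B}$-invariant. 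The key structural point, to be proved exactly as in Proposition~\ref{prop:structure}, is that a fixed $B$ has a $d$-fold rotational symmetry whose $d$ sectors contain equally many buds; consequently $\sigma_{e,B}$ is the free rotation by $2j/d$ positions on the cyclically ordered buds, an element of order exactly $d$, and in particular the same for every fixed $B$. Hence
\[
\Fix_{R^e}(\TM(i,j))=\bigl(\#\{B:\ R^eB=B\}\bigr)\cdot\Fix_{2\pi/d}(\NCM(j)),
\]
and by Corollary~\ref{cor:matchings} the second factor equals $\frac{1}{[j+1]_q}\qbinom{2j}{j}_q$ evaluated at $q=\om^e$.

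It then remains to count b-trees fixed by $R^e$. I would establish this ``b-tree cyclic sieving phenomenon'' along the now-standard scheme: a structural proposition (the analogue of Proposition~\ref{prop:structure}, applying $\Phi$ and $\Psi_d$ to the spanning tree and carrying the buds along) reduces a fixed b-tree to a fundamental b-tree with $i/d+1$ nodes, $2j/d$ buds, and a marked node, giving $\#\{B:R^eB=B\}=\binom{e}{i/d,\,i/d,\,2j/d}$ in the generic even case (and the corresponding binomial in the edge-centred case $d=2$, $i$ odd). A quicker alternative, which reinforces the amalgam theme, is to note that a b-tree itself splits as a plane tree with $i$ edges together with the choice of which $2j$ of the $2n$ tour positions carry buds: this identifies $\frac{1}{[i+1]_q}\qbinom{2i+2j}{i,i,2j}_q$ with the product of the $q$-Catalan of Theorem~\ref{thm:ord} and the middle $q$-binomial, and the fixed-point count with the product of the Theorem~\ref{thm:ord} count and the classical Reiner--Stanton--White count of $2j$-subsets of $\Z/2n\Z$ fixed by $R^e$. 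Either way the evaluation of each $q$-factor at the primitive $d$-th root of unity $\om^e$ is carried out precisely as in Lemma~\ref{lem:1}, by grouping the $q$-integers into ratios whose arguments differ by a multiple of $d$ and applying~\eqref{eq:SM}.

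Multiplying the three evaluations and invoking the polynomial factorization above yields $\Fix_{R^e}(\TM(i,j))=P(\om^e)$ for all relevant $e$, the degenerate cases being automatic: when $d\nmid j$ and $d\neq2$ the matching factor vanishes, and so does the $q$-Catalan in $j$ at $\om^e$, so both sides are $0$. The step I expect to be the main obstacle is the fixed-point factorization itself, namely verifying that $R$ intertwines correctly with the (b-tree, matching) bijection and that the induced bud permutation $\sigma_{e,B}$ is genuinely the free order-$d$ rotation, so that the matching contributes a factor independent of $B$. The delicate configuration is $d=2$, where the spanning tree may have a central edge while the matching carries a central diameter; the point to check carefully is that these two central phenomena decouple, precisely because one lives in the b-tree factor and the other in the matching factor.
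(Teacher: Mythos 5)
Your proposal is correct and follows essentially the same route as the paper: the fixed-point count is factored through the (b-tree, non-crossing matching) decomposition into $\Fix_{R^e}(\BT(2j,i))\cdot\Fix_{2\pi/d}(\NCM(j))$, the b-tree factor is handled by a separate cyclic sieving result proved via the $\Phi$/$\Psi_d$ structure theory (the paper's Proposition~\ref{prop:BTij}), the matching factor by Corollary~\ref{cor:matchings}, and polynomiality by the same three-factor splitting of the $q$-multinomial. Your alternative suggestion of counting fixed b-trees as (tree fixed by $R^{2i/d}$) $\times$ (invariant $2j$-subset of positions) gives the right numbers but would still need the structural proposition to justify the equivariance, so it is not actually a shortcut.
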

 
The case $q=1$ is due to Mullin~\cite[Eq.~(4.6)]{mullin-boisees}.  The above theorem extends Theorem~\ref{thm:ord}, which corresponds to the case $j=0$.

\begin{example}
  For $i=2, j=0$, one finds the polynomial $1+q^2$, in agreement with the first orbit in Figure~\ref{fig:rotation-tree-rooted}. We obtain the same polynomial (which is expected, by a duality argument) for $i=0,j=2$. In the more interesting case where $i=j=1$, we obtain $P(q)=(1+q^2)(1+q+q^2)$, and one can check that cyclic sieving holds indeed with the help of Figure~\ref{fig:rotation-tree-rooted}.
  \end{example}

Recall that a tree-rooted map is obtained by coupling a b-tree with a perfect matching (cf.\ Figure~\ref{fig:tree-rooted}). Accordingly, we will show that the above cyclic sieving phenomenon results from the combination of two cyclic sieving results, one for b-trees (established in the proposition below), and one for non-crossing matchings (Corollary~\ref{cor:matchings}).

\begin{proposition}\label{prop:BTij}
  Let $\BT(b,n)$ be the set of b-trees with $b$~buds and $n+1$~nodes. The cyclic  group of order $m:=2n+b$ acts on $\BT(b,n)$ by the rotation~$R$, and the triple 
  \[
    \left(\BT(b,n),\langle R\rangle,
    \frac 1 {[n+1]_q}  \qbinom{2n+b}{b,n,n}_q
    \right)
\]
exhibits the cyclic sieving phenomenon.
\end{proposition}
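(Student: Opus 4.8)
The plan is to follow the three-step scheme used throughout Sections~\ref{sec:ordinary}--\ref{sec:int_deg}: reduce to the divisors of $m=2n+b$ via Lemma~\ref{lem:divisors}, settle polynomiality and non-negativity, evaluate the candidate polynomial at roots of unity, and finally count the b-trees fixed by $R^e$. The observation that makes everything transparent is the factorisation
\[
\frac 1{[n+1]_q}\qbinom{2n+b}{b,n,n}_q=\qbinom{2n+b}{b}_q\cdot\frac 1{[n+1]_q}\qbinom{2n}{n}_q .
\]
The second factor is the $q$-Catalan polynomial, already known (MacMahon, as recalled in the proof of Theorem~\ref{thm:ord}) to have non-negative coefficients, and the first is an ordinary $q$-binomial coefficient, which has the same property. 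Since a product of polynomials with non-negative coefficients again has non-negative coefficients, the polynomiality statement is immediate and, pleasantly, avoids the reciprocity/unimodality arguments needed in the earlier sections.

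The heart of the proof is a structural description of the b-trees fixed by $R^e$, generalising Proposition~\ref{prop:structure}. I would define the centre of a b-tree to be the centre of its underlying plane tree (all buds erased), and observe that a rotational symmetry must fix this centre and cannot fix any bud, since a bud points in a definite direction whereas a non-trivial rotation about the centre fixes no direction. Writing $d=m/e$, this forces the buds into free $d$-orbits, so $d\mid b$, and leaves exactly two configurations. In the \emph{edge-centred} case the symmetry is the half-turn $R^{m/2}$, which requires $d=2$, $n$ odd and $b$ even; the transformation $\Phi$ (cut the central edge, keep the rooted half, add a marked leaf at the cut) identifies these trees with elements of $\BT(b/2,\frac{n-1}2)$ carrying an extra marked non-root leaf. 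In the \emph{vertex-centred} case the central vertex has total degree (buds included) divisible by $d$, one needs $d\mid n$ and $d\mid b$, and the transformation $\Psi_d$ (keep one of the $d$ identical angular sectors, mark the centre) identifies the fixed trees with the elements of $\BT(b/d,n/d)$ carrying a marked node.

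From the structure theorem the fixed-point counts follow by the same bookkeeping as in the proofs of Theorems~\ref{thm:ord} and~\ref{thm:ord_deg}. In the vertex-centred case one gets
\[
\Fix_{R^e}(\BT(b,n))=\Big(\tfrac nd+1\Big)\,\big|\BT(\tfrac bd,\tfrac nd)\big|=\binom{e}{b/d,\,n/d,\,n/d},
\]
while in the edge-centred case a reconstruction argument (start from a b-tree of $\BT(\frac b2,\frac{n-1}2)$, graft a new marked leaf at the root corner, then re-root at any of the remaining $n+\frac b2$ corners) gives $(n+\frac b2)\,|\BT(\frac b2,\frac{n-1}2)|$. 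These two expressions must then be matched with a root-of-unity evaluation lemma, which I would prove exactly as Lemmas~\ref{lem:1} and~\ref{lem:9}: writing $P(q)=[2n+b]_q!/([b]_q!\,[n]_q!\,[n+1]_q!)$ and using \eqref{qint0}, the multiplicity of a primitive $d$-th root $\rho=\om^e$ comes out as $e_d=\{b/d\}+2\{n/d\}-\fl{\{n/d\}+\tfrac1d}$, which vanishes precisely in the two cases $(d=2,\ n\text{ odd},\ b\text{ even})$ and $(d\mid n\text{ and }d\mid b)$ singled out above; the actual values then fall out of \eqref{eq:SM} after grouping the $q$-integers into ratios differing by a multiple of $d$.

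The main obstacle I anticipate is not any single computation but the careful matching in the edge-centred case: one must check that the mildly awkward quantity $(n+\frac b2)\,|\BT(\frac b2,\frac{n-1}2)|$ coincides with the value of $P(q)$ at $q=-1$, and that the case distinctions produced by the structure theorem ($\Phi$ versus $\Psi_d$, together with $d\mid b$) line up exactly with the vanishing pattern of $e_d$. The consistency check at $b=0$, where the edge-centred formula reduces to the value $\binom{n}{(n+1)/2}$ of Lemma~\ref{lem:1}, is reassuring and worth recording. Once Proposition~\ref{prop:BTij} is in place, Theorem~\ref{thm:TMij} follows by combining it with the matching count of Corollary~\ref{cor:matchings} through the b-tree/matching decomposition of tree-rooted maps.
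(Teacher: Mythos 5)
Your proposal is correct and follows essentially the same route as the paper: the same factorisation into a $q$-binomial coefficient times the $q$-Catalan polynomial for polynomiality, the same extension of $\Phi$ and $\Psi_d$ to b-trees for the structural description and fixed-point count, and the same cyclotomic multiplicity computation $\{b/d\}+2\{n/d\}-\lfloor\{n/d\}+\tfrac1d\rfloor$ for the evaluation lemma. The only differences are immaterial bookkeeping choices in the edge-centred case (you mark a leaf where the paper marks a bud, giving the equivalent count $(n+\tfrac b2)\,|\BT(\tfrac b2,\tfrac{n-1}2)| = (1+\tfrac b2)\,|\BT(1+\tfrac b2,\tfrac{n-1}2)|$).
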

Note that this result specialises to Theorem~\ref{thm:ord} when $b=0$.

\begin{proof}
  Polynomiality and non-negativity of coefficients of the cyclic
sieving polynomial follow from the rewriting
\[
  \frac 1 {[n+1]_q}  \qbinom{2n+b}{b,n,n}_q
=
\begin{bmatrix} 2n+b\\b\end{bmatrix}_q
\frac {1} {[n+1]_q}\begin{bmatrix} 2n\\n\end{bmatrix}_q
\]
and the well-known facts that $q$-binomial coefficients and
$q$-Catalan numbers are polynomials in~$q$ with non-negative
coefficients; see \cite[Thm.~3.1]{AndrAF} and
the first paragraph of the proof of Theorem~\ref{thm:ord}.

Let $\om$ be a primitive $m$-th root of unity, with $m=2n+b$.
We have to prove 
\beq\label{q-BT(b,i)}
\Fix_{R^e}(\BT(b,n))=
  \frac 1 {[n+1]_q}  \qbinom{2n+b}{b,n,n}_q
\Bigg\vert_{q=\om^ e}
\end{equation}
for all integers $e$. It suffices to prove this for values of $e\in \llbracket 0, m-1\rrbracket$ equal to $0$ or dividing $m$. Lemma~\ref{lem:13} below gives the corresponding values of the right-hand side of~\eqref{q-BT(b,i)}.

We begin with $e=0$. To construct a b-tree of $\BT(b,n)$, we start from a rooted tree with $n+1$ nodes (counted by the Catalan number $C_{n}$) and then spread $b$ buds around this tree. When walking around the b-tree, starting from the root corner, one sees a sequence of $2n$~edge-sides and $b$~buds. This sequence is counted by the binomial coefficient $\binom{2n+b}{b}$. Hence
\beq\label{BT(b,i)}
|\BT(b,n)| =\binom{2n+b}{b} \frac 1{n+1} \binom{2n}{n}
= \frac 1{n+1} \binom{2n+b}{b,n,n},
\eeq
which is indeed the value at $q=1$ of~\eqref{q-BT(b,i)}.

\medskip
Now let $e$ be a positive divisor of $m=2n+b$, and let $d=m/e$. Let $\tau^+$ be  a tree  of $\BT(b,n)$, and let $\tau$ be the rooted tree obtained by removing all buds of $\tau^+$.  Then $\tau$ has~$n$ edges. By adapting the proof of Proposition~\ref{prop:RRR}, we obtain that
\[
  R^e(\tau^+)= \tau^+ \quad \text{implies}\quad  R^f(\tau)=\tau,
\]
with $f=2n/d$. In particular, there can exist b-trees fixed by~$R^e$ only if $e=n+\frac b2$ with $n$ odd and $b$ even (that is,
in particular, $f=n$ and $d=2$), or if $m/e$ divides~$n$ (that is, $f$ is even).

\medskip
So let us first take $e=n+\frac b2$, with $b$ and $n+1$ even, and let $\tau^+\in \BT(b,n)$ be fixed by~$R^e$.  The associated tree $\tau$ is fixed by~$R^n$, and thus has a central edge~$c$ by Proposition~\ref{prop:structure}. We now proceed as in Proposition~\ref{prop:structure}.  We return to $\tau^+$, cut  the edge $c$ in the middle, retain the component containing the root, and mark the bud that comes from the edge $c$; this gives a tree $\tau_1^+$ in $\BT\left(1+\frac b2, \frac {n-1}2\right)$, with a marked bud, and the correspondence  $\tau^+ \mapsto \tau_1^+$ is one-to-one. Hence in this case,
\[
  \Fix_{R^e}(\BT(b,n))=\left(1+\frac b 2\right)\left|\BT\left(1+\frac b 2, \frac {n-1} 2 \right)\right|,
\]
which, according to~\eqref{BT(b,i)}, is equivalent to the expression in the second alternative of~\eqref{eq:17}. This proves~\eqref{q-BT(b,i)} in this case.

\medskip Now assume that $d=m/e$ divides~$n$, and let $f=2n/d$.   Let $\tau^+\in \BT(b,n)$ be fixed by~$R^e$.  The associated tree~$\tau$ is fixed by~$R^f$, where $f$ is even, and thus has a central node~$c$ by Proposition~\ref{prop:structure}. We now proceed as in Proposition~\ref{prop:structure}.   Let $\ell$ be
the degree of~$c$ in~$\tau^+$. As in Figure~\ref{fig:Psidef}, we denote
the b-trees attached at the centre by $\tau_1, \ldots, \tau_\ell$
--- some of them may consist of a single bud. Our convention here is that $\tau_1$ contains the edge \emm or the bud, that follows the root corner, in counterclockwise order. Then  $d:=m/e$ divides~$\ell$, and the trees
$\tau_a$ and $\tau_{a+\ell/d}$
are copies of one another. The map~$\Psi_d$, now extended to trees with buds, then sends
trees of  $\BT(b,n)$ fixed by~$R^e$ bijectively 
to trees of $\BT(b/d, n/d)$ having a marked node. Hence
\[
  \Fix_{R^e}(\BT(b,n))= \left( 1+ \frac {n} d\right)  \left|\BT\left(\frac b d, \frac {n} d\right)\right|
  ,
\]
which,  according to~\eqref{BT(b,i)}, is equivalent to the expression in the third alternative of~\eqref{eq:17}. (Note that the right-hand side is zero if $b$ is not a multiple of~$d$.) This proves~\eqref{q-BT(b,i)} in this case, and concludes the proof of the proposition. 
\end{proof}

The following lemma extends Lemma~\ref{lem:1}, which corresponds to the case $b=0$.

\begin{lemma} \label{lem:13}
Let $n\ge 1$, $b\ge 0$, and write $m=2n+b$. Furthermore,
let~$\om$ be a primitive $m$-th root of unity, and let $e$ be a
non-negative integer
$<m$ equal to~$0$ or dividing~$m$. In the latter case, write $d:=m/e$.
Then we have
\begin{equation} \label{eq:17} 
  \frac 1 {[n+1]_q}  \qbinom{2n+b}{b,n,n}_q
\Bigg\vert_{q=\om^e}
=
\begin{cases}
  \frac 1{n+1} \binom{2n+b}{b,n,n},
  &\text{if }e=0,\\[.5em]
\left(  \begin{smallmatrix}
    {n+\frac b 2}\\{\frac b 2,\frac {n-1}2,\frac{n+1}2}
  \end{smallmatrix}
  \right),
&\text{if } d=2 \text{ with  $n+1$ and $b$ even},\\[.5em]
\left(\begin{smallmatrix}
  {e}\\
  {\frac b d, \frac n d, \frac n d}
\end{smallmatrix}\right),
&\text{if\/ $d$ divides $n$ and $b$},\\
0,&\text{otherwise.}
\end{cases}
\end{equation}
\end{lemma}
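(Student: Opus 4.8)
The plan is to evaluate the specialization of the $q$-multinomial expression at a primitive $d$-th root of unity $\rho=\om^e$ by the same divisibility-counting technique used repeatedly in the earlier lemmas (Lemmas~\ref{lem:1}, \ref{lem:8}, \ref{lem:3}, \ref{lem:9}), relying on the two basic facts~\eqref{qint0} and~\eqref{eq:SM}. First I would dispose of the trivial case $e=0$, where the left-hand side is just the value at $q=1$, namely~\eqref{BT(b,i)}. For the remaining cases, write $\rho=\om^e$, a primitive $d$-th root of unity with $d=m/e=(2n+b)/e\ge 2$. The key is to determine the multiplicity of $\rho$ as a root of the cyclic sieving polynomial, i.e.\ the exponent of $\Phi_d(q)$ in its cyclotomic factorization. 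Using the factorization
\[
  \frac 1 {[n+1]_q}  \qbinom{2n+b}{b,n,n}_q
  =\frac{[2n+b]_q!}{[n+1]_q\,[b]_q!\,([n]_q!)^2},
\]
this exponent is
\[
  g_d=\fl{\frac{2n+b}{d}}-\chi(d\mid n+1)-\fl{\frac b d}-2\fl{\frac n d}.
\]

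Next I would show that $g_d=0$ precisely in the two non-trivial cases listed, and $g_d>0$ otherwise. Since $d\mid 2n+b$, the number $2N+\frac bd-\{(n+1)/d\}$ (with $N:=\{n/d\}$, and using $\{b/d\}=\{(2n+b)/d\}-\{2n/d\}$ appropriately) is constrained. The cleanest route is to set $N=\{n/d\}$, $B=\{b/d\}$, observe that $d\mid(2n+b)$ forces $2N+B\in\{0,1\}$ (an integer because $\{(2n+b)/d\}=0$, and it lies in $[0,2-\tfrac2d+\tfrac{d-1}{d}]$, hence equals $0$ or $1$). The case $2N+B=0$ gives $d\mid n$ and $d\mid b$; one then checks directly that $g_d=0$, yielding the third alternative. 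The case $2N+B=1$ splits according to parity: either $B=1,N=0$ forces $d=2$ (since $B=\{b/d\}=1$ is impossible for $d>2$ as $B$ is a multiple of $1/d$ less than $1$ — actually $B$ can be any $r/d$, so one argues instead via $d\mid(2n+b)$ and $B+2N=1$), leading to $d=2$ with $n$ odd and $b$ even, the second alternative; or the configuration gives $g_d>0$. I would verify by a short parity/floor computation that in all configurations with $2N+B=1$ other than the clean $d=2$ case, the exponent $g_d$ is strictly positive, so the specialization vanishes (fourth alternative).

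Finally, in the two surviving cases I would evaluate the limit $\lim_{q\to\rho}$ by the grouping trick used in~\eqref{grouped-2} and~\eqref{grouped-d}: pair each $q$-integer $[\alpha]_q$ in the numerator with one $[\beta]_q$ in the denominator satisfying $\alpha\equiv\beta\pmod d$, and apply~\eqref{eq:SM} factor by factor. For $d=2$, $\rho=-1$ and the ratios collapse to the ordinary multinomial $\binom{n+b/2}{b/2,(n-1)/2,(n+1)/2}$; for general $d\mid n$ with $d\mid b$, the surviving integer ratios assemble into $\binom{e}{b/d,\,n/d,\,n/d}$, matching the stated values. The main obstacle I anticipate is the case analysis of when $g_d=0$: because three separate floor terms interact (the $\chi(d\mid n+1)$ term, the bud term $\fl{b/d}$, and the doubled $\fl{n/d}$), one must be careful that the congruence $d\mid(2n+b)$ alone does not separate the even/odd behavior, and the borderline $d=2$ configuration must be isolated cleanly from the generic $d\mid n$ one. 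Everything else is a routine adaptation of the pattern established in the preceding lemmas, so I would keep the exposition brief, referring back to~\eqref{eq:SM} and the grouping of~\eqref{grouped-d} rather than recomputing.
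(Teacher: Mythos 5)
Your overall route is the same as the paper's: factor out the multiplicity of $\Phi_d$ in $\frac{[2n+b]_q!}{[n+1]_q[b]_q!([n]_q!)^2}$, reduce the non-vanishing condition to a statement about the integer $2N+B$ with $N=\{n/d\}$, $B=\{b/d\}$, and then evaluate the two surviving cases by pairing $q$-integers congruent mod $d$ and applying~\eqref{eq:SM}. Your exponent $g_d=\fl{\frac{2n+b}d}-\chi(d\mid n+1)-\fl{\frac bd}-2\fl{\frac nd}$ agrees with the paper's count (since $\chi(d\mid n+1)+\fl{n/d}=\fl{(n+1)/d}$), and the final case distinctions and values are correct.

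There is, however, one step that fails as written. You assert that $d\mid(2n+b)$ forces $2N+B\in\{0,1\}$, justified by the bound $2N+B\le 2-\frac2d+\frac{d-1}d=3-\frac3d$. For $d\ge 3$ this bound is $\ge 2$, and the value $2N+B=2$ genuinely occurs: take $d=3$, $n\equiv b\equiv 2\pmod 3$ (e.g.\ $n=b=2$, $e=2$). The correct statement, which is what the paper uses, is that $2N+B$ is an integer and that \emph{non-vanishing} requires $2N+B=\fl{N+\frac1d}\in\{0,1\}$; the configuration $2N+B=2$ must be explicitly dispatched into the fourth alternative via $g_d=2-\fl{N+\frac1d}\ge 1>0$. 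Relatedly, your sub-case ``$B=1$, $N=0$'' inside the analysis of $2N+B=1$ is vacuous ($B<1$ by definition of a fractional part), and the ensuing discussion is muddled; the clean argument is that $g_d=0$ with $2N+B=1$ forces $\fl{N+\frac1d}=1$, i.e.\ $N=1-\frac1d$, whence $B=\frac2d-1\ge 0$ forces $d=2$, $b$ even and $n$ odd. With these two repairs your proof goes through and coincides with the paper's.
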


\begin{proof}
The case $e=0$ being obvious, we assume that
$e$ is a positive divisor of $m=2n+b$ with $e<m$.
According to~\eqref{qint0}, in order to obtain a
non-zero value of the specialisation on the left-hand side
of~\eqref{eq:17}, there must be  an equal number of factors of the form
$1-q^\al$ with $d \mid \al$ in the numerator and in the
denominator, with $d\ge2$. In other words, we must have
\begin{equation*} 
\fl{\frac{2n+b}d}
-\fl{\frac b d}-\fl{\frac {n} d}-\fl{\frac {n+1} d}=0.
\end{equation*}
Introducing the fractional parts  $N=\{\frac {n} {d}\}$
and $B=\{\frac {b} d\}$,  this can be rewritten as
\begin{equation*}
2N+B= \fl{N+\frac 1d}.
\end{equation*}
Here, the left-hand side does not need taking an integer
part since, by assumption, we have $d\mid (2n+b)$.
Clearly, the only possible values of the right-hand side in the above equation
are $0$ or~$1$.
In the former case, we infer $N=B=0$, that is,
we have that $d$ divides~$n$ and~$b$, and we are in the third case of~\eqref{eq:17}. If $2N+B=1= \fl{N+\frac 1d}$, we have $N=1-\frac 1d$, so that $B=\frac 2d-1$, which forces $d=2$, with $b$ and $n+1$ even. We are then in the second case of~\eqref{eq:17}. Consequently, in the  fourth case of~\eqref{eq:17}, the value is $0$.

It is then an easy task to evaluate the left-hand side of~\eqref{eq:17} in the second and third case, using the simple fact~\eqref{eq:SM}.
\end{proof}

We can now prove the cyclic sieving phenomenon for tree-rooted maps with $i+1$ vertices and $j+1$ faces.

\begin{proof}[Proof of Theorem~\ref{thm:TMij}.]
  We first note that the expression of the theorem can be written as a product:
  \[ 
    \frac 1{[i+1]_q [j+1]_q} \qbinom{2i+2j}{i,i,j,j}_q
                    =      \qbinom{2i+2j}{2i,2j}_q
                    \times       \frac 1 {[i+1]_q} \qbinom{2i}{i}_q
                    \times       \frac 1 {[j+1]_q} \qbinom{2j}{j}_q.
 \] 
 This is a polynomial with non-negative coefficients because this is the case for each of the three factors.

  Let $n=i+j$ be the number of edges in a
  tree-rooted map of~$\TM(i,j)$.  We have to prove that
  \[
  \Fix_{R^e}(\TM(i,j))=
   \frac 1 {[i+1]_q[j+1]_q} \qbinom{2i+2j}{i,i,j,j}_q
\Bigg\vert_{q=\om^ e}
\]
for all integers $e$. It suffices to prove this for values of $e\in \llbracket 0, 2n-1\rrbracket$ equal to $0$ or dividing $2n$.
  
  For $e=0$, all elements of $\TM(i,j)$ are fixed by~$R^e$. As recalled in Subsection~\ref{sec:defs-maps}, such tree-rooted maps are obtained by combining a non-crossing perfect matching of~$2j$~points, counted by the Catalan number $C_j$, with a b-tree having $i+1$ vertices and $b=2j$ buds; see Figure~\ref{fig:tree-rooted}. According to~\eqref{BT(b,i)}, this gives  
  \[
    |\TM(i,j)|=  \frac 1 {j+1} \binom{2j}{j} \cdot
    \frac 1 {i+1} \binom{2i+2j}{i,i,2j}
    =
    \frac 1{(i+1)(j+1)} \binom{2i+2j}{i,i,j,j},
  \]
  as desired.

  Now let $e\in \llbracket 1 , 2n-1\rrbracket$ be  a divisor of~$2n$, and write $d:=2n/e$. Consider a tree-rooted map $(\m, \tau)$ in $\TM(i,j)$
  that is fixed by~$R^e$, and let $\tau^+$ be the associated b-tree.  Since~$\tau^+$ is simply obtained by opening the edges of~$\m$ that are not in $\tau$, then $\tau^+$ itself is fixed by~$R^e$. Conversely, take a b-tree~$\tau^+$ with $i+1$~vertices and $b=2j$ buds,
  invariant by $R^e$,
  and match its buds with a non-crossing perfect matching $\mathfrak p$; then the resulting tree-rooted map is invariant by $R^e$ if and only if $\mathfrak p$ is invariant by a rotation of $(2\pi)/d$. Hence, using the notation of Corollary~\ref{cor:matchings},
  \begin{align}
     \Fix_{R^e}(\TM(i,j)) &=   \Fix_{R^e}(\BT(2j, i))\cdot \Fix_{{2\pi/d}}(\NCM(j)) \nonumber\\
                          & =
\frac 1{[i+1]_q} \qbinom{2i+2j}{i,i,2j}_q
                            \Bigg\vert_{q=\e^{2 \pi i/d} }\,
                      \times       \frac 1 {[j+1]_q} \qbinom{2j}{j}_q
     \Bigg\vert_{q=\e^{2 \pi i/d} }  \label{CSP-TMij0} \\
                          & =
                            \frac 1{[i+1]_q [j+1]_q} \qbinom{2i+2j}{i,i,j,j}_q
                            \Bigg\vert_{q=\e^{2 \pi i/d} },
                            \label{CSP-TMij}
  \end{align}
where we have used
Proposition~\ref{prop:BTij} (with $b$ replaced by $2j$ and $n$ replaced by $i$) and Corollary~\ref{cor:matchings} (with $n$ replaced by $j$)
to obtain the second line.
By Lemma~\ref{lem:divisors},   this concludes the proof.
\end{proof}

\subsection{Tree-rooted maps with prescribed number of edges}

We now derive
a cyclic sieving result for tree-rooted maps with $n$ edges
from the previous theorem on $\TM(i,j)$,
using simply summation, in the spirit of~\cite{Alexandersson}. We denote
the set of tree-rooted maps with $n$ edges by $\TM(n)$.

\begin{theorem} \label{thm:TMn}
Let $n$ be a non-negative integer.
Then the cyclic group of order~$2n$ acts on~$\TM(n)$ by the rotation~$R$, and the triple
\[\left(\TM(n),\langle R\rangle,
  \frac 1{[n+1]_q[n+2]_q} \qbinom{2n}{n}_q \qbinom{2n+2}{n+1}_q\right)
\]
exhibits the cyclic sieving phenomenon.
\end{theorem}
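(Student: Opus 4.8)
The plan is to derive this statement from Theorem~\ref{thm:TMij} by summing over the possible splittings of the $n$ edges into tree edges and non-tree edges, exactly as one derives $\T(n)$ from $\T(n,k)$ (Section~\ref{sec:matchings}), and ``in the spirit of~\cite{Alexandersson}''. First I would observe that the rotation $R$ changes neither the underlying map nor its distinguished spanning tree, so it preserves the number of vertices and of faces; hence the decomposition $\TM(n)=\bigsqcup_{i+j=n}\TM(i,j)$ is $R$-equivariant and
\[
  \Fix_{R^e}(\TM(n))=\sum_{i+j=n}\Fix_{R^e}(\TM(i,j)).
\]
Since all the cyclic groups acting on the sets $\TM(i,j)$ with $i+j=n$ have the same order $2n$, I may fix a primitive $2n$-th root of unity $\om$ and, by Theorem~\ref{thm:TMij} in its factored form~\eqref{CSP-TMij}, write each summand as $\frac{1}{[i+1]_q[j+1]_q}\qbinom{2n}{i,i,j,j}_q$ evaluated at $q=\om^e$. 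By Lemma~\ref{lem:divisors} it then suffices to treat $e=0$ and $e$ a divisor of $2n$.

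The polynomiality and non-negativity of the cyclic sieving polynomial $P(q)=\frac{1}{[n+1]_q[n+2]_q}\qbinom{2n}{n}_q\qbinom{2n+2}{n+1}_q$ would be immediate, since it is the product of the two $q$-Catalan polynomials $\frac{1}{[n+1]_q}\qbinom{2n}{n}_q$ and $\frac{1}{[n+2]_q}\qbinom{2n+2}{n+1}_q$, each of which has non-negative integer coefficients (first paragraph of the proof of Theorem~\ref{thm:ord}). The case $e=0$ recovers Mullin's enumeration $|\TM(n)|=C_nC_{n+1}$.

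The heart of the argument is a summation identity. Writing $C_m(q):=\frac{1}{[m+1]_q}\qbinom{2m}{m}_q$ for the $q$-Catalan polynomial and noting the factorisation $\frac{1}{[i+1]_q[j+1]_q}\qbinom{2n}{i,i,j,j}_q=\qbinom{2n}{2i}_q\,C_i(q)\,C_j(q)$, I would establish the \emph{weighted} polynomial identity
\[
  \sum_{i+j=n} q^{\,j(j+1)}\,\qbinom{2n}{2i}_q\,C_i(q)\,C_j(q)=C_n(q)\,C_{n+1}(q)
\]
as a direct consequence of the $q$-Vandermonde convolution $\qbinom{2n+2}{n+1}_q=\sum_i q^{(n-i)(n+1-i)}\qbinom{n}{i}_q\qbinom{n+2}{i+1}_q$, after cancelling $q$-factorials and using $\qbinom{n+2}{n+1-i}_q=\qbinom{n+2}{i+1}_q$. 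The subtle point — and the main obstacle — is that the \emph{unweighted} sum $\sum_{i+j=n}\qbinom{2n}{2i}_q C_i(q)C_j(q)$, which is what actually computes $\Fix_{R^e}(\TM(n))$, is \emph{not} equal to $C_n(q)C_{n+1}(q)$ as a polynomial (already for $n=1$ the sum is $2$ while the product is $1+q^2$); the weight $q^{j(j+1)}$ cannot simply be dropped.

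This obstacle is resolved by showing that at the relevant evaluation points the weight is harmless. At $q=\om^e$, a primitive $d$-th root of unity with $d=2n/e$, I claim $q^{\,j(j+1)}=1$ on every term that does not already vanish. Indeed, through the product structure~\eqref{CSP-TMij0}, a term vanishes at $\om^e$ unless its matching factor $\Fix_{2\pi/d}(\NCM(j))=C_j(\om^e)$ is nonzero, which by Corollary~\ref{cor:matchings} forces either $d\mid j$ or $d=2$ with $j$ odd; in both cases $d\mid j(j+1)$, so indeed $q^{\,j(j+1)}=1$ at $\om^e$. Consequently the unweighted and weighted sums agree at $q=\om^e$, and combining this with the displayed identity yields $\Fix_{R^e}(\TM(n))=C_n(\om^e)\,C_{n+1}(\om^e)=P(\om^e)$ for $e=0$ and every divisor $e$ of $2n$, which by Lemma~\ref{lem:divisors} completes the proof of the cyclic sieving phenomenon.
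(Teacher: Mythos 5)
Your proposal is correct and follows essentially the same route as the paper: decompose $\TM(n)=\bigsqcup_{i+j=n}\TM(i,j)$, insert the weight $q^{j(j+1)}=q^{(n+1-i)j}$ (justified by checking it equals $1$ at $\om^e$ on every term where the CS polynomial of $\TM(i,j)$ does not already vanish, via Corollary~\ref{cor:matchings}), and sum by $q$-Chu--Vandermonde to obtain $\frac{[2n]_q!}{[n+1]_q!^2}\qbinom{2n+2}{n}_q$, which equals the stated polynomial. The only cosmetic differences are your factorisation of the multinomial as $\qbinom{2n}{2i}_qC_i(q)C_j(q)$ and your observation that non-negativity follows from $P$ being a product of two $q$-Catalan polynomials, where the paper instead notes that $P$ is a sum of CS polynomials.
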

The case $q=1$ is due to Mullin~\cite[Eq.~(4.7)]{mullin-boisees}.

\begin{example}
For $n=2$, as in Figure~\ref{fig:rotation-tree-rooted}, we obtain
the polynomial
\[
  P(q)=\left(q^{2}+1\right) \left(q^{4}+q^{3}+q^{2}+q +1\right) \left(q^{2}-q +1\right)= q^{8}+2 q^{6}+q^{5}+2 q^{4}+q^{3}+2 q^{2}+1,
\]
and we can check, by evaluating it at $1, i$ and $ -1$, that cyclic sieving indeed holds.
\end{example}

\begin{remark}
  Below we prove this theorem by summation of (a slight modification of) the cyclic sieving polynomials of Theorem~\ref{thm:TMij}. In the same fashion, we could have derived our first theorem, Theorem~\ref{thm:ord}, from its refinement, Theorem~\ref{thm:ord_leaves}, but we gave instead a direct, sum-free proof of Theorem~\ref{thm:ord}. In contrast, we have no direct proof of the above theorem.
Bernardi~\cite{bernardi-boisees} gave a direct combinatorial proof of the case $q=1$, by constructing  a bijection between tree-rooted maps with $n$ edges and pairs consisting of a tree with $n$ edges and a non-crossing partition of $n+1$ elements.
However, this bijection does not seem to behave nicely when applying the rotation~$R$ to tree-rooted maps.
\end{remark}

\begin{proof}[Proof of Theorem~\ref{thm:TMn}]
  A tree-rooted map with $n$ edges has $i+1$ vertices and $j+1$ edges, for some $i$ and~$j$ that sum to $n$.   Theorem~\ref{thm:TMij} gives a cyclic sieving polynomial for these tree-rooted maps under the rotation~$R$.
Recall  that we have constructed it  as the product of  the cyclic sieving polynomials obtained for b-trees of $\BT(2j,i)$ and for non-crossing perfect matchings of $2j$ elements; cf.~\eqref{CSP-TMij0}
and~\eqref{CSP-TMij}, and the paragraph above these lines.

Now let $e\in\llbracket 0, 2n-1\rrbracket$ be either $0$ or a divisor of~$2n$. Let $d=2n/e$. Then $d\in \{2, 3, \ldots, 2n\} \cup \{\infty\}$. For $\om$ a primitive $2n$-th root of unity, $\om^e$ is a primitive $d$-th root of unity (or $1$, if $e=0$).  By Lemma~\ref{lem:13} (with $b$ replaced by $2j$ and  $n$ replaced by $i$) and~Corollary~\ref{cor:matchings} (with $n$ replaced by $j$), the cyclic sieving polynomial
in the statement of Theorem~\ref{thm:TMij} vanishes at $\om^e$ unless
  \begin{itemize} 
  \item   $e=0$,
  \item or $d=2$,
    \item or $d$ divides both $i$ and $j$.
    \end{itemize}
    This implies that
     \[
                           \frac 1{[i+1]_q [j+1]_q} \qbinom{2i+2j}{i,i,j,j}_q       \cdot  q^{(n+1-i)j}
     \]
     is another cyclic sieving polynomial for $\TM(i,j)$, since the power of~$q$ that we have multiplied with takes the value $1$ at $\om^e$ in all the above cases. (Note that $(n+1-i)j=(j+1)j$ is always even.)

     Hence a cyclic sieving polynomial for $\TM(n)$ is
     \begin{align*}
       \sum_{i+j=n}                           \frac 1{[i+1]_q [j+1]_q} \qbinom{2i+2j}{i,i,j,j}_q
       q^{(n+1-i)j}
       &=  \frac{[2n]_q!}{[n+1]_q!^2}\sum_{i+j=n} \qbinom{n+1}{i} \qbinom{n+1}{j} q^{(n+1-i)j}\\
       &= \frac{[2n]_q!}{[n+1]_q!^2} \qbinom{2n+2}{n},
     \end{align*}
     by the $q$-analogue of the Chu--Vandermonde identity~\cite[Eq.~(3.3.10)]{AndrAF}. This gives the polynomial announced in Theorem~\ref{thm:TMn}. Since it is obtained as a sum of cyclic sieving polynomials, its coefficients are clearly non-negative.
     \end{proof}

     \subsection{Tree-rooted maps with prescribed vertex degrees}

     Let $\bn=(n_1, n_2, \ldots)$ be a sequence of non-negative numbers containing finitely many non-zero entries. Let $\TM(j, \bn)$ be the set of tree-rooted maps having $n_k$ vertices of degree~$k$ for all~$k$, and $j+1$~faces. The b-trees associated with such maps have  $n_k$~vertices of degree~$k$ for all~$k$, and $b=2j$ buds. Adding a node of degree~$1$ at the end of each bud gives a tree with $b+n_1$~leaves  and $n_k$~nodes of degree~$k$ for $k\ge 2$. By~\eqref{tree-cond}, such a tree can only exist if
     \beq\label{tree-cond-bud}
     -b+ \sum_{k\ge 1} (k-2) n_k =-2.
     \eeq
     Assuming this holds (with $b=2j$), a tree-rooted map of $\TM(j, \bn)$ has $i+1$ vertices and~$n$ edges, with
     \[
       i+1:=\sum_k n_k \qquad \text{and} \qquad n:=i+j=j-1+\sum_k n_k.
     \]
     
     The following theorem extends Theorem~\ref{thm:ord_deg}, which corresponds to the case $j=0$.

\begin{theorem} \label{thm:TMd}
Let $j\ge 0$ and $\bn=(n_1, n_2, \ldots)$ satisfy~\eqref{tree-cond-bud} with $b=2j$. Define $n$ as above. The cyclic group of order~$2n$ acts on $\TM(j, \bn)$ by the rotation~$R$, and the triple
\[
  \left(\TM(j,\bn),\langle R\rangle,
\frac {[2n]_q} {[j+1]_q[n+j+1]_q\,[n+j]_q}
\begin{bmatrix}
    n+j+1\\j,j,n_1,n_2,\dots\end{bmatrix}_q\right)
\]
exhibits the cyclic sieving phenomenon.
\end{theorem}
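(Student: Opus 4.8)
The plan is to reduce the statement to two cyclic sieving phenomena that we already control, exactly as in the proof of Theorem~\ref{thm:TMij}, but now refined so as to keep track of the vertex degrees. Recall from Section~\ref{sec:defs-maps} that a tree-rooted map of $\TM(j,\bn)$ is encoded bijectively by a pair $(\tau^+,\mathfrak p)$, where $\tau^+$ is a rooted b-tree with $b=2j$ buds whose vertices have degree distribution~$\bn$ (buds being counted in the degrees), and $\mathfrak p$ is a non-crossing perfect matching of its $2j$ buds. Writing $i+1=\sum_k n_k$ and $n=i+j$, such a b-tree has $2i+b=2n$ corners, so that $R$ acts on it with order $2n$, in accordance with the order of $R$ on $\TM(j,\bn)$. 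As in the proof of Theorem~\ref{thm:TMij}, a map is fixed by $R^e$ if and only if $\tau^+$ is fixed by $R^e$ and $\mathfrak p$ is invariant under the rotation by $2\pi/d$, where $d=2n/e$; hence, for $e$ a divisor of $2n$,
\[
\Fix_{R^e}(\TM(j,\bn))=\Fix_{R^e}(\BT(2j,\bn))\cdot\Fix_{2\pi/d}(\NCM(j)),
\]
where $\BT(2j,\bn)$ denotes the set of such b-trees.

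The key new ingredient is a cyclic sieving phenomenon for $\BT(2j,\bn)$, extending both Proposition~\ref{prop:BTij} (the case without degree constraints) and Theorem~\ref{thm:ord_deg} (the case $j=0$). I would establish it for the triple
\[
\left(\BT(2j,\bn),\langle R\rangle,\frac{[2n]_q}{[n+j]_q\,[n+j+1]_q}\begin{bmatrix}n+j+1\\2j,n_1,n_2,\dots\end{bmatrix}_q\right),
\]
following the now-familiar scheme. For the unrestricted count I would cap each bud with a new leaf: a b-tree of $\BT(2j,\bn)$ is then the same datum as a rooted plane tree with degree distribution $(n_1+2j,n_2,n_3,\dots)$ carrying $2j$ distinguished ``cap'' leaves, with the root corner forbidden at a cap. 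Counting these by~\eqref{eq:T(n,d)} and subtracting, via~\eqref{eq:Tl(n,d)}, the configurations rooted at a cap yields
\[
|\BT(2j,\bn)|=\frac{2n\,(n+j-1)!}{(2j)!\,\prod_k n_k!},
\]
which is the value at $q=1$ of the proposed polynomial. For $e$ a positive divisor of $2n$, I would transport the structural analysis of Proposition~\ref{prop:structure} to b-trees exactly as in the proof of Proposition~\ref{prop:BTij}: the transformations $\Phi$ and $\Psi_d$, applied to the underlying tree but carrying the buds along, send the b-trees of $\BT(2j,\bn)$ fixed by $R^e$ bijectively to smaller b-trees (with a marked bud in the central-edge case $d=2$, which forces every $n_k$ to be even, or with a marked node in the central-node case $d\mid i$), the degree bookkeeping being precisely the one already carried out in the proof of Theorem~\ref{thm:ord_deg}. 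The resulting fixed-point counts are matched with the specialisations at $\om^e$ through the elementary identity~\eqref{eq:SM}, by grouping $q$-integers whose indices differ by a multiple of~$d$; the non-zero cases are governed by the same congruence conditions on $\bn$ (some $n_\ell\equiv1\pmod d$ while $d\mid n_i$ for $i\ne\ell$) as in Lemma~\ref{lem:9}. Polynomiality and non-negativity follow as in Lemma~\ref{lem:10}, by writing the polynomial as $\prod_{d\ge2}\Phi_d(q)^{g_d}$ with non-negative exponents and then invoking reciprocity and unimodality of $q$-multinomials.

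With this b-tree proposition in hand, the theorem follows by the argument of Theorem~\ref{thm:TMij}. Combining the displayed product with the b-tree polynomial above and the matching polynomial $\frac{1}{[j+1]_q}\begin{bmatrix}2j\\j\end{bmatrix}_q$ of Corollary~\ref{cor:matchings} (with $n$ replaced by~$j$), I would verify the algebraic identity
\[
\frac{[2n]_q}{[n+j]_q[n+j+1]_q}\begin{bmatrix}n+j+1\\2j,n_1,n_2,\dots\end{bmatrix}_q\cdot\frac{1}{[j+1]_q}\begin{bmatrix}2j\\j\end{bmatrix}_q=\frac{[2n]_q}{[j+1]_q[n+j+1]_q[n+j]_q}\begin{bmatrix}n+j+1\\j,j,n_1,n_2,\dots\end{bmatrix}_q,
\]
so that the product of the two cyclic sieving polynomials is exactly the polynomial of the theorem. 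Evaluating each factor at $\om^e$ then reproduces $\Fix_{R^e}(\TM(j,\bn))$ through the displayed product formula, and Lemma~\ref{lem:divisors} concludes. Non-negativity of the coefficients of the theorem's polynomial is automatic, since it is a product of two polynomials with non-negative coefficients.

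I expect the main obstacle to lie entirely in the b-tree proposition. Two points require genuine care. First, the enumeration of $\BT(2j,\bn)$ itself: the naive capping count overcounts, and one must remember to forbid the root corner from sitting at a cap leaf (equivalently, that a b-tree has only $2i+b=2n$ corners, not $2(i+j+j)$); this correction is exactly what turns $\frac{2(n+j)!}{(2j)!\prod_k n_k!}$ into the correct $\frac{2n\,(n+j-1)!}{(2j)!\prod_k n_k!}$. Second, in the central-node case one must track how the $2j$ buds and the distinguished degree class $\ell$ (with $n_\ell\equiv1\pmod d$) distribute among the $d$ rotated copies produced by $\Psi_d$, checking that the marked-node contribution collapses to the single clean $q$-multinomial above, just as in Theorem~\ref{thm:ord_deg}. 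Once these bookkeeping points are settled, every remaining step is a routine instance of the pattern used throughout Sections~\ref{sec:ordinary}--\ref{sec:int_deg} and~\ref{sec:maps}.
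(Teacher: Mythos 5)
Your proposal is correct and follows essentially the same route as the paper: it decouples a tree-rooted map into a degree-constrained b-tree and a non-crossing matching of the buds, proves a cyclic sieving phenomenon for $\BT(2j,\bn)$ with exactly the polynomial $\frac{[2n]_q}{[n+j]_q[n+j+1]_q}\bigl[\begin{smallmatrix}n+j+1\\2j,n_1,n_2,\dots\end{smallmatrix}\bigr]_q$ that the paper establishes in Proposition~\ref{thm:BTd}, and recombines via the $q$-multinomial identity. The only (immaterial) divergence is that you enumerate $\BT(2j,\bn)$ by capping buds and subtracting cap-rooted configurations, where the paper uses a double-rooting argument; both yield $2n\,(n+j-1)!/\bigl((2j)!\prod_k n_k!\bigr)$.
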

\begin{example}
  Let us take $j=1$, $n_1=n_3=1$, the other $n_k$'s being zero. Then $i=1$, $n=2$, and we obtain the polynomial $P(q)=[4]_q=1+q+q^2+q^3$. We can check that cyclic sieving holds, using the second orbit in Figure~\ref{fig:rotation-tree-rooted}.
\end{example}

Again, this theorem decouples into the cyclic sieving phenomenon for non-crossing matchings of $2j$~points stated
in Corollary~\ref{cor:matchings} (which we apply with $n$ replaced by $j$) and the following cyclic sieving result dealing with b-trees with fixed vertex degrees (which we apply with
 $n$ replaced by $i$ and
$b$ replaced by $2j$). The proof of Theorem~\ref{thm:TMd} then mimics that of Theorem~\ref{thm:TMij}, and we do not repeat the argument.

\begin{proposition} \label{thm:BTd} 
Let $b\ge 0$ and $\bn=(n_1, n_2, \ldots)$ satisfy~\eqref{tree-cond-bud}.
Let $\BT(b,\bn)$ denote the set of all b-trees with $b$~buds and $n_k$~nodes
of degree~$k$, for $k\ge1$. Furthermore, write $n+1:=\sum_i n_i$.
Then the cyclic group of order $m:=2n+b$ acts on $\BT(b,\bn)$ by the rotation~$R$, and the triple
\[
  \left(\BT(b,\bn),\langle R\rangle,
 \frac {[2n+b]_q} {[n+b+1]_q\,[n+b]_q}
\begin{bmatrix}
    n+b+1\\b,n_1,n_2,\dots\end{bmatrix}_q\right)
\]
exhibits the cyclic sieving phenomenon.
\end{proposition}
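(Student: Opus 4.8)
The plan is to prove Proposition~\ref{thm:BTd} by exactly the scheme used for Proposition~\ref{prop:BTij}, enriched with the degree bookkeeping of Theorem~\ref{thm:ord_deg}. The guiding picture is the \emph{completion} bijection: attaching a degree-$1$ node at the open end of each bud turns a b-tree of $\BT(b,\bn)$ into a plane tree $T$ with degree distribution $\tilde{\bn}:=(n_1+b,n_2,n_3,\dots)$ (so $T$ has $\tilde n=n+b$ edges), carrying $b$ marked leaves (the bud-leaves) and rooted at a corner that is \emph{not} incident to a marked leaf, since there is no corner at a bud tip. As in every section, the proof splits into three tasks — the cardinality at $e=0$, the number of b-trees fixed by $R^e$ for $e\mid m$, and the evaluation of the candidate polynomial at roots of unity — with polynomiality and non-negativity deferred to a separate lemma and the reduction to divisors of $m=2n+b$ provided by Lemma~\ref{lem:divisors}.

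For the fixed-point count I would first adapt Proposition~\ref{prop:RRR} through bud removal, exactly as in Proposition~\ref{prop:BTij}: if $\tau^+\in\BT(b,\bn)$ is fixed by $R^e$ and $\tau$ is the underlying tree obtained by deleting all buds, then $\tau$ is fixed by $R^f$ with $f=2n/d$, $d=m/e$. By Proposition~\ref{prop:structure} this forces $d=2$ with $n$ odd and $b$ even (then $f=n$), or $d\mid n$ (then $f$ even). In the first case the extended map $\Phi$ (cut the central edge, keep the root side, turn the freed half-edge into a marked bud) sends the fixed b-trees bijectively to $\BT(1+\tfrac b2,\bar{\bn})$ with a marked bud, where $\bar n_i=n_i/2$, giving $\Fix_{R^e}(\BT(b,\bn))=(1+\tfrac b2)\,|\BT(1+\tfrac b2,\bar{\bn})|$. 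In the second case the extended map $\Psi_d$ (retain $\tau_1,\dots,\tau_{\ell/d}$ around the central node and mark that node) sends them bijectively to $\BT(b/d,\bar{\bn})$ with a marked node of degree $\ell/d$, the degree shifts $\bar n_i=n_i/d$ for $i\notin\{\ell/d,\ell\}$, $\bar n_\ell=(n_\ell-1)/d$, $\bar n_{\ell/d}=n_{\ell/d}/d+1$ being identical to those in Theorem~\ref{thm:ord_deg}; this gives $\Fix_{R^e}(\BT(b,\bn))=\bar n_{\ell/d}\,|\BT(b/d,\bar{\bn})|$. Both cardinalities are read off from the $q=1$ formula \eqref{BT(b,i)}, refined to fixed degrees by the double-rooting identity $|\BT(b,\bn)|=|\T(\tilde{\bn})|\binom{n_1+b}{b}-|\T_l(\tilde{\bn})|\binom{n_1+b-1}{b-1}$ coming from the completion picture.

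The evaluation lemma would be the analog of Lemmas~\ref{lem:9} and~\ref{lem:13}. Over a primitive $d$-th root of unity $\om^e$, the vanishing is governed by \eqref{qint0}: the multiplicity of $\Phi_d$ equals $\chi(d\mid 2n+b)+\lfloor (n+b-1)/d\rfloor-\lfloor b/d\rfloor-\sum_i\lfloor n_i/d\rfloor$, and a fractional-part analysis (invoking \eqref{tree-cond-bud} exactly where Theorem~\ref{thm:ord_deg} used \eqref{tree-cond}) shows this is zero precisely when $d=2$ with all $n_i$ even, or when $d\mid n_\ell-1$ for some $\ell$ while $d\mid n_i$ for $i\ne\ell$. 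In those two cases the value is computed by grouping the surviving factors into ratios of $q$-integers differing by a multiple of $d$ and applying \eqref{eq:SM}; matching these values with the fixed-point counts above finishes the verification.

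The main obstacle is polynomiality and non-negativity of the candidate polynomial, whose prefactor $[2n+b]_q/([n+b+1]_q[n+b]_q)$ has two denominator factors that, unlike the situation in Lemma~\ref{lem:10}, do not cancel against $[2n+b]_q$; one checks that the naive ``non-negative power series times unimodal polynomial'' argument already fails. I would instead lift to $q$ the combinatorial partition of $\BT(b,\bn)$ according to whether the root sits at a non-leaf corner or at an unmarked ordinary leaf, namely $|\BT(b,\bn)|=|\T_i(\tilde{\bn})|\binom{n_1+b}{b}+|\T_l(\tilde{\bn})|\binom{n_1+b-1}{b}$. In the spirit of \eqref{eq:decomp1}--\eqref{eq:decomp2}, I expect a genuine polynomial identity
\[
\frac{[2n+b]_q}{[n+b+1]_q\,[n+b]_q}\begin{bmatrix}n+b+1\\b,n_1,n_2,\dots\end{bmatrix}_q
= q^{\,a}\begin{bmatrix}n_1+b\\b\end{bmatrix}_q P_i(\tilde{\bn})
+ q^{\,a'}\begin{bmatrix}n_1+b-1\\b\end{bmatrix}_q P_l(\tilde{\bn}),
\]
where $P_i(\tilde{\bn})$ and $P_l(\tilde{\bn})$ are the cyclic sieving polynomials for $\T_i(\tilde{\bn})$ (Theorem~\ref{thm:int_deg}) and for $\T_l(\tilde{\bn})$ (Theorem~\ref{thm:delta} with $\de=1$). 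Since both are already known to be polynomials with non-negative coefficients (Lemmas~\ref{lem:6} and~\ref{lem:11}) and the $q$-binomials manifestly so, establishing this identity and pinning down the exponents $a,a'$ — a routine but delicate $q$-series computation — would yield polynomiality and non-negativity at once, completing the proof.
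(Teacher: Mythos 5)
Your fixed-point enumeration is essentially the paper's argument: the same bud-removal reduction to Proposition~\ref{prop:RRR}, the same extended maps $\Phi$ and $\Psi_d$ landing in $\BT(1+\tfrac b2,\bn/2)$ with a marked bud, resp.\ $\BT(b/d,\bar\bn)$ with a marked node, and the same root-of-unity analysis via the exponent $1+\fl{(n+b-1)/d}-\fl{b/d}-\sum_i\fl{n_i/d}$ (your subtractive formula for $|\BT(b,\bn)|$ is equivalent to the paper's double-rooting formula~\eqref{BT_deg}). Where you genuinely diverge is the polynomiality/non-negativity lemma. The paper (Lemma~\ref{lem:14}) sets $\tilde n_1=b$, $\tilde n_{i+1}=n_i$, recognizes the candidate polynomial as the Lemma~\ref{lem:6} expression in these shifted variables, and --- because that shifted sequence violates~\eqref{tree-cond} --- redoes the cyclotomic-exponent analysis using~\eqref{tree-cond-bud} before recycling the reciprocal-unimodal argument. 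You instead propose a root-position decomposition built on the completed sequence $\tilde\bn=(n_1+b,n_2,\dots)$. Your conjectured identity is in fact correct, with $a=0$ and $a'=2n+b-n_1$: after cancelling $q$-factorials, $\qbinom{n_1+b}{b}_qP_i(\tilde\bn)$ and $\qbinom{n_1+b-1}{b}_qP_l(\tilde\bn)$ contribute $[2n+b-n_1]_q$ and $[n_1]_q$ respectively against the common factor $\qbinom{n+b+1}{b,n_1,n_2,\dots}_q\big/\bigl([n+b]_q[n+b+1]_q\bigr)$, and $[2n+b-n_1]_q+q^{2n+b-n_1}[n_1]_q=[2n+b]_q$. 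The key point you did not make explicit, but which makes your route work off the shelf, is that $\tilde\bn$ \emph{does} satisfy~\eqref{tree-cond} (since $\sum_i(i-2)\tilde n_i=-b+\sum_i(i-2)n_i=-2$ by~\eqref{tree-cond-bud}), so Lemmas~\ref{lem:6} and~\ref{lem:11} apply directly and no new fractional-part analysis is needed. Your approach thus buys a cleaner non-negativity proof at the price of one (easy) $q$-identity that you left unverified; the paper's buys uniformity with its earlier lemmas at the price of repeating the exponent computation under the modified degree condition.
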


\begin{proof}
Polynomiality and non-negativity of coefficients of the cyclic
sieving polynomial are proved in Lemma~\ref{lem:14} below.

\medskip
Let $\om$ be a primitive $m$-th root of unity.
We have to prove 
\begin{equation} \label{eq:20}
\Fix_{R^e}(\BT(b,\bn))=
 \frac {[2n+b]_q} {[n+b+1]_q\,[n+b]_q}
\begin{bmatrix}
    n+b+1\\b,n_1,n_2,\dots\end{bmatrix}_q
\Bigg\vert_{q=\om^ e}
\end{equation}
for all integers $e$. It suffices to prove this for values of $e\in \llbracket 0, m-1\rrbracket$ equal to $0$ or dividing $m$. Lemma~\ref{lem:15} below gives the corresponding values of the right-hand side of~\eqref{eq:20}.

\medskip
We begin with $e=0$. To construct a b-tree of $\BT(b,\bn)$, we start from a rooted tree of $\T(b+n_1, n_2, n_3, \ldots)$. We choose $b$ of its leaves, which we convert into buds. This yields a tree with buds, rooted either at a corner (and there are $2n+b$ of them) or at one of the $b$  buds. A double rooting argument then gives
\begin{align}
 \label{BT_deg}
  |\BT(b, \bn)|&=\frac{2n+b}{2n+2b}\binom{b+n_1}{b}|\T(b+n_1, n_2,  \ldots)| \nonumber
                 \\
&  =\frac{2n+b}{(n+b)(n+b+1)}\binom{b+n+1}{b,n_1, n_2, \ldots}
\end{align}
by~\eqref{eq:T(n,d)}. This proves the case $e=0$ of~\eqref{eq:20}.

\medskip
Next we assume that $e>0$, and we refine the argument of the proof of Proposition~\ref{prop:BTij},
taking into account the number of vertices of each degree. We write as usual $d:=m/e=(2n+b)/e$.

If $d=2$ and the number $n+1$ of vertices is even, then we need to count b-trees with $1+\frac b2$ buds, $n_i/2$ nodes of degree~$i$ for $i\ge 1$, and a marked bud. In particular, $b$ must be even. This gives
\[
  \Fix_{R^e} (\BT(b, \bn))= \left(1+ \frac b 2\right) \left|\BT \left(1+ \frac b 2, \frac \bn 2\right)\right|
  = \frac{2n+b}{n+b+1} \binom{\frac{b+n+1}2}{\frac b 2, \frac {n_1} 2, \ldots}
\]
by~\eqref{BT_deg}. This coincides with the expression in the second alternative of~\eqref{eq:19}. This proves~\eqref{eq:20} in this case.

\medskip
Now we assume that $d=m/e$ divides $n$.
Then the b-trees of $\BT(b,\bn)$ fixed by~$R^e$ have a central node, of degree~$\ell$ divisible by~$d$. Moreover, $\ell$~must be the (necessarily unique) integer such that $n_\ell\equiv 1$ (mod $d$) while $d$ divides~$n_i$ for $i\neq \ell$.
By extending the map $\Psi_d$ to b-trees, we obtain
a bijection between b-trees of $\BT(b,\bn)$ fixed by~$R^e$
and
b-trees having 
\begin{itemize}
  \item $\frac bd$ buds,
\item  $\bar n_i:=\frac {n_i} d$ nodes of degree~$i$ for $i\not \in \{\ell/d, \ell\}$,
\item $\bar n_\ell:=\frac{n_\ell-1}d$ nodes of degree~$\ell$,
\item $\bar n_{\ell/d}:= \frac {n_{\ell/d}}d+1$ nodes of degree~$\ell/d$, one of which is marked.
\end{itemize}
This gives
\begin{align*}
  \Fix_{R^e} (\BT(b, \bn))&=\bar n_{\ell/d} \left|\BT\left( \frac b d, \bar n_1, \bar n_2, \ldots\right)\right|\\
  &=\frac{2n+b}{n+b} \binom{\frac{n+b}d}{\frac b d, \frac{n_1} d, \ldots, \frac{n_{\ell-1}}d, \frac{n_\ell-1}d, \frac{n_{\ell+1}}d, \ldots},
\end{align*}
by~\eqref{BT_deg}. This coincides with the expression in the third alternative of~\eqref{eq:19}, and concludes the proof of the proposition.
\end{proof}

\begin{lemma}\label{lem:14}
Let $b\ge 0$ and $\bn=(n_1, n_2, \ldots)$ satisfy~\eqref{tree-cond-bud}. Let $n+1:=\sum_i n_i$.  Then the expression
  \beq\label{eq:Pol5}
  \frac {[2n+b]_q} {[n+b+1]_q\,[n+b]_q}
\begin{bmatrix}
  n+b+1\\b,n_1,n_2,\dots\end{bmatrix}_q
\eeq
is a polynomial in $q$ with non-negative coefficients.
\end{lemma}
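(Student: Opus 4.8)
The plan is to follow the same two-stage template as the earlier polynomiality lemmas (Lemmas~\ref{lem:6P}, \ref{lem:10}, \ref{lem:6}): first show that~\eqref{eq:Pol5} is a polynomial by bounding the multiplicity of each cyclotomic factor, and then prove that its coefficients are non-negative. For the first stage I would simplify
\[
\frac{[2n+b]_q}{[n+b+1]_q\,[n+b]_q}\begin{bmatrix}n+b+1\\b,n_1,n_2,\dots\end{bmatrix}_q=\frac{[2n+b]_q\,[n+b-1]_q!}{[b]_q!\,\prod_{i\ge1}[n_i]_q!}=\prod_{d\ge2}\Phi_d(q)^{h_d},
\]
and, writing $B=\{b/d\}$, $N_i=\{n_i/d\}$ and using $n+b-1=b-2+\sum_i n_i$, I expect the exponent to collapse to
\[
h_d=\chi(d\mid 2n+b)+\fl{B+\sum_{i\ge1}N_i-\tfrac2d}.
\]
The floor is at least $-1$, and equals $-1$ precisely when $B+\sum_i N_i\in\{0,1/d\}$. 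Here I would invoke~\eqref{tree-cond-bud} in the equivalent forms $2n+b=\sum_k k\,n_k$ and $b-2=\sum_k(k-2)n_k$: if all of $b,n_1,n_2,\dots$ are divisible by $d$ then $d\mid\sum_k kn_k=2n+b$; and if exactly one of them is $\equiv1\pmod d$, reducing~\eqref{tree-cond-bud} modulo~$d$ rules out the case $b\equiv1$ and forces $d\mid\ell$ (hence again $d\mid 2n+b$) in the case $n_\ell\equiv1$. In each dangerous case $\chi(d\mid 2n+b)=1$ compensates, so $h_d\ge0$, which proves polynomiality; this analysis also isolates exactly the cases $h_d=0$, which are the non-vanishing cases relevant to Lemma~\ref{lem:15}.

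For the second stage, the multiplier trick that works for $\T(\bn)$ in Lemma~\ref{lem:10} breaks down here: there it simplified to $\frac{1+q^{n}}{1-q^{n+1}}$ because $2n=2\cdot n$, whereas the analogous factor $\frac{1-q^{2n+b}}{(1-q^{n+b+1})(1-q^{n+b})}$ may have negative power-series coefficients once $b>0$. The resolution, in the spirit of the decomposition~\eqref{eq:decomp2} of Lemma~\ref{lem:6}, is to split the numerator via $[2n+b]_q=[b]_q+q^b[2n]_q$, which I expect to yield the identity (for $b\ge1$; the case $b=0$ is Lemma~\ref{lem:10})
\[
\frac{[2n+b]_q}{[n+b+1]_q[n+b]_q}\begin{bmatrix}n+b+1\\b,n_1,\dots\end{bmatrix}_q=\frac{1}{[n+b]_q}\begin{bmatrix}n+b\\b-1,n_1,\dots\end{bmatrix}_q+q^b\,\frac{1+q^{n}}{[n+1]_q}\begin{bmatrix}n+b-1\\b\end{bmatrix}_q\begin{bmatrix}n+1\\n_1,\dots\end{bmatrix}_q.
\]

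I would then treat the two summands separately, each by the reciprocal–unimodal argument of Lemma~\ref{lem:6P}. The first summand is $\frac{1-q}{1-q^{n+b}}$ times the reciprocal and unimodal $q$-multinomial $\begin{bmatrix}n+b\\b-1,n_1,\dots\end{bmatrix}_q$; since $\frac{1}{1-q^{n+b}}$ has non-negative coefficients and the summand is a reciprocal polynomial (its polynomiality being a special case of the computation above), its first half of coefficients is non-negative, and reciprocity delivers the rest. The second summand is $q^b$ times $\frac{1+q^{n}}{1-q^{n+1}}$ times $(1-q)$ times the reciprocal and unimodal product $\begin{bmatrix}n+b-1\\b\end{bmatrix}_q\begin{bmatrix}n+1\\n_1,\dots\end{bmatrix}_q$, and here $\frac{1+q^{n}}{1-q^{n+1}}$ again has non-negative coefficients, so the identical argument applies. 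The main point to get right is thus the decomposition itself: verifying the displayed identity and confirming that each summand is genuinely reciprocal of the expected degree, so that the ``first half non-negative $\Rightarrow$ all non-negative'' step is legitimate. Once the split is in hand both pieces fall to the established template, and the polynomiality computation is routine modulo~\eqref{tree-cond-bud}.
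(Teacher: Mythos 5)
Your proposal is correct and follows essentially the same route as the paper: the polynomiality step is the identical cyclotomic-exponent computation (the paper's exponent $j'_d$ is your $h_d$, analysed via the same two dangerous cases of~\eqref{tree-cond-bud}), and for non-negativity the paper simply relabels $(b,n_1,n_2,\dots)$ as a degree sequence $(\tilde n_1,\tilde n_2,\dots)=(b,n_1,n_2,\dots)$ so that~\eqref{eq:Pol5} becomes the expression of Lemma~\ref{lem:6}, whose decomposition~\eqref{eq:decomp2} is, under that substitution, exactly your identity coming from $[2n+b]_q=[b]_q+q^b[2n]_q$. The only difference is presentational: you write out the decomposition and the reciprocal--unimodal argument explicitly, whereas the paper imports them from Lemma~\ref{lem:6} after observing that its non-negativity argument never uses~\eqref{tree-cond}.
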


\begin{proof}
  Let us introduce numbers $\tilde n_i$ defined by
  \[
    \tilde n_1=b \quad \text{and} \quad  \tilde n_{i+1}=n_i \text{ for } i\ge 1.
  \]
  Define $\tilde n=-1 + \sum _i \tilde n_i$.  Then
  \[
    n= -1+\sum n_i = -1-b +\sum \tilde n_i= \tilde n-\tilde n_1.
  \]
  This allows us to rewrite~\eqref{eq:Pol5} as
  \[
    \frac{[2\tilde n-\tilde n_1]_q}{[\tilde n +1]_q [\tilde n]_q}
    \qbinom{\tilde n +1}{\tilde n_1, \tilde n_2, \ldots}.
  \]
  We recognise the expression of Lemma~\ref{lem:6}, with $n_i$ replaced by $\tilde n_i$. However, we cannot apply directly Lemma~\ref{lem:6}, because the $\tilde n_i$'s do not satisfy~\eqref{tree-cond}. Indeed, in
view of~\eqref{tree-cond-bud},
  \[
    \sum_{i \ge 1} (i-2) \tilde n_i= -b +\sum_{i\ge 1} (i-1) n_i=
    -b +\sum_{i\ge 1} (i-2) n_i +\sum_{i\ge 1} n_i=-2 + (n+1)\neq -2.
  \]
  However, we can reproduce the first steps of the proof of Lemma~\ref{lem:6}, as long as we do not use~\eqref{tree-cond}. This leads us to the following point:  the expression~\eqref{eq:Pol5} is a polynomial if and only if for all $d\ge 2$, the following quantity is non-negative:
\[ 
    k'_d:= \chi\left(d \mid (2n+b)\right) + \fl{B + \sum_{i\ge 1} N_i -\frac 2 d},
\] 
  where we have used the fractional parts $B=\{b/d\}$ and $N_i=\{n_i/d\}$. This is the counterpart of~\eqref{eq:k_d}. The above quantity is non-negative except possibly if $B+\sum N_i$ equals $0$ or $\frac 1 d$. If this holds, the final term above equals $-1$.

  If $B+\sum N_i=0$, that is, if $d$ divides $b$ and each $n_i$, then it follows from~\eqref{tree-cond-bud} that $d=2$, so that
$d\mid (2n+b)$ and $k'_d=0$.

If $B+\sum N_i=\frac 1 d$, then either $B=\frac 1 d$ and 
all $N_i$'s 
vanish, or $N_\ell=\frac 1 d$ for some $\ell$, while $B=N_i=0$ for $i\neq \ell$. In the first case, $d$ divides~$b-1$ and all~$n_i$'s, and it follows from~\eqref{tree-cond-bud} that $d=1$, which is impossible. In the second case, $d$ divides~$b$, $n_\ell-1$ and all~$n_i$'s for $i\neq \ell$. Since $n=-1+\sum n_i$, we conclude that $d$ divides~$n$, and thus $2n+b$. Hence $k'_d$ vanishes in this case.

  This proves the polynomiality part of the lemma. Now we observe that the non-negativity part in the proof of Lemma~\ref{lem:6} does not use any assumption
on the $n_i$'s, and conclude that all coefficients of~\eqref{eq:Pol5} are non-negative.  
\end{proof}

\begin{lemma} \label{lem:15}
Let $b\ge 0$ and $\bn=(n_1, n_2, \ldots)$ satisfy~\eqref{tree-cond-bud}. Let $n+1:=\sum_i n_i$. Write $m:=2n+b$.
Furthermore,
let $\om$ be a primitive $m$-th root of unity, and let $e$ be a
non-negative integer $<m$ being equal to~$0$ or dividing~$m$. In this case, write $d:=m/e$.
Then we have
\begin{multline} \label{eq:19} 
 \frac {[2n+b]_q} {[n+b+1]_q\,[n+b]_q}
\begin{bmatrix}
    n+b+1\\b,n_1,n_2,\dots\end{bmatrix}_q
\Bigg\vert_{q=\om^e}\\
=
\begin{cases}
  \frac{2n+b}{(n+b)(n+b+1)}\binom{b+n+1}{b,n_1, n_2, \ldots},&
  \text{if }e=0,
  \\[.5em]
  \frac{2n+b}{n+b+1}
  \left( \begin{smallmatrix}
    \frac{b+n+1}2\\
  \frac b 2, \frac {n_1} 2, \ldots
  \end{smallmatrix}\right),
  &\text{if $d=2$ with $b$ and each $n_i$ even},\\[.5em]
  \frac{2n+b}{n+b}
  \left(\begin{smallmatrix}
    {\frac{n+b}d}\\
    {\frac b d, \frac{n_1} d, \ldots, \frac{n_{\ell-1}}d, \frac{n_\ell-1}d, \frac{n_{\ell+1}}d, \ldots}  
  \end{smallmatrix}\right),
  &\text{if $d \mid b$,
$d\mid (n_\ell-1)$
for some $\ell$} \\[-.5em]
  &\text{and $d\mid n_i$ for $i\neq \ell$,}
  \\[.3em]
  0, &\text{otherwise.}
\end{cases}
\end{multline}
\end{lemma}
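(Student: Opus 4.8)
The plan is to follow the now-familiar three-step template of Lemmas~\ref{lem:1}, \ref{lem:7} and~\ref{lem:9}: dispose of $e=0$, locate the divisors $d$ for which the specialisation is nonzero by counting multiplicities of roots of unity, and finally evaluate in the surviving cases via~\eqref{eq:SM}. For $e=0$ the value at $q=1$ is the cardinality $|\BT(b,\bn)|$ computed in~\eqref{BT_deg}, which is exactly the first alternative of~\eqref{eq:19}. I would then immediately reduce to the case where $e$ is a positive divisor of $m=2n+b$, so that $\rho:=\om^e$ is a primitive $d$-th root of unity with $d=m/e\ge 2$.

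For the root-counting step I would recycle the substitution of Lemma~\ref{lem:14}, writing $\tilde n_1=b$ and $\tilde n_{i+1}=n_i$, so that the left-hand side of~\eqref{eq:19} is the expression~\eqref{eq:Pol5} whose cyclotomic factorisation was already determined there. Since $d\mid m$, the indicator $\chi(d\mid(2n+b))$ appearing in~\eqref{j-prime} equals $1$, whence the multiplicity of $\rho$ as a root of~\eqref{eq:Pol5} is
\[
j'_d = 1+\fl{B+\sum_{i\ge1}N_i-\tfrac 2 d},
\]
with fractional parts $B=\{b/d\}$ and $N_i=\{n_i/d\}$. The specialisation is nonzero precisely when $j'_d=0$, which, exactly as in~\eqref{eq:Di2}, happens if and only if $B+\sum_i N_i\in\{0,1/d\}$ (each summand being a nonnegative multiple of $1/d$). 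This already yields the fourth (vanishing) alternative of~\eqref{eq:19} and splits the nonzero cases into two arithmetic subcases.

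The decisive step, and the only place where the extra bud parameter $b$ does genuine work, is translating these two conditions through~\eqref{tree-cond-bud}. If $B+\sum_i N_i=0$ then $d$ divides $b$ and every $n_i$, and reducing~\eqref{tree-cond-bud} modulo $d$ forces $d\mid 2$, hence $d=2$: this is the second alternative. If $B+\sum_i N_i=1/d$ there are a priori two possibilities, namely $B=1/d$ with all $N_i=0$, or $N_\ell=1/d$ for a single $\ell$ with $B=0$ and $N_i=0$ otherwise; the first is spurious, since $d\mid(b-1)$ together with $d\mid n_i$ for all $i$ makes~\eqref{tree-cond-bud} read $1\equiv 2\pmod d$, impossible for $d\ge 2$ (the analogue of the exclusion of $\ell=1$ in Lemma~\ref{lem:7}). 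The surviving possibility gives exactly the divisibility pattern of the third alternative, and a further reduction of~\eqref{tree-cond-bud} modulo $d$ shows $d\mid\ell$. In each surviving case the value is then read off by grouping the $q$-integers of numerator and denominator into ratios differing by a multiple of $d$ and applying~\eqref{eq:SM}, just as in~\eqref{grouped-2} and~\eqref{grouped-d}; I expect this last bookkeeping to be routine, so the main (mild) obstacle is the careful modular analysis via~\eqref{tree-cond-bud} separating the three nonzero regimes.
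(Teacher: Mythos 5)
Your proposal is correct and follows essentially the same route as the paper's proof: the same reduction to $B+\sum_i N_i\in\{0,1/d\}$ via the multiplicity count (the paper recomputes the floor sum directly rather than citing $j'_d$ from Lemma~\ref{lem:14}, but the quantity is identical), the same elimination of the spurious subcase $B=1/d$, and the same final evaluation via~\eqref{eq:SM}. The only cosmetic difference is that you reduce~\eqref{tree-cond-bud} modulo $d$ where the paper manipulates $2n+b$ arithmetically; both yield the same conclusions.
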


\begin{proof}
The case where $e=0$ is trivial. Therefore we assume from now on that $e$
is a positive divisor of~$m$ with $e<m$. Then $\om^e$ is a primitive $d$-th root of unity. According to~\eqref{qint0}, in order to obtain a
non-zero value of the specialisation on the left-hand side
of~\eqref{eq:19}, there must be  an equal number of factors of the form
$1-q^\al$ with $d \mid \al$ in the numerator and in the
denominator. In other words, we must have
\[
  1+ \fl{\frac{n+b-1}d} - \fl{\frac b d} -\sum_i \fl{\frac {n_i}d}=0.
\]
Introducing the fractional parts $B:=\{ b/d\}$ and $N_i=\{n_i/d\}$, 
this may be rewritten as
\[
 1+ \fl{B + \sum N_i-\frac  2 d} =0.
\]
This forces $B+\sum N_i$ to be contained in $\{0, \frac 1 d\}$.

If $B+\sum N_i =0$, that is, $B=N_i=0$ for all $i$, then $d$ divides~$b$ and each $n_i$.
However, by assumption $d$ divides~$2n+b$, hence $d$ divides $2n= 2\sum n_i -2$. This forces $d=2$, and we are in the second case of~\eqref{eq:19}.

If $B+\sum N_i=1/d$ with $B=1/d$ and $N_i=0$ for all $i$, then $d$ divides~$b-1$ and each $n_i$, so it divides $n+1=\sum n_i$ as well.
However, by assumption $d$ divides $2n+b=2(n+1)+(b-1)-1$, which means that $d=1$, which is impossible.

Finally, if $B+\sum N_i=1/d$ with $N_\ell=1/d$, $B=0$ and $N_i=0$ for $i\neq \ell$, then $d$ divides $b$, $n_\ell-1$ and each $n_i$ for $i\neq \ell$. Hence, we are in the third case of~\eqref{eq:19}.

It is then an easy task to evaluate the left-hand side of~\eqref{eq:19} in the second and third case, using the simple fact~\eqref{eq:SM}.
\end{proof}

\subsection{Two reformulations}
\subsubsection{Cubic maps with a Hamiltonian cycle}

There exists a simple bijection between tree-rooted maps
and \emm cubic, maps
(maps in which all vertices have degree~$3$)
equipped with a \emm Hamiltonian cycle, containing the root edge.
(By definition, a Hamiltonian cycle visits exactly once each vertex.) This bijection is illustrated in Figure~\ref{fig:hamilton} and works as follows: starting from a tree-rooted map $(\m,\tau)$, say with $i+1$ vertices and $j+1$ faces, and drawn with a circle surrounding $\tau$ as we did in Figure~\ref{fig:tree-rooted} (right), one  forms a non-crossing matching dual to the $i$ edges of~$\tau$, using the classical construction of Figure~\ref{fig:NCM-tree}.  This matching is shown in dashed red lines in Figure~\ref{fig:hamilton}. Note that its edges must intersect only one edge of~$\m$ (and this edge is in $\tau$). Now one has $2i+2j=2n$ points on the surrounding circle. The cubic map then consists of these~$2n$~vertices, together with the edges of the surrounding circle (which forms the distinguished Hamiltonian cycle), plus the $i$ edges of the matching, and finally the $j$ edges that lie outside the circle. The root edge of the cubic map is chosen to be the unique edge of the cycle that faces the root corner of~$\m$, oriented in counterclockwise direction.

  \begin{figure}[htb]
    \centering
    \includegraphics[width=130mm]{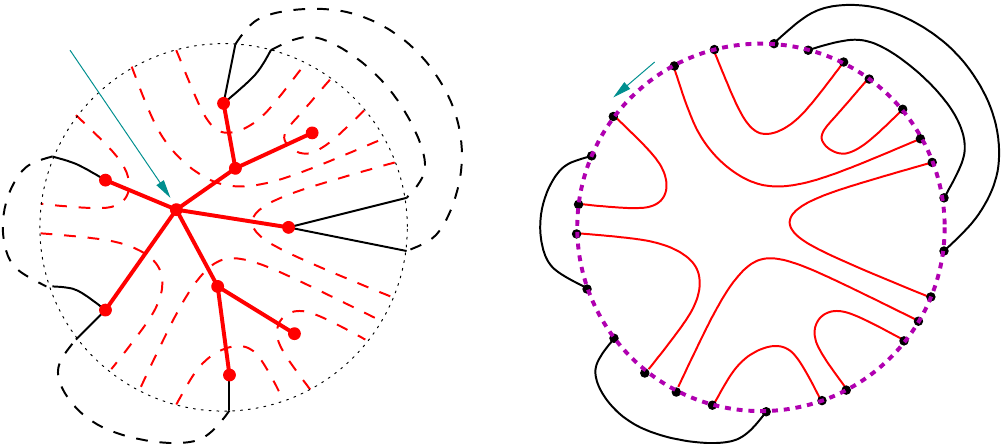}
    \caption{A tree-rooted map with $i+1=10$ vertices and $j+1=5$ faces (left), and the corresponding cubic map with a distinguished Hamiltonian cycle (containing the root edge). The rotation~$R$ on the tree-rooted map moves the root edge of the cubic map along the distinguished cycle.}
    \label{fig:hamilton}
  \end{figure}

  Note that a tree-rooted map with $i+1$ vertices, $j+1$ faces, $n$ edges, $n_k$ nodes of degree~$k$ is sent
to a cubic map (with a distinguished Hamiltonian cycle) having~$2n$~vertices, $i$~edges inside the cycle, $j$~edges outside the cycle, and $n_k$~\emm inner, faces (those lying inside the distinguished cycle) that contain
$k$~edges of the cycle.

It is not hard to see that the rotation~$R$ on tree-rooted maps just moves the root-edge of the cubic map one step in counterclockwise direction around the cycle. Our three cyclic sieving theorems then translate naturally
into cyclic sieving phenomena for cubic maps with a distinguished Hamiltonian cycle.

\subsubsection{Square lattice walks in the first quadrant}

Les us finally recall the classical bijection between tree-rooted maps with $n$ edges and walks with north, east, south and west steps ($N, E, S, W$) that start and end at $(0,0)$,
and never leave the non-negative quadrant $\N^2$, with $\N=\{0, 1, 2, \ldots\}$. This bijection works as follows: starting from the root corner of the tree-rooted map, and walking around the tree in counterclockwise direction, one encodes each edge of the tree by $E$ (respectively~$W$) when it is visited for the first (respectively second) time, and each edge not in the tree by $N$ (respectively~$S$) when it is visited for the first (respectively second) time, until one is back to the root corner. For instance, the encoding of the tree-rooted map of Figure~\ref{fig:hamilton} starts $ENWESNWE\ldots$ If the map has $i+1$ vertices and $j+1$ faces, then the walk has $i$ east steps and $j$ north steps.

The effect of the rotation~$R$ on walks is not hard to describe: if a tree-rooted map is encoded by the walk (or word) $w=a w_1 \bar a w_2$, where $a\in \{N, E\}$ and $\bar a \in \{S, W\}$ is the step that \emm matches, the first letter of~$w$, then applying $R$ gives the walk $w_1 a w_2 \bar a$. To be more precise: if $a=N$, then $a  w_1 \bar a$ is the shortest non-empty prefix of~$w$ that ends on the $x$-axis; if $a=E$, then $a  w_1 \bar a$ is the shortest non-empty prefix of~$w$ that ends on the $y$-axis. Theorem~\ref{thm:TMn} (respectively Theorem~\ref{thm:TMij}) thus translates into a cyclic sieving phenomenon for quadrant walks with $2n$~steps (respectively with $i$~east steps and $j$~north steps).

Note that this cyclic action on $NSEW$ quadrant walks is similar to another cyclic action defined on quadrant walks with $S$, $W$, and $N\!E$~steps, known as \emm Kreweras walks,, for which a cyclic sieving phenomenon has been conjectured; see~\cite[Conj.~6.2]{hopkins-rubey} and~\cite{GaKr}.

\medskip

\bibliographystyle{abbrv}
\bibliography{sieving.bib}

\end{document}